\documentclass[12pt,letterpaper]{article}

%%% Packages %%%
\usepackage[utf8]{inputenc}
\usepackage{amsmath}
\usepackage{amsfonts}
\usepackage{amssymb}
\usepackage{hyperref}
\usepackage[left=2cm,right=2cm,top=2cm,bottom=2cm]{geometry}
\usepackage{graphicx}
\usepackage{tikz}

%%% Bibliography %%%
\usepackage[ backend=biber, sorting= nyt, style=alphabetic, url = false, isbn = false, doi = false, minalphanames=4, maxalphanames=10, maxbibnames=10]{biblatex}
\addbibresource{twistedconvergence.bib}

%%% Operators & commands %%% 
\DeclareMathOperator{\Aut}{\text{Aut}} %aut set
\DeclareMathOperator{\res}{\text{Res}} %complex residue
\DeclareMathOperator{\Hom}{\text{Hom}} %hom set
\DeclareMathOperator{\End}{\text{End}} %end set
\DeclareMathOperator{\ad}{\text{ad}} %adjoint action
\DeclareMathOperator{\one}{\mathbf{1}} %vacuum
\DeclareMathOperator{\id}{\text{id}} %identity morphism
\DeclareMathOperator{\wt}{\text{wt}} %conformal weight
\DeclareMathOperator{\Op}{\mathcal{O}} %operator for contragredient modules
\DeclareMathOperator{\Span}{\text{Span}} %vector span
\DeclareMathOperator{\degree}{\text{deg}}
\newcommand{\iu}{\boldsymbol{i}} %imaginary unit
\newcommand{\lvl}{\kappa} %level of wzw model
\newcommand{\el}{\ell} %l/ell for index (can change gloabally)
\newcommand{\vect}{\boldsymbol} %bold symbol for vectors
\newcommand{\type}[3]{\left( \begin{smallmatrix} #3\\ #1 \,  #2 \end{smallmatrix}\right)} %intertwining operator type
\newcommand{\gtype}{$\left(\!\begin{smallmatrix} g\\ 1 \  g \end{smallmatrix}\!\right)$-type} %(1gg)-type
\newcommand{\D}{\mathbb{D}} %disc in complex plane
\newcommand{\N}{N} %number of intertwining operators

%%% Theorem evironments %%%
\usepackage{amsthm}
\newtheorem{theorem}{Theorem}[section]
\newtheorem{proposition}[theorem]{Proposition}
\newtheorem{corollary}[theorem]{Corollary}
\newtheorem{lemma}[theorem]{Lemma}

\theoremstyle{definition}
\newtheorem{definition}[theorem]{Definition}
\newtheorem{remark}[theorem]{Remark}

%%% TikZ %%%
\usetikzlibrary{decorations.markings,arrows}

\pgfdeclarelayer{edgelayer}
\pgfdeclarelayer{nodelayer}
\pgfsetlayers{edgelayer,nodelayer,main}
% Node styles
\tikzstyle{none}=[]
% Edge styles
\tikzstyle{plainedge}=[-, draw=black=]
\tikzstyle{dotted line}=[dashed, draw=black=]

%%%%

\author{Daniel Tan}
\date{}
\title{\bf Differential equations for intertwining operators among untwisted and twisted modules}

\numberwithin{equation}{section}

\begin{document}

\maketitle

\begin{abstract}
Given any vertex operator algebra $ V $ with an automorphism $ g $, we derive a Jacobi identity for an intertwining operator $ \mathcal{Y} $ of type $ \type{W_1}{W_2}{W_3} $ when $ W_1 $ is an untwisted $ V $-module, and $ W_2 $ and $ W_3 $ are $ g $-twisted $ V $-modules. We say such an intertwining operator is of \gtype. Using the Jacobi identity, we obtain homogeneous linear differential equations satisfied by the multi-series $ \langle w_0, \mathcal{Y}_1(w_1,z_1) \cdots \mathcal{Y}_\N(w_\N,z_\N) w_{\N+1} \rangle $ when $ \mathcal{Y}_j $ are of \gtype\ and the modules are $ C_1 $-cofinite and discretely graded. In the special case that $ V $ is an affine vertex operator algebra, we derive the ``twisted KZ equations" and show that its solutions have regular singularities at certain prescribed points when $ g $ has finite order. When $ V $ is general and $ g $ has finite order, we use the theory of regular singular points to prove that the multi-series $ \langle w_0, \mathcal{Y}_1(w_1,z_1) \cdots \mathcal{Y}_\N(w_\N,z_\N) w_{\N+1} \rangle $ converges absolutely to a multivalued analytic function when $ |z_1| > \cdots > |z_\N| > 0 $ and analytically extends to the region $ z_i, z_i - z_j \neq 0 $. Furthermore, when $ \N = 2 $, we show that these multivalued functions have regular singularities at certain prescribed points.
\end{abstract}

\setcounter{section}{-1}

\section{Introduction}
The representation theory of vertex operator algebras has made major progress in mathematically constructing two-dimensional conformal field theories (CFTs). One interesting feature of vertex operator algebra representation theory is the concept of ``twisting" a representation by an automorphism. The first-discovered elements of this concept within mathematics include the twisted vertex operators in \cite{LepowskyWilson,FLM_E8, LepowskyCalculus}, and the construction of the Moonshine Module in \cite{FLM84} by Frenkel, Lepowsky and Meurman. In \cite{FLM}, a vertex operator algebra structure was constructed on the Moonshine Module. Their construction was interpreted by physicists to describe a string propagating on the quotient of the torus $ \mathbb{R}^{24}/\Lambda $ (given by the Leech lattice $ \Lambda $) by the action of negation. This came to be understood as the first example of a process in physics known as \textit{orbifolding}, where a known CFT is ``quotiented" by a finite group of its symmetries.

Inspired by the Moonshine Module construction, physicists Dixon, Harvey, Vafa and Witten initiated a general study of orbifold CFT in \cite{DHVW1, DHVW2} on a physical level of rigor. Using a finite group  $ G $ of symmetries of the target manifold $ \mathcal{M} $ in a string theory, they proposed a construction that describes the physics of a string propagating in the orbifold $ \mathcal{M}/ G $; hence the term ``orbifold". However, in the current study and elsewhere in vertex operator algebra theory, the group consists of abstract automorphisms of the CFT, and does not need to come from any manifold.

As discussed in \cite{DHVW1, DHVW2}, the orbifolding procedure has two parts. First, \textit{twisted sectors} $ \mathcal{H}_g $ are included in the state space for each $ g \in G $. These sectors are characterized by the property that the fields of the original theory are twisted by the action of $ g $ when they circle an insertion of a state in $ \mathcal{H}_g $. By circling other twisted sectors, each twisted sector $ \mathcal{H}_g $ gains an action of the  centralizer $ C_G(g) $. And second, there is a \textit{projection} of each $ \mathcal{H}_g $ onto the $ C_G(g) $-invariant states. In this paper, we focus on just the first step. More specifically, we study how the untwisted sector $ \mathcal{H}_1 $ acts on a $ g $-twisted sector $ \mathcal{H}_g $.
 
The chiral properties of orbifold CFTs were studied by Dijkgraaf, Vafa, E. Verlinde and H. Verlinde in \cite{DVVV}. They explained, on a physical level of rigor, how orbifold CFTs can be constructed from the twisted representations of the chiral algebra (i.e., twisted modules for a vertex operator algebra) of the original CFT. The fields and their correlation functions are \textit{assumed} to satisfy the usual convergence, radial ordering, and operator product expansion properties.

The chiral study is closest in spirit to the vertex-operator-algebra-theoretic study of orbifold CFT. If we are to rigorously construct orbifold CFTs using the representation theory of vertex operator algebras, we must prove that the convergence, radial ordering, and operator product expansion properties of the correlation functions follow from certain natural conditions for vertex operator algebras and their modules that are relatively easy to verify.  \\

The mathematical study of chiral orbifold CFT is outlined as follows. Given an automorphism $ g $ of a vertex operator algebra $ V $, a \textit{$ g $-twisted $ V $-module} is a vector space equipped with an action of $ V $, similar to that of a $ V $-module action, but twisted by $ g $ as the vertex operator formally circles the origin, that is $ Y(u,x) = e^{2 \pi \iu x \frac{d}{dx}} Y(gu,x) $. Examples of twisted vertex operators were first discovered in mathematics in \cite{LepowskyWilson} when constructing the affine Lie algebra $ A_1^{(1)} $, and more generally in \cite{FLM_E8, LepowskyCalculus}. Twisted modules for vertex operator algebras were essentially introduced in \cite{FLM}. They were formally axiomatized in \cite{FFR}, and in \cite{DongTwisted} for all automorphisms of finite order, by compiling the basic properties discovered and proved in \cite{FLM} (see \cite{FLM_VOCalc} and \cite{Lepowsky_VOandMonster}), notably the ``twisted Jacobi identity". The notion of twisted module introduced in \cite{Gentwistmod} allows for general automorphisms, but the algebraic Jacobi identity is replaced with the analytic axiom of ``duality". 

Given a group $ G $ of automorphisms of $ V $, the subspace $ V^G $ of elements fixed under the action of $ G $ is a subalgebra of $ V $. One can extend $ V^G $ by first adjoining $ g $-twisted $ V $-modules to $ V $, for each $ g \in G $, and intertwining operators between them before taking the fixed-point subalgebra. 

The Moonshine Module $ V^\natural $ was the first example of a vertex operator algebra obtained by, what was later interpreted as, an orbifold construction. In \cite{FLM}, Frenkel, Lepowsky and Meurman constructed $ V^\natural $ starting from the Leech lattice vertex operator algebra $ V_\Lambda $ and the (2-element) group generated by the involution induced by negation of the Leech lattice. The physical interpretation of the mathematical theory presented in \cite{FLM} as an example of the orbifold procedure was explained in \cite{DGHBeauty}, and explained conceptually with mathematical rigor in \cite{HuangNonmeromorphic}. In \cite{EMS}, a general construction of orbifold CFTs was achieved for simple, rational, $ C_2 $-cofinite, holomorphic vertex operator algebra of CFT-type (i.e., positive energy) with $ G = \langle g \rangle $ finite cyclic by use of \cite{Miyamoto,CM}.

A key part in a general study of the orbifolding procedure is the study of intertwining operators $ \mathcal{Y}( \cdot, x) \cdot \colon W_1 \otimes W_2 \to W_3\{x\}[\log x] $ among twisted $ V $-modules $ W_1 $, $ W_2 $ and $ W_3 $ as introduced in the most general form in \cite{HUANG2018346, DuHuang}. These \textit{twisted intertwining operators} are believed to give an intertwining-operator-algebraic structure to the direct sum of twisted modules, which provides the vertex-operator-algebraic structure when restricted to the fixed-point subspace. Twisted intertwining operators can be thought of as the mathematical counterparts to the chiral factors of the fields (or chiral vertex operators) corresponding to states in the twisted sectors acting on other twisted sectors.

It is not only necessary to define twisted intertwining operators, but one must also prove that the series $ \langle w_0, \mathcal{Y}_1(w_1,x_1) \cdots \mathcal{Y}_\N(w_\N,x_\N)w_{\N+1} \rangle|_{x_1 = z_1, \dots, x_\N = z_\N} $ converges to a multivalued analytic function in the domain $ |z_1| > \cdots > |z_\N| > 0 $ and analytically continues the entire region $ z_i, z_i - z_j \neq 0 $, $ i \neq j $. We refer to this as the \textit{convergence property for products of $ \N $ twisted intertwining operators}.

Furthermore, one must also prove that $ \langle w_0, \mathcal{Y}_1(w_1,x_1) \mathcal{Y}_2(w_2,x_2)w_{3} \rangle|_{x_1 = z_1, x_2 = z_2} $ analytically continues to $ \langle w_0, \mathcal{Y}_3(\mathcal{Y}_4(w_1,x_0)w_2,x_2)w_3 \rangle|_{x_0 = z_1 - z_2, x_2 = z_2} $ and $ \langle w_0, \mathcal{Y}_5(w_2,x_2)\mathcal{Y}_6(w_1,x_1)w_3 \rangle|_{x_1 = z_1, x_2 = z_2} $ in the domains $ | z_2 | > |z_1 - z_2| > 0 $ and $ |z_2| > |z_1| > 0 $, respectively, for some twisted intertwining operators $ \mathcal{Y}_3, \mathcal{Y}_4, \mathcal{Y}_5 , \mathcal{Y}_6 $. This condition is known as \textit{associativity} and \textit{commutativity} for twisted intertwining operators. Physically speaking, it says that the chiral factors of correlation functions respect radial ordering, and fields have operator product expansions.

One of the main requirements to prove associativity and commutativity is to prove that the analytic continuation of the product of two twisted intertwining operators has ``regular singularities" at certain points. This also plays an important role in constructing a $ G $-crossed tensor category structure on the category of the twisted $ V $-modules, as explained in \cite{Reptheoryandorbifoldcft}. Similarly to the construction of the braided tensor category structure on the category of $ V $-modules in the eight-part series \cite{HLZ1,HLZ2,HLZ3,HLZ4,HLZ5,HLZ6,HLZ7,HLZ8}, the convergence property together with regular singularities at certain points are used to construct the associativity natural isomorphism and verify its properties.

Little is known about the convergence property for general $ V $ and $ G $, with the present understanding explained in \cite{Reptheoryandorbifoldcft}. If $ G $ is trivial, the most general results are proved by Huang (originally in \cite{HuangDEs} and extended in \cite{HLZ7}) by deriving certain differential equations when the modules are $ C_1 $-cofinite and discretely graded. This includes the special case that $ V $ is $ C_2 $-cofinite. In \cite{Miyamoto}, Miyamoto proved that $ V^G $ is $ C_2 $-cofinite when $ V $ is $ C_2 $-cofinite, CFT-type (i.e., positive energy) and simple, and $ G $ is finite and solvable. Every $ g $-twisted $ V $-module, with $ g \in G $, restricts to a $ V^G $-module. And in this context, twisted intertwining operators among $ V $-modules twisted by elements in $ G $ become intertwining operators among $ V^G $-modules. Hence, Huang's theory \cite{HuangDEs} can be applied to the $ C_2 $-cofinite vertex operator algebra $ V^G $ to show the convergence property among $ g $-twisted $ V $-modules, for $ g \in G $. One downside to this approach relies on knowing that $ V^G $ is $ C_2 $-cofinite, which is difficult to prove and is not the most general assumption.\\

We hope to prove the convergence property directly by generalizing Huang's method \cite{HuangDEs} of differential equations with regular singularities to twisted intertwining operators. In fact, physicists have discovered explicit examples of differential equations in the twisted case in \cite{deBoer:2001nw} and \cite{TwistedBPZ}. The \emph{twisted KZ equations} derived by de Boer, Halpern and Obers in \cite{deBoer:2001nw} are explicit enough to see their regular singularities. We emphasize that their results are for a certain special case. Their derivation relies on the assumption that the chiral vertex operators in the chiral correlation function correspond to states from the untwisted sector in the presence of one twisted sector. We formulate this more precisely by saying a twisted intertwining operator $ \mathcal{Y}( \cdot, x) : W_1 \otimes W_2 \to W_3 $ is of \emph{\gtype } if $ W_1 $ is an untwisted $ V $-module, and $ W_2 $ and $ W_3 $ are $ g $-twisted $ V $-modules. That is, a twisted intertwining operator of \gtype\ is one among an untwisted and twisted module (the third module is necessarily a twisted module twisted by the same element).
 
By generalizing the method of differential equations used in \cite{HuangDEs}, we derive differential equations satisfied by the ``chiral correlation functions" $ \langle w_0, \mathcal{Y}_1(w_1,x_1) \cdots \mathcal{Y}_\N(w_\N,x_\N)w_{\N+1} \rangle $ given by the product of $ \N $ \gtype\ intertwining operators $ \mathcal{Y}_i $. These differential equations hold when the modules are $ C_1 $-cofinite and discretely graded, and when $ g $ has a finitely-generated set of eigenvalues, for which there are many natural examples. Our result does not require any $ C_2 $-cofiniteness assumption on $ V $ nor $ V^G $. In fact, we do not require any knowledge of a larger group $ G $ containing $ g $. 

When the order of $ g $ is finite, we use these differential equations and the theory of regular singular points to prove the convergence property for the product of $ \N $ \gtype\ intertwining operators. We still have homogeneous linear differential equations when $ g $ has infinite order. There are, however, logarithms in the coefficients that prevent us from using the theory of regular singular points at the singularities.

Physically speaking, the correlation functions that we study in this paper describe how the chiral CFT acts on the $ g $-twisted sector. This should not be confused with the description of the chiral algebra (i.e., vertex operator algebra) acting on the $ g $-twisted sector, which is just the notion of a $ g $-twisted $ V $-module. A chiral CFT consists of more than a vertex operator algebra in general; it also contains modules for the vertex operator algebra and intertwining operators among them satisfying certain properties. The consideration of \gtype\ intertwining operators can be avoided if $ V $ is assumed to be holomorphic (i.e., $ V $ has one irreducible module up to isomorphism, namely itself) as is usually done in the currently known mathematical orbifold constructions \cite{FLM,HuangNonmeromorphic,EMS,HohnMoller,GemundenKeller}. The present work does not require $ V $ to be holomorphic, and is therefore more general. \\

Though not proved here, we hope to extend Huang's method of differential equations to correlation functions involving twisted intertwining operators that are not of \gtype. The main difficulty in doing so comes from the multivalued $ V $-action on the twisted modules. 

Formal calculus is used to derive differential equations without assuming any convergence. The Jacobi identity is the duality axiom written in the language of formal calculus, and hence, is used in our computations. While duality for twisted intertwining operators is easy to write down \cite{HUANG2018346}, the multivaluedness occurring in the twisted modules makes it difficult to convert to a Jacobi identity. Furthermore, once a Jacobi identity is obtained for twisted intertwining operators, it is much more computationally difficult to work with than the Jacobi identity for untwisted intertwining operators. Despite this, we believe that Huang's method of differential equations can be generalized to the case where the modules are twisted by elements of a general finite group. \\

In this paper, we first set notation and conventions for a multivalued version of formal calculus. In Section~\ref{sec:Jacobi}, we derive a Jacobi identity for \gtype\ intertwining operators, which must be done for the formal calculus approach of deriving differential equations. We immediately obtain some useful relations satisfied by \gtype\ intertwining operators. In Section~\ref{sec:DEs}, we derive higher-order homogeneous linear differential equations satisfied by the correlation functions of a product of $ \N $ \gtype\ intertwining operators under certain technical assumptions. In Section~\ref{sec:twisted KZ}, we explicitly derive a first-order homogeneous linear system of partial differential equations, known as the twisted KZ equations, when $ V $ is an affine vertex operator algebra, generalizing results of \cite{deBoer:2001nw}. Then, in Section~\ref{sec:kz regularity}, we show that the solutions to the twisted KZ equations have regular singularities when the order of $ g $ is finite. In Section~\ref{sec:regular singularities}, we assume that $ V $ is general and $ g $ has finite order to prove the systems derived in Section~\ref{sec:DEs} can be chosen to have regular singularities at certain prescribed points. We use this to prove the convergence property for products of $ \N $ \gtype\ intertwining operators, and we show that the product of two \gtype\ intertwining operators has regular singularities at certain points.

\paragraph{Acknowledgments} I would like to thank Professor Yi-Zhi Huang and Professor James Lepowsky for their helpful discussions, guidance and suggestions.

\section{Notation and conventions}
\label{sec:notation}
Our base field for vector spaces will always be $ \mathbb{C} $, which will be important for the theory. The notion of vertex operator algebra and its morphisms can be found in Definition 3.1.22 of \cite{LepowskyLi}. The framework of logarithmic formal calculus that we use can be found in \cite{HLZ2}. To easily distinguish between formal variables (all formal variables commute) and complex numbers, we will use $ x, \log x $ and $ x_i, \log x_i $ for formal variables and use $ z, z_i, \zeta, \zeta_i, \xi, \xi_i, \eta, \eta_i $ for complex numbers. For a vector space  $ W $, we use $
W\{x_1, \dots, x_\N \} $ to denote the space of formal generalized power series $ \sum_{n_1, \dots, n_\N} w_{n_1, \dots, n_\N} x_1^{n_1} \cdots x_\N^{n_\N} $ with coefficients in $ W $ and powers in $ \mathbb{C} $. We will regularly consider formal series of the form
\begin{align*}
f(x_1, \dots, x_\N, \log x_1, \dots , \log x_\N ) \in \mathbb{C}\{x_1, \dots ,x_\N\} [ \log x_1, \dots , \log x_\N ],
\end{align*}
but we suppress the log variables and write $ f(x_1, \dots, x_\N) $ for brevity. We use $ \frac{\partial}{\partial x_i} $ to denote the derivation defined by
\begin{align*}
\frac{\partial}{\partial x_i} x_j^n = n x_i^{n-1} \delta_{i,j} \quad \text{and} \quad \frac{\partial}{\partial x_i} \log x_j = x_i^{-1} \delta_{i,j}.
\end{align*}
We sometimes write $ \frac{d}{dx} $ when there is only one pair of variables $ x $ and $ \log x $ involved. For any operator $ \mathcal{A} $ on a vector space $ W $, locally nilpotent operator $ \mathcal{N} $ on $ W $, and formal variables $ x, x_1, \log x $, we have the following meanings for formal logarithms and exponentials, using logarithm and exponential notation:
\begin{equation}\label{eq:formal meanings}
\begin{gathered}
\log(1 + x) = \sum_{k = 1}^\infty \frac{(-1)^{k+1}}{k} x^k , \qquad \log(x + x_1) = \log x + \log(1 + x_1/x) , \\
e^{\mathcal{A}x} = \sum_{k = 1}^\infty \frac{1}{k!} \mathcal{A}^k x^k, \qquad x^\mathcal{N} = e^{\mathcal{N} \log x}, \qquad (1 + x)^\mathcal{A} = \sum_{k = 1}^\infty \binom{\mathcal{A}}{k} x^k.
\end{gathered}
\end{equation}
These formulas remain valid when we can make a substitution of a formal series (in possibly multiple variables) that is still well defined, for example
\begin{align*}
(x + x_1)^\mathcal{N} = e^{\mathcal{N}\log(x + x_1) } = e^{\mathcal{N}(\log x + \log(1+x_1/x))} = x^\mathcal{N} e^{\mathcal{N}\log(1+x_1/x)} = x^\mathcal{N} (1+x_1/x)^\mathcal{N}.
\end{align*}

When $ x^\mathcal{S} $ is used for a semisimple operator $ \mathcal{S} $ on $ W $, it is understood to mean $ x^\mathcal{S} u = x^\alpha u $ when $ \mathcal{S} u = \alpha u $, and then extended to all of $ W $. If an operator $ \mathcal{A} $ has commuting semisimple and locally nilpotent parts $ \mathcal{S} $ and $ \mathcal{N} $, we use $ x^\mathcal{A} $ to mean $ x^\mathcal{S} x^\mathcal{N} $ and
\begin{align*}
(x + x_1)^\mathcal{A} = (x + x_1)^\mathcal{S}(x + x_1)^\mathcal{N} = x^\mathcal{A} (1 + x_1/x)^\mathcal{A}.
\end{align*}

Given a formal series 
\begin{align*}
f(x_1, \dots, x_\N) = \sum_{\substack{r_1, \dots, r_\N \in \mathbb{C} \\ k_1 , \dots, k_\N \in \mathbb{Z}_{\geq 0}}} a_{r_1,\dots, r_\N, k_1, \dots, k_\N} x_1^{r_1} \dots x_\N^{r_\N} (\log x_1)^{k_1} \dots (\log x_\N)^{k_\N},
\end{align*} 
we can specialize a formal variable to a sum of two formal variables by understanding logarithms of formal variables as explained above in \eqref{eq:formal meanings}. For example,
\begin{align*}
f(x + x_0, \dots, x_\N) = \sum_{\substack{r_1, \dots, r_\N \in \mathbb{C} \\ k_1 , \dots, k_\N \in \mathbb{Z}_{\geq 0}}} a_{r_1,\dots, r_\N, k_1, \dots, k_\N} (x + x_0)^{r_1} \dots x_\N^{r_\N} (\log (x + x_0))^{k_1} \dots (\log x_\N)^{k_\N}.
\end{align*}

For each $ z \in \mathbb{C}^\times $, define $ \arg z $ to be the argument of $ z $ satisfying $ 0 \leq \arg z < 2\pi $. For each $ p \in \mathbb{Z} $, we use $ l_p(z) $ to denote the logarithm of $ z $ given by $ \log |z| +  ( \arg z + 2 p \pi) \iu $.  When a formal series $ f(x_1, \dots, x_\N) $ has countable support, it can be specialized to the series 
\begin{align*}
f^{p_1, \dots, p_\N} (z_1, \dots, z_\N) = \sum_{\substack{r_1, \dots, r_\N \in \mathbb{C} \\ k_1 , \dots, k_\N \in \mathbb{Z}_{\geq 0}}} a_{r_1,\dots, r_\N, k_1, \dots, k_\N} e^{r_1 l_{p_1}(z_1) } \dots e^{r_\N l_{p_\N}(z_\N) } (l_{p_1}(z_1))^{k_1} \dots (l_{p_\N}(z_\N))^{k_\N},
\end{align*}
for any $ z_1, \dots, z_\N \in \mathbb{C}^\times $. If a series $ f $ does not have the variables explicitly stated, we will write 
\begin{align*}
f|_{x_1 = z_1, \dots , x_\N = z_\N}^{p_1, \dots , p_\N}
\end{align*}
to denote $ f^{p_1, \dots, p_\N} (z_1, \dots, z_\N) $, again suppressing the $ \log $ variables for brevity. If $ f^{p_1, \dots, p_\N} (z_1, \dots, z_\N) $ converges absolutely in some appropriate region, it converges to a multivalued analytic function $ f(z_1, \dots, z_\N) $ with branches given by $ f^{p_1, \dots, p_\N} (z_1, \dots, z_\N) $. We also write expressions such as
\begin{equation*}
\langle w_0, \mathcal{Y}(w_1,z_1) \cdots \mathcal{Y}(w_\N,z_\N) w_{\N+1} \rangle : = 
\langle w_0, \mathcal{Y}(w_1,x_1) \cdots \mathcal{Y}(w_\N,x_\N) w_{\N+1} \rangle|
_{x_1 = z_1, \dots,  x_\N = z_\N} 
\end{equation*}
to denote the (multivalued) series of complex numbers. Each branch will have the value of $ z_i^r = e^{r \log z_i} $ dependent on the choice of value $ \log z_i $.

Suppose we have operators $ g $ and $ \mathcal{L}_g $ on a space $ W $ such that $ g = e^{2 \pi \iu \mathcal{L}_g} $, and $ \mathcal{L}_g $ has commuting semisimple and locally nilpotent parts $ \mathcal{S}_g $ and $ \mathcal{N}_g $. (This is always possible when $ W $ is the direct sum of finite-dimensional subspaces on which $ g $ acts invertibly.) Then $ \mathcal{S}_g $ is determined up to its eigenvectors with eigenvalues in $ \mathbb{C} /\mathbb{Z} $. We will always assume $ \mathcal{S}_g $ is chosen so that its eigenvalues lie in $ \{ z \in \mathbb{C} : 0 \leq \Re(z) < 1 \} $. We sometimes omit the subscript $ g $ from $ \mathcal{L} $, $ \mathcal{S} $ and $ \mathcal{N} $ when the dependence on $ g $ is clear from context.

We will use $ V_{(n)} $ to denote the $ L(0) $-eigenspaces with eigenvalue $ n \in \mathbb{Z} $ of a vertex operator algebra $ V $, and use $ V_+ $ to denote $ \coprod_{n \in \mathbb{Z}_{> 0}} V_{(n)} $. If $ g $ is an automorphism of $ V $, we use $ V^{[\alpha]} $, $ V^{[\alpha]}_{(n)} $ and $ V^{[\alpha]}_+ $ to denote the spaces of generalized eigenvectors of $ g $ (acting on $ V $, $ V_{(n)} $ and $ V_+ $, respectively) with eigenvalue $ e^{2 \pi i \alpha} $. We will use $ W_{[n]} $ to denote the $ L(0) $-generalized eigenspaces of a twisted $ V $-module (see Definition 3.1 of \cite{HUANG2018346}) with eigenvalue $ n \in \mathbb{C} $. We say $ \wt w = n $ is the (conformal) weight of a vector in $ w \in W_{[n]} $.

Suppose we have a tensor product $ W_0 \otimes \dots \otimes W_{\N+1} $ of $ \mathbb{C} $-graded spaces graded by weight. Given homogeneous elements $ w_i \in W_i $, $ i = 0, \dots, \N + 1 $, we will frequently use $ \sigma $ to denote $ \Re( \wt w_0 + \dots + \wt w_{\N+1} ) $, i.e., the real component of the weight of the tensor product $ w_0 \otimes \cdots \otimes w_{\N+1} $.

\section[A Jacobi identity for intertwining operators among untwisted and twisted modules]{A Jacobi identity for \gtype\ intertwining operators}
\label{sec:Jacobi}
We use the definition of vertex operator algebra $ V = (V, Y, \one, \omega) $ in Definition 3.1.22 of \cite{LepowskyLi}. It is important that our base field is $ \mathbb{C} $. Recall that an automorphism $ g $ of a vertex operator algebra $ V $ is a linear automorphism of $ V $ such that $ gY(u,x)v = Y(gu,x)gv $ for all $ u,v \in V $, $ g\!\one = \one $ and $ g\omega = \omega $. We will use the notions of twisted modules and intertwining operators among twisted modules as defined in Definition 3.1 and Definition 4.1 of \cite{HUANG2018346}, respectively. In this paper, we will not require the $ g $-action on the $ g $-twisted module. We recall the definition of an intertwining operator among twisted modules. 

\begin{definition}
Let $ V $ be a vertex operator algebra with automorphisms $ g_1, g_2, g_3 $. Let $ W_i $ be a $ g_i $-twisted $ V $-module, for $ i = 1,2,3 $. An \emph{intertwining operator of type $\type{W_1}{W_2}{W_3}$} is a linear map
\begin{equation}
\begin{aligned}
\mathcal{Y}( \cdot, x) \cdot : W_1 \otimes W_2 &\to W_3 \{ x \}[\log x], \\
w_1 \otimes w_2 &\mapsto \sum_{k=0}^K \sum_{n \in \mathbb{C}} \mathcal{Y}_{n,k} (w_1) w_2 x^{-n-1} (\log x)^k
\end{aligned}
\end{equation}
satisfying the following conditions:
\begin{enumerate}
\item[(i)] (\emph{Lower truncation}) For all $ w_1 \in W_1 $, $ w_2 \in W_2 $, $ n \in \mathbb{C} $ and $ k = 0, \dots, K $,
\begin{equation*}
\mathcal{Y}_{n+l,k}(w_1)w_2 = 0 \text{ for all sufficiently large } l \in \mathbb{Z}.
\end{equation*} 
\item[(ii)] (\emph{$ L(-1) $-derivative property}) For all $ w_1 \in W_1 $,
\begin{equation*}
\frac{d}{dx} \mathcal{Y}( w_1, x) = \mathcal{Y}( L(-1) w_1, x).
\end{equation*}
\item[(iii)] (\emph{Duality}) For all $ u \in V $, $ w_1 \in W_1 $, $ w_2 \in W_2 $ and $ w_3' \in W_3' $, there exists a formal series
\begin{equation}\label{eq:converge function}
\begin{aligned} 
f(x_0,x_1,x_2) &= \sum_{i,j,k,l,m,n = 0}^N a_{ijklmn} x_0^{r_i} x_1^{s_j} x_2^{t_k}  (\log x_0)^l (\log x_1)^m (\log x_2)^n \\
&\in \mathbb{C}\{x_0,x_1,x_2\}[\log x_0, \log x_1, \log x_2 ] 
\end{aligned}
\end{equation}
such that for $ p_1, p_2, p_{12} \in \mathbb{Z} $, the series
\begin{equation}
\begin{aligned}
 \langle w_3', (Y_{W_3})^{p_1}(u,z_1) \mathcal{Y}^{p_2}(w_1,z_2) w_2 \rangle &= \langle w_3', Y_{W_3}(u,x_1) \mathcal{Y}(w_1,x_2) w_2 \rangle |_{x_1 = z_1, x_2 = z_2}^{p_1,p_2} , \\
 \langle w_3',  \mathcal{Y}^{p_2}(w_1,z_2) (Y_{W_2})^{p_1}(u,z_1)w_2 \rangle &= \langle w_3',  \mathcal{Y}(w_1,x_2) Y_{W_2}(u,x_1)w_2 \rangle|_{x_1 = z_1, x_2 = z_2}^{p_1,p_2} , \\
 \langle w_3',  \mathcal{Y}^{p_2}((Y_{W_1})^{p_{12}}(u,z_1 - z_2)w_1,z_2) w_2 \rangle &= \langle w_3',  \mathcal{Y}(Y_{W_1}(u,x_0)w_1,x_2) w_2 \rangle |_{x_0 = z_1-z_2, x_2 = z_2}^{p_{12},p_2} ,
 \end{aligned}
\end{equation}
are absolutely convergent in the regions $ |z_1| > |z_2| > 0 $, $|z_2| > |z_1| > 0 $ and $ |z_2| > |z_1 - z_2| > 0 $, respectively. Moreover, they converge to 
\begin{equation}\label{eq:branches}
\begin{gathered}
f^{p_1,p_1,p_2}(z_1 - z_2,z_1,z_2), \quad f^{p_2,p_1,p_2}(z_1-z_2,z_1,z_2), \quad f^{p_{12}, p_2, p_2}(z_1-z_2,z_1,z_2) 
\end{gathered}
\end{equation} 
in the regions 
\begin{equation}\label{eq:regions}
\begin{gathered}
|z_1| > |z_2| > 0 \text{ and } -\frac{\pi}{2} < \arg(z_1 - z_2) - \arg(z_1)  < \frac{\pi}{2} ,\\
|z_2| > |z_1| > 0 \text{ and } - \frac{3 \pi }{2} < \arg(z_1 - z_2) - \arg(z_1) < -\frac{\pi}{2} ,\\
|z_2| > |z_1 - z_2| > 0 \text{ and } -\frac{\pi}{2} < \arg(z_1) - \arg(z_2)  < \frac{\pi}{2} , \\
\end{gathered}
\end{equation} 
respectively. 
\end{enumerate}
To avoid saying ``intertwining operator among twisted modules", we say \emph{twisted intertwining operator}, or simply, intertwining operator. 
\end{definition}

We are interested in the following special case. 

\begin{definition}
Let $ V $ be a vertex operator algebra with an automorphism $ g $. Let $ W_1 $ be an untwisted $ V $-module (i.e., twisted by $ 1 = \id_V $). Let $ W_2 $ and $ W_3 $ be $ g $-twisted $ V $-modules. A twisted intertwining operator of type $ \type{W_1}{W_2}{W_3} $ is said to be of \emph{\gtype}.
\end{definition}  

We derive a Jacobi identity for \gtype\ intertwining operators in what follows. Fix $ V $, $ g $, and a \gtype\ intertwining operator $ \mathcal{Y}(\cdot, x): W_1 \otimes W_2 \to W_3 \{ x \}[\log x] $. Denote by $ \mathcal{L} $ the logarithm of $ g $ divided by $ 2 \pi \iu $ with commuting simple and locally nilpotent parts $ \mathcal{S} $ and $ \mathcal{N} $ such that the real part of the eigenvalues of $ \mathcal{S} $ are in $ [0,1) $.  Let $ u \in V $. Recall from \cite{Huang2016AssociativeAF} that, for $ i = 2,3 $, we have 
\begin{align*}
 {Y_{W_i,0}}( u, x) = Y_{W_i}(x^{\mathcal{N}}u,x) ,
\end{align*}  
where $ x^{\mathcal{N}} = e^{\mathcal{N} \log x } $ and $ Y_{W_i,0}(u,x) = \sum_{n\in \mathbb{C}} u_{n,0} x^{-n-1} $ contains exactly the $( \log x) $-free terms of $ Y_{W_i}(u,x) $.  For each $ k \in \mathbb{Z}_{\geq 0} $, there is a formal series $ f_k(x_0,x_1,x_2)  $ as in \eqref{eq:converge function}, such that
\begin{gather*}
\langle w_3', Y_{W_3}(\mathcal{N}^k u,x_1) \mathcal{Y}(w_1,x_2) w_2 \rangle |_{x_1 = z_1, x_2 = z_2}^{p_1,p_2} , \\
\langle w_3',  \mathcal{Y}(w_1,x_2) Y_{W_2}(\mathcal{N}^k u,x_1)w_2 \rangle|_{x_1 = z_1, x_2 = z_2}^{p_1,p_2} , \\
\langle w_3',  \mathcal{Y}(Y_{W_1}(\mathcal{N}^k u,x_0)w_1,x_2) w_2 \rangle |_{x_0 = z_1-z_2, x_2 = z_2}^{p_{12},p_2} ,
\end{gather*}
converge to the respective branches of $ f_k(z_1 - z_2,z_1,z_2) $ in the respective regions as in \eqref{eq:branches} and \eqref{eq:regions}. This implies that
\begin{gather*}
\langle w_3', Y_{W_3}((\log x_1)^k \mathcal{N}^k u,x_1) \mathcal{Y}(w_1,x_2) w_2 \rangle |_{x_1 = z_1, x_2 = z_2}^{p_1,p_2} , \\
\langle w_3',  \mathcal{Y}(w_1,x_2) Y_{W_2}((\log x_1)^k \mathcal{N}^k u,x_1)w_2 \rangle|_{x_1 = z_1, x_2 = z_2}^{p_1,p_2} , \\
\langle w_3',  \mathcal{Y}(Y_{W_1} ((\log x_1)^k \mathcal{N}^k u,x_0)w_1,x_2) w_2 \rangle |_{x_0 = z_1-z_2, x_1 = z_1, x_2 = z_2}^{p_{12},p_2, p_2} ,
\end{gather*}
converge to the respective branches of $ (\log z_1)^k f_k(z_1 - z_2,z_1,z_2) $ in the respective regions. Hence, by defining
\begin{equation}
f(x_0,x_1,x_2) = \sum_{k = 0}^K \frac{1}{k!}(\log x_1)^k f_k(x_0,x_1,x_2) ,
\end{equation}
where $ K $ is sufficiently large so that $ \mathcal{N}^{K+1} u = 0 $, we find that
\begin{equation} \label{eq:Y_0 series}
\begin{gathered}
\langle w_3', Y_{W_3,0}(u,x_1) \mathcal{Y}(w_1,x_2) w_2 \rangle |_{x_1 = z_1, x_2 = z_2}^{p_1,p_2} , \\
\langle w_3',  \mathcal{Y}(w_1,x_2) Y_{W_2,0}(u,x_1)w_2 \rangle|_{x_1 = z_1, x_2 = z_2}^{p_1,p_2} , \\
\sum_{k=0}^K \frac{1}{k!}\langle w_3',  \mathcal{Y}(Y_{W_1}((\log x_1)^k \mathcal{N}^k u,x_0)w_1,x_2) w_2 \rangle |_{x_0 = z_1-z_2, x_1 = z_1, x_2 = z_2}^{p_{12},p_2, p_2} ,
\end{gathered}
\end{equation}
converge to the respective branches of $ f(z_1 - z_2,z_1,z_2)  $ in the respective regions. 
\\

Since $ \log x_1 |_{x_1 = z_1}^{p_2}$ and $ \left(\log x_2 + \log (1 + x_0/x_2)\right)|_{x_0 = z_1-z_2, x_2 = z_2}^{p_{12}, p_2} $ both converge to the same function in the region $ |z_2| > |z_1 - z_2| > 0 $ and $ -\frac{\pi}{2} < \arg(z_1) - \arg(z_2)  < \frac{\pi}{2} $, we can replace the third series in \eqref{eq:Y_0 series} with 
\begin{gather*}
\langle w_3',  \mathcal{Y}(Y_{W_1}(x_2 + x_0)^{\mathcal{N}} u,x_0)w_1,x_2) w_2 \rangle |_{x_0 = z_1-z_2, x_2 = z_2}^{p_{12}, p_2},
\end{gather*}
where $ ( x_2 + x_0 )^{\mathcal{N}} $ means  $ \sum_{k = 0}^\infty \frac{1}{k!} \left( \log x_2 + \log\left( 1+x_0/x_2 \right) \right)^k \mathcal{N}^k $, for the same result. \\

Now assume that $ \mathcal{S} u = \alpha u $ for some $ \alpha \in \mathbb{C} $ with $ \Re(\alpha) \in [0,1) $, i.e., $ u \in V^{[\alpha]} $.
The \emph{equivariance property} 
\begin{equation}\label{eq:equivariance}
e^{2 \pi \iu x \frac{d}{dx}} Y_{W_i}(gv,x) =  Y_{W_i}(v,x) , \quad \text{for all } v \in V 
\end{equation}
satisfied by the $ g $-twisted modules $ W_i $ for $ i = 2,3 $, gives
\begin{align*}
e^{2 \pi \iu x \frac{d}{dx}} Y_{W_i,0}(u,x) &= e^{2 \pi \iu x \frac{d}{dx}} Y_{W_i}(x^{\mathcal{N}} u,x) = \sum_{k = 0}^K e^{2 \pi \iu x \frac{d}{dx}} \frac{1}{k!}(\log x)^k Y_{W_i}( \mathcal{N}^k u,x) \\
&= \sum_{k = 0}^K \frac{1}{k!}(\log x + 2\pi \iu)^k e^{2 \pi \iu x \frac{d}{dx}} Y_{W_i}( \mathcal{N}^k u,x) = \sum_{k = 0}^K \frac{1}{k!}(\log x + 2\pi \iu)^k Y_{W_i}( g^{-1}\mathcal{N}^k u,x) \\
&= Y_{W_i}( g^{-1} e^{\mathcal{N}(\log x + 2 \pi \iu)} u,x) = Y_{W_i}( e^{-2 \pi \iu \mathcal{S} } x^{\mathcal{N}} u,x)\\
&= Y_{W_i,0}(e^{-2 \pi \iu \mathcal{S} }u,x) = e^{-2 \pi \iu \alpha} Y_{W_i,0}(u,x).
\end{align*}
Hence,
\begin{align*}
Y_{W_i,0}(u,x_1) x_1^{\alpha} \in (\End W_i)[[x_1, x_1^{-1} ]] \quad \text{ for } i = 2,3.
\end{align*} 
Since $ W_1 $ is untwisted, we have 
\begin{align*}
Y_{W_1}(u,x_0) \in (\End W_1)[[x_0, x_0^{-1} ]].
\end{align*}

If $ w_1 $, $ w_2 $, $ w_3' $ and $ u $ are homogeneous, then we can see that the coefficients in
\begin{equation} \label{eq:sequences}
\begin{aligned}
\langle w_3', Y_{W_3,0}(u,x_1) \mathcal{Y}(w_1,x_2) w_2 \rangle &= \sum_{n \in \alpha + \mathbb{Z}} \sum_{h \in \mathbb{C}} \sum_{k = 0}^K \langle w_3', u_n \mathcal{Y}_{h,k}(w_1) w_2 \rangle x_1^{-n-1} x_2^{-h-1} (\log x_2)^k, \\
\langle w_3',  \mathcal{Y}(w_1,x_2) Y_{W_2,0}(u,x_1)w_2 \rangle &= \sum_{n \in \alpha + \mathbb{Z}} \sum_{h \in \mathbb{C}} \sum_{k = 0}^K \langle w_3',  \mathcal{Y}_{h,k}(w_1) u_n w_2 \rangle x_1^{-n-1} x_2^{-h-1} (\log x_2)^k, \\
\langle w_3',  \mathcal{Y}(Y_{W_1}(u,x_0)w_1,x_2) w_2 \rangle &= \sum_{n \in \mathbb{Z}} \sum_{h \in \mathbb{C}} \sum_{k = 0 }^K \langle w_3',  \mathcal{Y}_{h,k}(u_n w_1) w_2 \rangle x_0^{-n-1} x_2^{-h-1} (\log x_2)^k
\end{aligned}
\end{equation}
are zero unless $ \wt w_3' = \wt u + \wt w_1+ \wt w_2 -n-1 - h - 1  $. Extending this to the case where $ w_1 $, $ w_2 $, $ w_3' $ and $ u $ need not be homogeneous, we see that these series have support only when the powers of $ x_2 $ are real numbers added to finitely many complex numbers. By the truncation property, the real numbers that are added to the finitely many complex numbers form a strictly monotonic sequence indexed by $ \mathbb{Z}_{>0} $. \\

We recall that a \emph{unique expansion set} is a subset $ S $ of $ \mathbb{C} \times \mathbb{C} $ satisfying the condition: if a series
\begin{align*}
\sum_{(\alpha,\beta) \in S} a_{\alpha,\beta} z^\alpha (\log z)^\beta = \sum_{(\alpha,\beta) \in S} a_{\alpha,\beta} e^{\alpha \log z} (\log z)^\beta
\end{align*}
is absolutely convergent and convergent to 0 on some non-empty open subset of $ \mathbb{C}^\times $ (for some chosen branch of $ \log z $), then $ a_{\alpha,\beta} = 0 $ for all $ (\alpha, \beta) \in S $. In \cite{Huang:2017uke}, it was shown that any strictly monotonic sequence $ \{ n_i \}_{i \in \mathbb{Z}_{>0}} $ of real numbers together with a list $ m_1, \dots , m_\el $ of distinct real numbers produces a unique expansion set
\begin{equation}\label{eq:exp set}
\{ n_i + m_j \iu : i \in \mathbb{Z}_{>0}, \ j = 1, \dots , \el \} \times \{0, \dots, N \}
\end{equation}
for any $ N \in \mathbb{Z}_{\geq 0 } $. We can see that the three series in \eqref{eq:sequences}, and the three series $ f(x_1-x_2,x_1,x_2) $, $ f(-x_2+x_1, x_1, x_2) $ and $ f(x_0,x_2+x_0,x_2) $ are double sums over unique expansion sets of type \eqref{eq:exp set} with the directions of the monotonic sequences matching. 

Assume that $ S $ and $ T $ are unique expansion sets and we have two series
\begin{equation}\label{eq:two series}
\sum_{(\alpha,\beta) \in S} \sum_{(\gamma,\delta) \in T} a_{\alpha,\beta,\gamma,\delta} z_1^\alpha (\log z_1)^\beta z_2^\gamma (\log z_2)^\delta
 \quad \text{and} \quad \sum_{(\alpha,\beta) \in S} \sum_{(\gamma,\delta) \in T} b_{\alpha,\beta,\gamma,\delta} z_1^\alpha (\log z_1)^\beta z_2^\gamma (\log z_2)^\delta
\end{equation}
are both absolutely convergent in the some non-empty open set of $ \mathbb{C}^\times \times \mathbb{C}^\times $, and furthermore converge to the same values. Then for fixed $ z_1 $ (in some open set of $ \mathbb{C}^\times $), 
\begin{align*}
 \sum_{(\gamma,\delta) \in T} \left( \sum_{(\alpha,\beta) \in S}(a_{\alpha,\beta,\gamma,\delta} - b_{\alpha,\beta,\gamma,\delta} ) z_1^\alpha (\log z_1)^\beta \right) z_2^\gamma (\log z_2)^\delta  
\end{align*}
is absolutely convergent for all $ z_2 $ in some open set of $ \mathbb{C}^\times $ and is furthermore convergent to zero. Since $ T $ is a unique expansion set, 
\begin{align*}
 \sum_{(\alpha,\beta) \in S}(a_{\alpha,\beta,\gamma,\delta} - b_{\alpha,\beta,\gamma,\delta} ) z_1^\alpha (\log z_1)^\beta 
\end{align*}
converges to zero for all $ (\gamma,\delta) \in T $. Furthermore, this series converges absolutely for all $ z_1 $ in an open set of $ \mathbb{C}^\times $.  Since $ S $ is a unique expansion set, $ a_{\alpha,\beta,\gamma,\delta} - b_{\alpha,\beta,\gamma,\delta} = 0 $ for all $ (\alpha,\beta,\gamma,\delta) \in S \times T $. Hence, the two series in \eqref{eq:two series} are equivalent. \\

We now use this argument to show that the following series are equivalent. Since
\begin{align*}
x_1^\alpha \langle w_3', Y_{W_3,0}(u,x_1) \mathcal{Y}(w_1,x_2) w_2 \rangle|_{x_1 = z_1,x_2 = z_2}^{p_1,p_2} &=  x_1^\alpha f(x_0,x_1,x_2)|_{x_0 = z_1-z_2, x_1 = z_1,x_2 = z_2}^{p_1,p_1,p_2}\\
&= x_1^\alpha f(x_1 - x_2,x_1,x_2) |_{x_1 = z_1,x_2 = z_2}^{p_1,p_2} 
\end{align*}
in the region $ |z_1| > |z_2| > 0 $ and $ -\frac{\pi}{2} < \arg(z_1 -z_2) - \arg(z_1) < \frac{\pi}{2} $, we have 
\begin{align*}
x_1^\alpha f(x_1 - x_2,x_1,x_2) = x_1^\alpha \langle w_3', Y_{W_3,0}(u,x_1) \mathcal{Y}(w_1,x_2) w_2 \rangle \in \mathbb{C}\{x_2\}[[x_1,x_1^{-1}]][\log x_2 ]
\end{align*}
being lower truncated in $ x_2 $. Since
\begin{align*}
x_1^\alpha \langle w_3',  \mathcal{Y}(w_1,x_2) Y_{W_2,0}(u,x_1)w_2 \rangle|_{x_1 = z_1, x_2 = z_2}^{p_1,p_2} &=x_1^\alpha f(x_0,x_1,x_2)|_{x_0 = z_1 - z_2, x_1 = z_1, x_2 = z_2}^{p_2,p_1,p_2}\\
&= x_1^\alpha f(-x_2 + x_1,x_1,x_2)|_{x_1 = z_1, x_2 = z_2}^{p_1,p_2}
\end{align*}
in the region $ |z_2| > |z_1| > 0 $ and $ -\frac{3\pi}{2} < \arg(z_1 - z_2) - \arg(z_1) < - \frac{\pi}{2} $, we have
\begin{align*}
x_1^\alpha f(-x_2 + x_1,x_1,x_2) = x_1^\alpha \langle w_3',  \mathcal{Y}(w_1,x_2) Y_{W_2,0}(u,x_1)w_2 \rangle \in \mathbb{C}\{x_2\}[[x_1,x_1^{-1}]][\log x_2 ] 
\end{align*}
being lower truncated in $ x_1 $.
Since
\begin{align*}
&(x_2 + x_0)^\alpha \langle w_3',  \mathcal{Y}(Y_{W_1}((x_2 + x_0)^{\mathcal{N}} u,x_0)w_1,x_2) w_2 \rangle|_{x_0 = z_1 - z_2, x_2 = z_2}^{p_{12}, p_2} \\
&=(x_2 + x_0)^\alpha f(x_0,x_1,x_2)|_{x_0 = z_1 - z_2, x_1 = z_1, x_2 = z_2}^{p_{12}, p_2, p_2}\\
&= (x_2 + x_0)^\alpha f(x_0,x_2+x_0,x_2)|_{x_0 = z_1 - z_2, x_2 = z_2}^{p_{12}, p_2}
\end{align*}
in the region $ |z_2| > |z_1 - z_2| > 0 $ and $ - \frac{\pi}{2} < \arg(z_1) - \arg(z_2) < \frac{\pi}{2} $, we have
\begin{align*}
(x_2 + x_0)^\alpha f(x_0,x_2+x_0,x_2) &= (x_2 + x_0)^\alpha \langle w_3',  \mathcal{Y}(Y_{W_1}((x_2 + x_0)^{\mathcal{N}} u,x_0)w_1,x_2) w_2 \rangle \\
& \in \mathbb{C}\{x_2\}[[x_0,x_0^{-1}]][\log x_2 ] 
\end{align*}
being lower truncated in $ x_0 $. So we have
\begin{align*}
x_1^\alpha f(x_0,x_1,x_2) \in \mathbb{C}\{x_2\}[x_0,x_0^{-1},x_1,x_1^{-1} , \log x_2]
\end{align*}
such that the formal series
\begin{gather*}
x_1^\alpha \langle w_3', Y_{W_3,0}(u,x_1) \mathcal{Y}(w_1,x_2) w_2 \rangle  = x_1^\alpha f(x_1 - x_2,x_1,x_2) \in \mathbb{C}\{x_2\}[[x_1,x_1^{-1}]][\log x_2 ], \\
x_1^\alpha \langle w_3',  \mathcal{Y}(w_1,x_2) Y_{W_2,0}(u,x_1)w_2 \rangle  = x_1^\alpha f(-x_2 + x_1,x_1,x_2) \in \mathbb{C}\{x_2\}[[x_1,x_1^{-1}]][\log x_2 ], \\
(x_2 + x_0)^\alpha \langle w_3',  \mathcal{Y}(Y_{W_1}((x_2 + x_0)^{\mathcal{N}} u,x_0)w_1,x_2) w_2 \rangle  \\
= (x_2 + x_0)^\alpha f(x_0,x_2+x_0,x_2) \in \mathbb{C}\{x_2\}[[x_0,x_0^{-1}]][\log x_2 ],
\end{gather*}
are lower truncated in $ x_2 $, $ x_1 $ and $ x_0 $, respectively. Since we can multiply both sides of 
\begin{gather*}
x_0^{-1} \delta \left( \frac{x_1 - x_2}{x_0} \right) - x_0^{-1} \delta \left( \frac{-x_2 + x_1}{x_0} \right) = x_1^{-1} \delta \left( \frac{x_2 + x_0}{x_1} \right) 
\end{gather*}
by $ x_1^\alpha f(x_0,x_1,x_2) $, which has no logarithms of $ x_0 $ and $ x_1 $ and only integral powers of $ x_0 $ and $ x_1 $, we have
\begin{gather*}
x_0^{-1} \delta \left( \frac{x_1 - x_2}{x_0} \right) x_1^\alpha f(x_1 - x_2,x_1,x_2) - x_0^{-1} \delta \left( \frac{-x_2 + x_1}{x_0} \right) x_1^\alpha f(-x_2+x_1,x_1,x_2) \\
= x_1^{-1} \delta \left( \frac{x_2 + x_0}{x_1} \right) (x_2+x_0)^\alpha f(x_0,x_2+x_0,x_2),
\end{gather*}
hence
\begin{gather*}
x_0^{-1} \delta \left( \frac{x_1 - x_2}{x_0} \right) x_1^\alpha \langle w_3', Y_{W_3,0}(u,x_1) \mathcal{Y}(w_1,x_2) w_2 \rangle  - x_0^{-1} \delta \left( \frac{-x_2 + x_1}{x_0} \right) x_1^\alpha \langle w_3',  \mathcal{Y}(w_1,x_2) Y_{W_2,0}(u,x_1)w_2 \rangle  \\
= x_1^{-1} \delta \left( \frac{x_2 + x_0}{x_1} \right) (x_2 + x_0)^\alpha \langle w_3',  \mathcal{Y}(Y_{W_1}(( x_2 + x_0)^{\mathcal{N}}  u,x_0)w_1,x_2) w_2 \rangle,
\end{gather*}
for each fixed $ u \in V $, $ w_1 \in W_1 $ and for all $ w_2 \in W_2 $, $ w_3' \in W_3' $. Hence, we have the following result.
\begin{proposition}
Let $ \mathcal{Y}( \cdot, x) \cdot : W_1 \otimes W_2 \to W_3 \{ x \}[\log x] $ be an intertwining operator of \gtype. Then, for all $ u \in V^{[\alpha]} $, $ \alpha \in \mathbb{C} $ with $ \alpha \in [0,1) $, and for all $ w_1 \in W_1 $, we have
\begin{equation}\label{eq:Y_0 Jacobi}
\begin{gathered}
x_0^{-1} \delta \left( \frac{x_1 - x_2}{x_0} \right) x_1^\alpha Y_{W_3,0} (u,x_1) \mathcal{Y}(w_1, x_2) - x_0^{-1} \delta \left( \frac{-x_2 + x_1}{x_0} \right) x_1^\alpha \mathcal{Y}(w_1, x_2) Y_{
W_2,0}(u,x_1) \\
= x_1^{-1} \delta \left( \frac{x_2 + x_0}{x_1} \right) \mathcal{Y}\left( Y_{W_1} \left( ( x_2 + x_0 )^{\mathcal{L}} u,x_0 \right)w_1, x_2 \right).
\end{gathered}\vspace{-2em}
\end{equation}
\qed
\end{proposition}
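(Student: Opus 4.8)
The plan is to obtain \eqref{eq:Y_0 Jacobi} directly from the Duality axiom (iii) in the definition of the intertwining operator, by replacing the full twisted vertex operators $Y_{W_i}$ on $W_2,W_3$ with their $(\log x)$-free parts $Y_{W_i,0}$ and then invoking the equivariance property of the $g$-twisted modules. The key device is the twist by $x_1^{\alpha}$: once $u$ is taken from a single generalized eigenspace $V^{[\alpha]}$ with $\Re(\alpha)\in[0,1)$, the series $x_1^{\alpha}Y_{W_i,0}(u,x_1)$ should involve only integral powers of $x_1$ and no $\log x_1$, which is exactly the ``untwisted-looking'' situation in which the classical $\delta$-function manipulation underlying the Jacobi identity applies.

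First, for each $k$ I would apply Duality to $\mathcal{N}^k u$ to get a common formal series $f_k(x_0,x_1,x_2)$ as in \eqref{eq:converge function}, and assemble $f=\sum_{k=0}^{K}\frac{1}{k!}(\log x_1)^k f_k$ with $K$ large enough that $\mathcal{N}^{K+1}u=0$. Using $Y_{W_i,0}(u,x)=Y_{W_i}(x^{\mathcal{N}}u,x)$ and the fact that $\log x_1|_{x_1=z_1}^{p_2}$ and $\bigl(\log x_2+\log(1+x_0/x_2)\bigr)|_{x_0=z_1-z_2,x_2=z_2}^{p_{12},p_2}$ agree in the relevant region, the three correlation functions built from $Y_{W_3,0}(u,x_1)\mathcal{Y}(w_1,x_2)$, $\mathcal{Y}(w_1,x_2)Y_{W_2,0}(u,x_1)$ and $\mathcal{Y}(Y_{W_1}((x_2+x_0)^{\mathcal{N}}u,x_0)w_1,x_2)$ then converge to the respective branches of $f$. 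Next, I would use equivariance $e^{2\pi\iu x\frac{d}{dx}}Y_{W_i}(gv,x)=Y_{W_i}(v,x)$ to compute $e^{2\pi\iu x\frac{d}{dx}}Y_{W_i,0}(u,x)=e^{-2\pi\iu\alpha}Y_{W_i,0}(u,x)$, so that $x_1^{\alpha}Y_{W_i,0}(u,x_1)\in(\End W_i)[[x_1,x_1^{-1}]]$ for $i=2,3$, while $Y_{W_1}(u,x_0)\in(\End W_1)[[x_0,x_0^{-1}]]$ since $W_1$ is untwisted.

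Then, using the weight grading together with lower truncation, I would observe that all the series in \eqref{eq:sequences}, as well as $f(x_1-x_2,x_1,x_2)$, $f(-x_2+x_1,x_1,x_2)$ and $f(x_0,x_2+x_0,x_2)$, are double sums over unique expansion sets of the type \eqref{eq:exp set} with matching monotonicity directions. The two-step unique-expansion argument (fix $z_1$ and expand in $z_2$, then expand in $z_1$) upgrades the analytic identities of branches to identities of formal series, giving $x_1^{\alpha}\langle w_3',Y_{W_3,0}(u,x_1)\mathcal{Y}(w_1,x_2)w_2\rangle=x_1^{\alpha}f(x_1-x_2,x_1,x_2)$ lower truncated in $x_2$, and the analogues lower truncated in $x_1$ and in $x_0$, so that $x_1^{\alpha}f(x_0,x_1,x_2)\in\mathbb{C}\{x_2\}[x_0,x_0^{-1},x_1,x_1^{-1},\log x_2]$. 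At this point I would multiply the elementary identity $x_0^{-1}\delta\!\left(\frac{x_1-x_2}{x_0}\right)-x_0^{-1}\delta\!\left(\frac{-x_2+x_1}{x_0}\right)=x_1^{-1}\delta\!\left(\frac{x_2+x_0}{x_1}\right)$ by $x_1^{\alpha}f(x_0,x_1,x_2)$ — legitimate precisely because this series has only integral powers of $x_0,x_1$ and no $\log x_0,\log x_1$, so the three substitutions are the genuine expansions — and translate back into the $Y_{W_i,0}$ and $\mathcal{Y}$ language.

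Finally, I would normalize: under $x_1^{-1}\delta\!\left(\frac{x_2+x_0}{x_1}\right)$ one may replace the prefactor $x_1^{\alpha}$ by $(x_2+x_0)^{\alpha}$, and since $\mathcal{S}u=\alpha u$ one has $(x_2+x_0)^{\alpha}(x_2+x_0)^{\mathcal{N}}u=(x_2+x_0)^{\mathcal{S}}(x_2+x_0)^{\mathcal{N}}u=(x_2+x_0)^{\mathcal{L}}u$, which yields the stated right-hand side of \eqref{eq:Y_0 Jacobi}. I expect the main obstacle to be the third step: carefully checking that every series in play is supported on a unique expansion set of type \eqref{eq:exp set} with compatible monotonicity, and running the iterated unique-expansion argument cleanly in two variables to pass from the analytic Duality statements to honest formal-series equalities. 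A secondary subtlety is the branch bookkeeping in the first step — ensuring the reindexing of $(\log x_1)^k\mathcal{N}^k$ into $(x_2+x_0)^{\mathcal{N}}$ respects the branch choices $p_1,p_2,p_{12}$ in the three regions — together with the point that it is the normalization $\Re(\alpha)\in[0,1)$, not merely $\alpha\in\mathbb{C}/\mathbb{Z}$, that forces $x_1^{\alpha}Y_{W_i,0}(u,x_1)$ to involve only integral powers of $x_1$.
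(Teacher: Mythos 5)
Your proposal is correct and follows essentially the same route as the paper: assembling $f=\sum_k\frac{1}{k!}(\log x_1)^k f_k$ from duality applied to $\mathcal{N}^k u$, using equivariance to show $x_1^{\alpha}Y_{W_i,0}(u,x_1)$ has only integral powers of $x_1$ and no logarithms, upgrading the analytic branch identities to formal-series identities via the unique-expansion-set argument, and then multiplying the elementary $\delta$-function identity by $x_1^{\alpha}f(x_0,x_1,x_2)$ before absorbing $(x_2+x_0)^{\alpha}(x_2+x_0)^{\mathcal{N}}$ into $(x_2+x_0)^{\mathcal{L}}$. No gaps worth flagging.
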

Here we have used a more condensed form of writing $ (x_2 + x_0)^{\mathcal{L}} = (x_2 + x_0)^{\mathcal{S}}(x_2 + x_0)^{\mathcal{N}} $, since $ \mathcal{S} $ and $ \mathcal{N} $ commute. \\

Recalling that 
\begin{align*}
Y_{W_i,0}(x^{-\mathcal{N}} u ,x) = Y_{W_i}(u,x), \qquad \text{ for } i = 2,3,
\end{align*}
and observing this Jacobi identity holds for $ u $  replaced with $ \mathcal{N}^k u $ (another eigenvector of $ S $ with eigenvalue $ \alpha $), we can multiply both sides by $ \frac{(-1)^k}{k!} (\log x_1)^k $, and sum over $ k = 0, \dots, K $ to obtain
\begin{equation*}
\begin{gathered}
x_0^{-1} \delta \left( \frac{x_1 - x_2}{x_0} \right) Y_{W_3}(u,x_1) \mathcal{Y}(w_1, x_2) - x_0^{-1} \delta \left( \frac{-x_2 + x_1}{x_0} \right) \mathcal{Y}(w_1, x_2) Y_{W_2}(u,x_1) \\
= x_1^{-1} \delta \left( \frac{x_2 + x_0}{x_1} \right)  \mathcal{Y}\left( Y_{W_1} \left( \left(\frac{x_2 + x_0}{x_1} \right)^{\mathcal{L} }u,x_0 \right)w_1, x_2 \right).
\end{gathered}
\end{equation*}

Since this form of the Jacobi identity makes no reference to the eigenvalue of $ u $, we can extend this to all $ u \in V $ that are sums of generalized eigenvectors of $ g $. Since $ V $ is spanned by generalized eigenvectors of $ g $, we have the following general result.
\begin{proposition}
Let $ \mathcal{Y}( \cdot, x) \cdot : W_1 \otimes W_2 \to W_3 \{ x \}[\log x] $ be an intertwining operator of \gtype. Then, for all $ u \in V $ and $ w_1 \in W_1 $, we have
\begin{equation}\label{eq:Y Jacobi}
\begin{gathered}
x_0^{-1} \delta \left( \frac{x_1 - x_2}{x_0} \right) Y_{W_3}(u,x_1) \mathcal{Y}(w_1, x_2) - x_0^{-1} \delta \left( \frac{-x_2 + x_1}{x_0} \right) \mathcal{Y}(w_1, x_2) Y_{W_2}(u,x_1) \\
= x_1^{-1} \delta \left( \frac{x_2 + x_0}{x_1} \right)  \mathcal{Y}\left( Y_{W_1} \left( \left(\frac{x_2 + x_0}{x_1} \right)^{\mathcal{L} }u,x_0 \right)w_1, x_2 \right).
\end{gathered} \vspace{-2em}
\end{equation}
\qed
\end{proposition}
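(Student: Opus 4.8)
The plan is to derive \eqref{eq:Y Jacobi} from the special case \eqref{eq:Y_0 Jacobi} already proved in the preceding Proposition, which controls the $(\log x_1)$-free operators $Y_{W_i,0}$ for a vector $u\in V^{[\alpha]}$, together with the two structural identities recalled above: $Y_{W_i,0}(u,x)=Y_{W_i}(x^{\mathcal N}u,x)$ and, inverting it, $Y_{W_i}(u,x)=Y_{W_i,0}(x^{-\mathcal N}u,x)=\sum_{k=0}^{K}\tfrac{(-1)^k}{k!}(\log x)^k\,Y_{W_i,0}(\mathcal N^k u,x)$ for $i=2,3$, the sum being finite because $\mathcal N$ is locally nilpotent on $V$.

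First I would fix $u\in V^{[\alpha]}$ with $\Re(\alpha)\in[0,1)$, choose $K$ with $\mathcal N^{K+1}u=0$, and note that each $\mathcal N^k u$ again lies in $V^{[\alpha]}$ because $\mathcal N$ commutes with $\mathcal S$; hence \eqref{eq:Y_0 Jacobi} holds verbatim with $u$ replaced by $\mathcal N^k u$. I would then form the linear combination $\sum_{k=0}^{K}\tfrac{(-1)^k}{k!}(\log x_1)^k\times(\text{the identity for }\mathcal N^k u)$. On the left side the two $k$-sums telescope, via the inverse identity above, into $Y_{W_3}(u,x_1)$ and $Y_{W_2}(u,x_1)$, leaving an overall scalar prefactor $x_1^{\alpha}$ on each term. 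On the right side, using $(x_2+x_0)^{\mathcal L}\mathcal N^k u=(x_2+x_0)^{\alpha}(x_2+x_0)^{\mathcal N}\mathcal N^k u$ together with the computation $\sum_{k=0}^{K}\tfrac{(-1)^k}{k!}(\log x_1)^k(x_2+x_0)^{\mathcal N}\mathcal N^k=e^{-\mathcal N\log x_1}e^{\mathcal N\log(x_2+x_0)}=\left(\tfrac{x_2+x_0}{x_1}\right)^{\mathcal N}$, the right-hand side collapses to $x_1^{-1}\delta\!\left(\tfrac{x_2+x_0}{x_1}\right)(x_2+x_0)^{\alpha}\,\mathcal Y\!\left(Y_{W_1}\!\left(\left(\tfrac{x_2+x_0}{x_1}\right)^{\mathcal N}u,x_0\right)w_1,x_2\right)$.

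It then remains to multiply both sides by the unit $x_1^{-\alpha}$: this cancels the prefactor $x_1^{\alpha}$ on the left, and on the right it turns $(x_2+x_0)^{\alpha}$ into $\left(\tfrac{x_2+x_0}{x_1}\right)^{\alpha}$, which is exactly $\left(\tfrac{x_2+x_0}{x_1}\right)^{\mathcal S}$ acting on $u\in V^{[\alpha]}$; since $\mathcal S$ and $\mathcal N$ commute, $\left(\tfrac{x_2+x_0}{x_1}\right)^{\mathcal S}\left(\tfrac{x_2+x_0}{x_1}\right)^{\mathcal N}=\left(\tfrac{x_2+x_0}{x_1}\right)^{\mathcal L}$, which yields \eqref{eq:Y Jacobi} for this $u$. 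Finally, since the resulting identity refers only to $\mathcal L,\mathcal S,\mathcal N$ and not to the eigenvalue $\alpha$, and since $V$ is spanned by generalized $g$-eigenvectors, I would conclude by $\mathbb C$-linearity in $u$ that \eqref{eq:Y Jacobi} holds for all $u\in V$ and all $w_1\in W_1$.

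The step I expect to be most delicate is the formal bookkeeping in the second paragraph: one must verify that the substitution carried by $\delta\!\left(\tfrac{x_2+x_0}{x_1}\right)$ is compatible with the logarithm conventions \eqref{eq:formal meanings} (so that $\log x_1$ and $\log(x_2+x_0)=\log x_2+\log(1+x_0/x_2)$ may be matched under that $\delta$), that $e^{-\mathcal N\log x_1}e^{\mathcal N\log(x_2+x_0)}$ acts as a finite sum on every vector, and that multiplication by $x_1^{-\alpha}$ preserves well-definedness of both sides as $W_3$-valued generalized series; all of this is underwritten by the lower-truncation properties used to derive \eqref{eq:Y_0 Jacobi}, but should be stated carefully. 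One could alternatively try to run the duality and unique-expansion-set argument behind \eqref{eq:Y_0 Jacobi} directly for the operators $Y_{W_i}$, but the $\log x_1$ terms in those operators make the expansion-set analysis heavier, which is why it is cleaner to pass through the $(\log x_1)$-free operators $Y_{W_i,0}$ and then reconstruct.
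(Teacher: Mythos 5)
Your proposal is correct and follows essentially the same route as the paper: replace $u$ by $\mathcal{N}^k u$ in \eqref{eq:Y_0 Jacobi}, multiply by $\tfrac{(-1)^k}{k!}(\log x_1)^k$, sum over $k$ to reconstruct $Y_{W_2}(u,x_1)$ and $Y_{W_3}(u,x_1)$ via $Y_{W_i,0}(x^{-\mathcal{N}}u,x)=Y_{W_i}(u,x)$, absorb the eigenvalue factor to produce $\bigl(\tfrac{x_2+x_0}{x_1}\bigr)^{\mathcal{L}}$, and extend by linearity over generalized $g$-eigenvectors since the final identity makes no reference to $\alpha$. Your explicit handling of the $x_1^{-\alpha}$ cancellation is a point the paper leaves implicit, but it is the same argument.
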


In what follows, the subscript of $ Y_{W_i} $ and the $ 0 $ in $ u_{n,0} $ will be dropped for brevity. \\

The Jacobi identities \eqref{eq:Y_0 Jacobi} and \eqref{eq:Y Jacobi} allow us to work with intertwining operators using formal calculus. Equation \eqref{eq:Y_0 Jacobi} has the benefit of being free of $ \log x_1 $ with only integral powers of $ x_1 $, allowing residues with respect to $ x_1 $ to be taken. Since both equations are free of $ \log x_0 $ with only integral powers of $ x_0 $, we can use \eqref{eq:Y Jacobi} when taking residues with respect to $ x_0 $. We will now prove several results using the Jacobi identities. \\

Given $ u \in V^{[\alpha]} $ for $ \alpha \in \mathbb{C} $ with $ \Re(\alpha) \in [0,1) $, we define
\begin{equation}
Y^+ (u, x) := \sum_{k=0}^K \sum_{n \in \alpha + \mathbb{Z}_{<0}} u_{n,k} x^{-n-1} (\log x)^k \quad \text{and} \quad Y^- (u, x) := \sum_{k=0}^K  \sum_{n \in \alpha + \mathbb{Z}_{\geq 0}} u_{n,k} x^{-n-1} (\log x)^k  .
\end{equation}
We use the notation for regular and singular parts of $ Y(u,x) $ because they contain the non-negative and negative integral powers of $ x $, respectively, when $ W $ is untwisted.

\begin{lemma}
Let $ u \in V^{[\alpha]} $ for $ \alpha \in \mathbb{C} $ with $ \Re(\alpha) \in [0,1) $, and let $ w_1 \in W_1 $. Then 
\begin{equation}\label{eq:take u(-1) out Y}
\begin{aligned}
Y^+ (u, x_2)\mathcal{Y}(w_1, x_2)  +  \mathcal{Y}(w_1, x_2) Y^- (u, x_2) &= \sum_{k \geq 0} x_2^{- k} \mathcal{Y}\left( \left( {\mathcal{L} \choose k}   u  \right)_{-1+k} w_1 , x_2 \right) .
\end{aligned}
\end{equation}
\end{lemma}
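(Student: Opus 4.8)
The plan is to obtain the identity as an iterated residue of the Jacobi identity \eqref{eq:Y_0 Jacobi}, which is the form suited to taking $\res_{x_1}$, since it involves only integral powers of $x_1$ and no $\log x_1$. Fix $u\in V^{[\alpha]}$ as in the statement, and write $Y^{+}_{0}(u,x):=\sum_{n\in\alpha+\mathbb{Z}_{<0}}u_{n,0}x^{-n-1}$ and $Y^{-}_{0}(u,x):=\sum_{n\in\alpha+\mathbb{Z}_{\geq 0}}u_{n,0}x^{-n-1}$ for the $(\log x)$-free regular and singular parts (on $W_3$, resp.\ $W_2$, as needed).

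First I would multiply \eqref{eq:Y_0 Jacobi} by $x_0^{-1}$ and apply $\res_{x_0}$, which is legitimate as \eqref{eq:Y_0 Jacobi} has only integral powers of $x_0$ and no $\log x_0$. This picks out, from each $\delta$-factor on the left, the single term $(x_1-x_2)^{-1}$, expanded in nonnegative powers of $x_2$ for the first and of $x_1$ for the second; denote these expansions $\iota_{x_1,x_2}(x_1-x_2)^{-1}$ and $\iota_{x_2,x_1}(x_1-x_2)^{-1}$. On the right, expanding $(x_2+x_0)^{\mathcal{L}}u=x_2^{\mathcal{L}}(1+x_0/x_2)^{\mathcal{L}}u=\sum_{q\geq 0}\binom{\mathcal{L}}{q}x_2^{\mathcal{L}-q}x_0^q u$, pulling $Y_{W_1}$ past the formal scalars, and reading off the $x_0^{-1}$-coefficient gives a sum of terms $\binom{n}{i}x_2^{n-i-q}x_1^{-n-1}\mathcal{Y}\big((\binom{\mathcal{L}}{q}x_2^{\mathcal{L}}u)_{i+q-1}w_1,x_2\big)$ over $n\in\mathbb{Z}$, $i,q\geq 0$. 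The resulting identity equates $\iota_{x_1,x_2}(x_1-x_2)^{-1}x_1^{\alpha}Y_{W_3,0}(u,x_1)\mathcal{Y}(w_1,x_2)-\iota_{x_2,x_1}(x_1-x_2)^{-1}x_1^{\alpha}\mathcal{Y}(w_1,x_2)Y_{W_2,0}(u,x_1)$ with that double sum; since the factors $\iota_{\bullet}(x_1-x_2)^{-1}$ carry only nonnegative powers of $x_2$, the lower-truncation properties of $x_1^{\alpha}Y_{W_i,0}(u,x_1)\mathcal{Y}(w_1,x_2)$ established in the derivation of \eqref{eq:Y_0 Jacobi} ensure the next residue is well defined.

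Next I would apply $\res_{x_1}$. On the left, $\res_{x_1}\big[\iota_{x_1,x_2}(x_1-x_2)^{-1}x_1^{\alpha}Y_{W_3,0}(u,x_1)\big]$ keeps exactly the modes $u_{n,0}$ with $n\in\alpha+\mathbb{Z}_{<0}$ and equals $x_2^{\alpha}Y^{+}_{0}(u,x_2)$, while the second term yields $-x_2^{\alpha}Y^{-}_{0}(u,x_2)$; with the intervening minus sign the left-hand side becomes $x_2^{\alpha}\big(Y^{+}_{0}(u,x_2)\mathcal{Y}(w_1,x_2)+\mathcal{Y}(w_1,x_2)Y^{-}_{0}(u,x_2)\big)$. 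On the right, $\res_{x_1}$ forces $n=0$ and hence $i=0$, leaving $\sum_{q\geq 0}x_2^{-q}\mathcal{Y}\big((\binom{\mathcal{L}}{q}x_2^{\mathcal{L}}u)_{q-1}w_1,x_2\big)$. Cancelling $x_2^{\alpha}$ produces a ``$\log$-free'' version of the asserted identity, still carrying the unwanted factor $x_2^{\mathcal{L}}$ inside $\mathcal{Y}$ and the $(\log x)$-free parts $Y^{\pm}_{0}$ instead of $Y^{\pm}$.

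The final step clears both of these at once, using the device already employed to pass from \eqref{eq:Y_0 Jacobi} to \eqref{eq:Y Jacobi}: replace $u$ by $\mathcal{N}^k u$ in the $\log$-free identity, multiply by $\tfrac{(-1)^k}{k!}(\log x_2)^k$, and sum over $k\geq 0$ (a finite sum, since $\mathcal{N}$ is locally nilpotent). On the left, $\sum_k\tfrac{(-1)^k}{k!}(\log x_2)^k Y^{\pm}_{0}(\mathcal{N}^k u,x_2)=Y^{\pm}(u,x_2)$, recovering the left-hand side of the lemma. On the right, substituting $x_2^{\mathcal{L}}u=x_2^{\alpha}\sum_j\tfrac{(\log x_2)^j}{j!}\mathcal{N}^j u$, the $\log x_2$ arising here combines with the $(\log x_2)^k$ coming from the substitution, and every contribution of positive total $\log x_2$-degree cancels via $\sum_{k=0}^{p}\binom{p}{k}(-1)^k=\delta_{p,0}$, leaving exactly $\sum_{k\geq 0}x_2^{-k}\mathcal{Y}\big((\binom{\mathcal{L}}{k}u)_{-1+k}w_1,x_2\big)$, which is the right-hand side of the lemma. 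The main obstacle I anticipate is the bookkeeping in this last step: tracking the powers of $\log x_2$ and of $\mathcal{N}$ that are generated both by the expansion of $x_2^{\mathcal{L}}u$ and by the $Y_{W_i,0}\to Y_{W_i}$ substitution and confirming that the binomial identity annihilates precisely the unwanted terms; a lesser point is verifying that each residue in the chain is well defined, for which the lower-truncation facts from the derivation of \eqref{eq:Y_0 Jacobi} suffice.
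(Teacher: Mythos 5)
Your proposal is correct and follows essentially the same route as the paper: apply $\res_{x_0}x_0^{-1}$ and $\res_{x_1}$ to the Jacobi identity \eqref{eq:Y_0 Jacobi} to obtain the $\log$-free identity \eqref{eq:take u(-1) out Y_0} (your computation of both sides matches the paper's, up to the harmless reordering of the two residues), and then replace $u$ by $\mathcal{N}^j u$, weight by $\tfrac{(-1)^j}{j!}(\log x_2)^j$, and sum — exactly the paper's final step, with your binomial-cancellation bookkeeping being just the coefficient-level form of $x_2^{\mathcal{N}}x_2^{-\mathcal{N}}=1$ and $Y^\pm(u,x_2)=Y_0^\pm(x_2^{-\mathcal{N}}u,x_2)$.
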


\begin{proof}
Multiplying the right-hand side of \eqref{eq:Y_0 Jacobi} by $ x_0^{-1} $ and then taking $ \res_{x_0} \res_{x_1} $ gives
\begin{align*}
&\res_{x_0} x_0^{-1} \mathcal{Y}( Y( (x_2 + x_0)^{\mathcal{L}}u, x_0)w_1, x_2) \\
&= \res_{x_0} x_0^{-1} \sum_{n \in \mathbb{Z}} \sum_{k \geq 0} \mathcal{Y}\left( \left( {\mathcal{L} \choose k}  x_2^{\mathcal{L}-k} u  \right)_n w_1 , x_2 \right) x_0^k x_0^{-n-1}  \\
&= \sum_{k \geq 0} \mathcal{Y}\left( \left( {\mathcal{L} \choose k}  x_2^{\mathcal{L} - k} u  \right)_{-1+k} w_1 , x_2 \right)
\end{align*}
And doing the same to the left-hand side of \eqref{eq:Y_0 Jacobi} gives
\begin{align*}
& \res_{x_1} x_1^\alpha\left( (x_1 - x_2)^{-1} Y_0(u,x_1) \mathcal{Y} (w_1, x_2) -  (-x_2 + x_1)^{-1} \mathcal{Y}(w_1, x_2) Y_0(u,x_1) \right) \\
&= \res_{x_1} \left( \sum_{k \in \mathbb{Z}_{\geq 0}} x_1^{-1-k} x_2^k \sum_{n \in \mathbb{Z}} u_{\alpha+n} x_1^{-n-1} \mathcal{Y} (w_1, x_2) + \sum_{k \in \mathbb{Z}_{\geq 0}} x_2^{-1-k} x_1^k \mathcal{Y}(w_1, x_2)  \sum_{n \in \mathbb{Z}} u_{\alpha+n} x_1^{-n-1}  \right) \\
&= \sum_{n \in \mathbb{Z}_{< 0}} u_{\alpha+n} x_2^{-n-1}\mathcal{Y}(w_1, x_2)  + \mathcal{Y}(w_1, x_2) \sum_{n \in \mathbb{Z}_{\geq 0}} u_{\alpha + n} x_2^{-n-1}  \\
&= x_2^\alpha \sum_{n \in \alpha + \mathbb{Z}_{< 0}} u_n x_2^{-n-1}\mathcal{Y}(w_1, x_2)  + x_2^\alpha \mathcal{Y}(w_1, x_2) \sum_{n \in \alpha + \mathbb{Z}_{\geq 0}} u_n x_2^{-n-1}  \\
&= x_2^\alpha Y_0^+ (u, x_2)\mathcal{Y}(w_1, x_2)  +  x_2^\alpha \mathcal{Y}(w_1, x_2) Y_0^- (u, x_2),
\end{align*}
where we have defined
\begin{equation}
Y_0^+ (u, x) := \sum_{n \in \alpha + \mathbb{Z}_{<0}} u_n x^{-n-1} \quad \text{and} \quad Y_0^- (u, x) := \sum_{n \in \alpha + \mathbb{Z}_{\geq 0}} u_n x^{-n-1} .
\end{equation}
Hence, we have
\begin{equation}\label{eq:take u(-1) out Y_0}
\begin{aligned}
Y_0^+ (u, x_2)\mathcal{Y}(w_1, x_2)  +  \mathcal{Y}(w_1, x_2) Y_0^- (u, x_2) = \sum_{k \geq 0} \mathcal{Y}\left( \left( {\mathcal{L} \choose k}  x_2^{\mathcal{N} - k} u  \right)_{-1+k} w_1 , x_2 \right).
\end{aligned}
\end{equation}
Replacing $ u $ with $ \frac{(-1)^j}{j!}(\log x_2)^j \mathcal{N}^j u $ and summing over $ j = 0 , \dots, K $ gives the desired identity.
\end{proof}

\begin{lemma}
Let $ u \in V^{[\alpha]} $ for $ \alpha \in \mathbb{C} $ with $ \Re(\alpha) \in [0,1) $, and let $ w_1 \in W_1 $. Then 
\begin{equation}\label{eq: Y+/- commutators}
\begin{aligned}
[Y^-(u,x_1), \mathcal{Y}(w_1, x_2) ] &= \sum_{j,k \geq 0} (x_1 - x_2)^{-1-j}  x_2^{-k} \mathcal{Y}\left( \left( \left(\frac{x_2}{x_1} \right)^{\mathcal{L}} {\mathcal{L} \choose k}   u\right)_{j+k} w_1, x_2 \right) , \\
[Y^+(u,x_1), \mathcal{Y}(w_1, x_2) ] &=  - \sum_{j,k \geq 0} (-x_2 + x_1)^{-1-j}  x_2^{-k} \mathcal{Y}\left( \left( \left(\frac{x_2}{x_1} \right)^{\mathcal{L}} {\mathcal{L} \choose k}   u\right)_{j+k} w_1, x_2 \right)  .
\end{aligned}
\end{equation}
\end{lemma}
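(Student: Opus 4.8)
The plan is to take the residue $\res_{x_0}$ of the Jacobi identity~\eqref{eq:Y Jacobi} -- which is legitimate since that identity involves only integral powers of $x_0$ and no $\log x_0$ -- and then to split the resulting identity according to the power of $x_1$ into the contribution of the modes of $Y(u,x_1)$ indexed by $\alpha+\mathbb{Z}_{\geq 0}$ and the contribution of those indexed by $\alpha+\mathbb{Z}_{<0}$.

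First I would handle the left-hand side of~\eqref{eq:Y Jacobi}: since $\res_{x_0}x_0^{-1}\delta\!\left(\frac{x_1-x_2}{x_0}\right)=\res_{x_0}x_0^{-1}\delta\!\left(\frac{-x_2+x_1}{x_0}\right)=1$, taking $\res_{x_0}$ yields $Y_{W_3}(u,x_1)\mathcal{Y}(w_1,x_2)-\mathcal{Y}(w_1,x_2)Y_{W_2}(u,x_1)$. Because $u\in V^{[\alpha]}$, the modes of $Y_{W_i}(u,x_1)$ run over $\alpha+\mathbb{Z}$, so $Y_{W_i}(u,x_1)=Y^-(u,x_1)+Y^+(u,x_1)$ for $i=2,3$, where $Y^-(u,x_1)$ supplies precisely the monomials $x_1^{-n-1}(\log x_1)^k$ with $n\in\alpha+\mathbb{Z}_{\geq 0}$ and $Y^+(u,x_1)$ those with $n\in\alpha+\mathbb{Z}_{<0}$; these two families of $x_1$-powers are disjoint. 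Hence the $\res_{x_0}$ of the left side equals $[Y^-(u,x_1),\mathcal{Y}(w_1,x_2)]+[Y^+(u,x_1),\mathcal{Y}(w_1,x_2)]$, the first bracket being supported on $x_1^{-n-1}$ with $n\in\alpha+\mathbb{Z}_{\geq 0}$ and the second on $x_1^{-n-1}$ with $n\in\alpha+\mathbb{Z}_{<0}$.

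Next I would compute $\res_{x_0}$ of the right side. Writing $\left(\tfrac{x_2+x_0}{x_1}\right)^{\mathcal{L}}=\left(\tfrac{x_2}{x_1}\right)^{\mathcal{L}}\left(1+\tfrac{x_0}{x_2}\right)^{\mathcal{L}}=\sum_{k\geq 0}\binom{\mathcal{L}}{k}\left(\tfrac{x_2}{x_1}\right)^{\mathcal{L}}x_2^{-k}x_0^k$, expanding $x_1^{-1}\delta\!\left(\tfrac{x_2+x_0}{x_1}\right)=\sum_{m\in\mathbb{Z}}\sum_{l\geq 0}\binom{m}{l}x_2^{m-l}x_1^{-m-1}x_0^l$, and using $Y_{W_1}(v,x_0)w_1=\sum_{n\in\mathbb{Z}}v_nw_1\,x_0^{-n-1}$, the coefficient of $x_0^{-1}$ (which forces $n=l+k$) is
\begin{equation*}
R(x_1,x_2):=\sum_{m\in\mathbb{Z}}\sum_{l,k\geq 0}\binom{m}{l}x_2^{m-l-k}x_1^{-m-1}\,\mathcal{Y}\!\left(\left(\left(\tfrac{x_2}{x_1}\right)^{\mathcal{L}}\binom{\mathcal{L}}{k}u\right)_{l+k}w_1,\,x_2\right).
\end{equation*}
Since $\mathcal{N}^{K+1}u=0$, each $\binom{\mathcal{L}}{k}u$ lies in the fixed finite-dimensional space $\Span\{u,\mathcal{N}u,\dots,\mathcal{N}^K u\}$, so by lower truncation of $Y_{W_1}$ only finitely many pairs $(l,k)$ contribute once this is applied to $w_1$; in particular $R$, and likewise the right-hand sides of~\eqref{eq: Y+/- commutators}, are well defined. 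The factor $\left(\tfrac{x_2}{x_1}\right)^{\mathcal{L}}$ contributes the power $x_1^{-\alpha}$, its nilpotent part contributing only logarithms of $x_1$, so the $x_1$-powers occurring in $R$ are $x_1^{-\alpha-m-1}=x_1^{-n-1}$ with $n=\alpha+m$; thus $R=R^-+R^+$, where $R^-$ collects the $m\geq 0$ terms and $R^+$ the $m<0$ terms, matching the two disjoint families of $x_1$-powers above. Applying the binomial identities
\begin{equation*}
\sum_{m\geq 0}\binom{m}{l}x_2^{m-l}x_1^{-m-1}=(x_1-x_2)^{-1-l},\qquad \sum_{m<0}\binom{m}{l}x_2^{m-l}x_1^{-m-1}=-(-x_2+x_1)^{-1-l}
\end{equation*}
(the first expanded in nonnegative powers of $x_2$, the second in nonnegative powers of $x_1$) and renaming $l$ as $j$ turns $R^-$ and $R^+$ into exactly the right-hand sides of the two claimed identities.

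Finally, equating $\res_{x_0}$ of the two sides of~\eqref{eq:Y Jacobi} gives $[Y^-(u,x_1),\mathcal{Y}(w_1,x_2)]+[Y^+(u,x_1),\mathcal{Y}(w_1,x_2)]=R^-+R^+$, and projecting this identity onto the two complementary families of $x_1$-powers yields $[Y^-(u,x_1),\mathcal{Y}(w_1,x_2)]=R^-$ and $[Y^+(u,x_1),\mathcal{Y}(w_1,x_2)]=R^+$, which are the assertions. The step I expect to require the most care -- the main obstacle, mild as it is -- is this last separation: one must check that all the series in sight (notably $Y^{\pm}(u,x_1)\mathcal{Y}(w_1,x_2)w_2$ and $\mathcal{Y}(w_1,x_2)Y^{\pm}(u,x_1)w_2$ paired against $w_3'$) are well-defined formal series in $x_1,x_2,\log x_1,\log x_2$ with each coefficient a finite sum, so that collecting terms by the power of $x_1$ is a legitimate operation; this follows from lower truncation for $\mathcal{Y}$ and for the twisted modules $W_1,W_2,W_3$ together with the finiteness observation made above.
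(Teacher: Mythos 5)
Your proof is correct and follows essentially the same route as the paper: take $\res_{x_0}$ of the Jacobi identity \eqref{eq:Y Jacobi}, expand $\left(\frac{x_2+x_0}{x_1}\right)^{\mathcal{L}}$ as $\left(\frac{x_2}{x_1}\right)^{\mathcal{L}}\left(1+x_0/x_2\right)^{\mathcal{L}}$, and separate the result according to the disjoint families of $x_1$-powers $x_1^{-\alpha+\mathbb{Z}_{<0}}$ and $x_1^{-\alpha+\mathbb{Z}_{\geq 0}}$. The only cosmetic difference is that the paper packages the binomial expansion of the delta function as $e^{x_0\frac{\partial}{\partial x_2}}\bigl(x_1^{-1}\delta(x_2/x_1)\bigr)$ and then uses $\frac{1}{j!}\left(\frac{\partial}{\partial x_2}\right)^j x_1^{-1}\delta\left(\frac{x_2}{x_1}\right)=(x_1-x_2)^{-1-j}-(-x_2+x_1)^{-1-j}$, whereas you split the sum over $m$ into $m\geq 0$ and $m<0$ directly, which amounts to the same computation.
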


\begin{proof}
From equation \eqref{eq:Y Jacobi}, we have
\begin{align}
&[Y(u,x_1), \mathcal{Y}(w_1, x_2) ] \nonumber\\
&= \res_{x_0} \left(x_0^{-1} \delta \left( \frac{x_1 - x_2}{x_0} \right) Y(u,x_1) \mathcal{Y}(w_1, x_2) - x_0^{-1} \delta \left( \frac{-x_2 + x_1}{x_0} \right) \mathcal{Y}(w_1, x_2) Y(u,x_1) \right) \nonumber\\
&= \res_{x_0} x_1^{-1} \delta \left( \frac{x_2 + x_0}{x_1} \right)  \mathcal{Y}\left( Y \left( \left(\frac{x_2 + x_0}{x_1} \right)^{\mathcal{L} }u,x_0 \right)w_1, x_2 \right) \nonumber\\
&= \res_{x_0} e^{x_0 \frac{\partial}{\partial x_2}} \left( x_1^{-1} \delta \left( \frac{x_2}{x_1} \right) \right)  \mathcal{Y}\left( Y \left( \left(\frac{x_2}{x_1} \right)^{\mathcal{L} } \left( 1 + x_0/x_2 \right)^\mathcal{L} u,x_0 \right)w_1, x_2 \right)\nonumber\\
&= \res_{x_0} \sum_{j \geq 0} \frac{1}{j!} x_0^j \left( \frac{\partial}{\partial x_2} \right)^j \left( x_1^{-1} \delta \left( \frac{x_2}{x_1} \right) \right) \sum_{n \in \mathbb{Z}} \sum_{k \geq 0} \mathcal{Y}\left( \left( \left(\frac{x_2}{x_1} \right)^{\mathcal{L}} {\mathcal{L} \choose k}   u\right)_n w_1, x_2 \right) x_0^k x_2^{-k}x_0^{-n-1} \nonumber\\
&= \sum_{j \geq 0} \sum_{k \geq 0} \frac{1}{j!} \left( \frac{\partial}{\partial x_2} \right)^j \left( x_1^{-1} \delta \left( \frac{x_2}{x_1} \right) \right)   \mathcal{Y}\left( \left( \left(\frac{x_2}{x_1} \right)^{\mathcal{L}} {\mathcal{L} \choose k}   u\right)_{j+k} w_1, x_2 \right)  x_2^{-k} \label{eq:commutator}.
\end{align}
Using
\begin{align*}
 \frac{1}{j!} \left( \frac{\partial}{\partial x_2} \right)^j x_1^{-1} \delta \left( \frac{x_2}{x_1} \right) = (x_1 - x_2)^{-1-j} - (-x_2 + x_1)^{-1-j} ,
\end{align*}
we can separate the terms with $ x_1^{-\alpha + n} $, for $ n \in \mathbb{Z}_{< 0} $, from the terms with $ x_1^{-\alpha + n} $, for $ n \in \mathbb{Z}_{\geq 0} $. Hence, we obtain the desired identities.
\end{proof}

\begin{lemma}
Let $ u \in V^{[\alpha]} $ for $ \alpha \in \mathbb{C} $ with $ \Re(\alpha) \in [0,1) $, and let $ w_1 \in W_1 $. Then 
\begin{equation} \label{eq:u alpha - 1 commutator}
[u_{\alpha-1}, \mathcal{Y}(w_1, x_2) ] = \sum_{k \geq 0} x_2^{-1 - k}  \mathcal{Y} \left(  \left( x_2^\mathcal{L} { \mathcal{L} - 1 \choose k} u \right)_{k} w_1, x_2 \right).
\end{equation}
\end{lemma}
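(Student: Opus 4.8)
The plan is to read off the commutator $[u_{\alpha-1},\mathcal{Y}(w_1,x_2)]$ from the ``$Y_0$-form'' \eqref{eq:Y_0 Jacobi} of the Jacobi identity by taking an appropriate iterated residue, in the same spirit as the derivation of \eqref{eq:take u(-1) out Y_0} but isolating a different coefficient. One could instead start from the commutator formula \eqref{eq: Y+/- commutators}, but the factor $(x_2/x_1)^{\mathcal{L}}$ there carries $\log x_1$'s, making the extraction of the $\log$-free mode $u_{\alpha-1}$ awkward; working directly with \eqref{eq:Y_0 Jacobi} avoids this, since for $u\in V^{[\alpha]}$ the series $x_1^{\alpha}Y_{W_i,0}(u,x_1)$ lies in $(\End W_i)[[x_1,x_1^{-1}]]$ and $u_{\alpha-1}=\res_{x_1}\bigl(x_1^{-1}\,x_1^{\alpha}Y_{W_i,0}(u,x_1)\bigr)$ for $i=2,3$.

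First I would apply $\res_{x_1}(x_1^{-1}\,\cdot\,)\,\res_{x_0}$ to both sides of \eqref{eq:Y_0 Jacobi}. On the left, $\res_{x_0}$ of each of the two $\delta$-functions equals $1$, and then $\res_{x_1}(x_1^{-1}\,\cdot\,)$ turns $x_1^{\alpha}Y_{W_3,0}(u,x_1)$ and $x_1^{\alpha}Y_{W_2,0}(u,x_1)$ into $u_{\alpha-1}$, producing exactly $[u_{\alpha-1},\mathcal{Y}(w_1,x_2)]$; this is the standard ``commutator-from-Jacobi'' step, legitimate by lower truncation just as in the preceding lemmas. On the right, $\res_{x_1}\bigl(x_1^{-2}\delta\bigl((x_2+x_0)/x_1\bigr)\bigr)=(x_2+x_0)^{-1}$ (expanded in nonnegative powers of $x_0$), so we are left with
\begin{equation*}
[u_{\alpha-1},\mathcal{Y}(w_1,x_2)]=\res_{x_0}\Bigl[(x_2+x_0)^{-1}\,\mathcal{Y}\bigl(Y_{W_1}\bigl((x_2+x_0)^{\mathcal{L}}u,x_0\bigr)w_1,x_2\bigr)\Bigr].
\end{equation*}

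Then I would expand $(x_2+x_0)^{\mathcal{L}}u=\sum_{k\geq 0}\binom{\mathcal{L}}{k}x_2^{\mathcal{L}-k}u\,x_0^{k}$, expand $Y_{W_1}$ in modes (using that $W_1$ is untwisted, so these are $\mathbb{Z}$-indexed and $\log$-free), expand $(x_2+x_0)^{-1}=\sum_{\ell\geq 0}(-1)^{\ell}x_2^{-1-\ell}x_0^{\ell}$, and take the coefficient of $x_0^{-1}$. Pulling the scalar $x_2^{-k}$ out of the mode via $\bigl(\binom{\mathcal{L}}{k}x_2^{\mathcal{L}-k}u\bigr)_{m}=x_2^{-k}\bigl(x_2^{\mathcal{L}}\binom{\mathcal{L}}{k}u\bigr)_{m}$ and collecting by the value $m$ of the mode index gives
\begin{equation*}
[u_{\alpha-1},\mathcal{Y}(w_1,x_2)]=\sum_{m\geq 0}x_2^{-1-m}\,\mathcal{Y}\Bigl(\bigl(x_2^{\mathcal{L}}\,\textstyle\sum_{k=0}^{m}(-1)^{m-k}\binom{\mathcal{L}}{k}\,u\bigr)_{m}w_1,x_2\Bigr).
\end{equation*}

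The computational heart is then the binomial identity $\sum_{k=0}^{m}(-1)^{m-k}\binom{x}{k}=\binom{x-1}{m}$, which holds as a polynomial identity in $x$ (it telescopes from Pascal's rule $\binom{x}{k}=\binom{x-1}{k}+\binom{x-1}{k-1}$) and hence remains valid with $x$ replaced by the operator $\mathcal{L}$; applying it collapses the inner sum to $\binom{\mathcal{L}-1}{m}u$, and relabelling $m$ as $k$ yields \eqref{eq:u alpha - 1 commutator}. The only real obstacles are bookkeeping: justifying the iterated residue extraction (the usual lower-truncation argument already used above) and correctly tracking powers of $x_2$ so that the double sum collapses to the single sum via the identity above --- this collapse is exactly where the shift $\mathcal{L}\mapsto\mathcal{L}-1$ in the final formula comes from.
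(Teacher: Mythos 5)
Your proof is correct and takes essentially the same route as the paper: both apply $\res_{x_0}\res_{x_1}x_1^{-1}$ to \eqref{eq:Y_0 Jacobi}, reducing the commutator to $\res_{x_0}\bigl[(x_2+x_0)^{-1}\,\mathcal{Y}\bigl(Y_{W_1}((x_2+x_0)^{\mathcal{L}}u,x_0)w_1,x_2\bigr)\bigr]$. The only difference is cosmetic: the paper absorbs $(x_2+x_0)^{-1}$ into the exponent, expanding $(x_2+x_0)^{\mathcal{L}-1}$ with coefficients $\binom{\mathcal{L}-1}{k}$ directly, while you expand the two factors separately and recover the same coefficient via $\sum_{k=0}^{m}(-1)^{m-k}\binom{\mathcal{L}}{k}=\binom{\mathcal{L}-1}{m}$, an equivalent computation.
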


\begin{proof}
Applying $ \res_{x_0} \res_{x_1} x_1^{-1} $ to both sides of \eqref{eq:Y_0 Jacobi} gives 
\begin{align*}
[u_{\alpha - 1}, \mathcal{Y}(w_1,x_2)] &= \res_{x_0} (x_2 + x_0)^{-1} \mathcal{Y} \left( Y \left( (x_2 + x_0)^\mathcal{L} u ,x_0 \right) w_1, x_2 \right)\\
&= \res_{x_0} \mathcal{Y} \left( Y \left( (x_2 + x_0)^{\mathcal{L}-1} u ,x_0 \right) w_1, x_2 \right)\\
&=  \res_{x_0} \sum_{n \in \mathbb{Z}} \sum_{k \geq 0} \mathcal{Y} \left(  \left( x_2^\mathcal{L} { \mathcal{L} - 1 \choose k} u \right)_n w_1, x_2 \right) x_2^{-1-k} x_0^k x_0^{-n-1} \\ 
&= \sum_{k \geq 0} x_2^{-1 - k}  \mathcal{Y} \left(  \left( x_2^\mathcal{L} { \mathcal{L} - 1 \choose k} u \right)_{k} w_1, x_2 \right),
\end{align*}
our desired result.
\end{proof}

These lemmas will be used in the following section to derive differential equations for \gtype\ intertwining operators among twisted modules satisfying certain finiteness conditions. The special version below will be used to derive a ``twisted KZ equation". We say that an element $ w $ of a twisted module $ W $ is of \emph{lowest weight} if $ u_{n} w = 0 $ for all $ u \in V_+ $ and $ n \in \mathbb{C} $ with $ \Re(n) > 0 $.
\begin{lemma}
Let $ u \in V^{[\alpha]}_{(1)} $ with $ \alpha \in \mathbb{C} $ with $ \Re(\alpha) \in [0,1) $, and let $ w_1 \in W_1 $ be of lowest weight. Then
\begin{equation}\label{eq:take u(-1) out Y - KZ}
\begin{aligned}
\mathcal{Y}( u_{-1} w_1, x_2 ) &= Y^+ (u, x_2)\mathcal{Y}(w_1, x_2)  +  \mathcal{Y}(w_1, x_2) Y^- (u, x_2)  -  x_2^{-1} \mathcal{Y}( (\mathcal{L} u)_{0} w_1 , x_2 ).
\end{aligned},
\end{equation}
\begin{equation}\label{eq: Y+/- commutators - KZ}
\begin{aligned}
[Y^-(u,x_1), \mathcal{Y}(w_1, x_2) ] &= (x_1 - x_2)^{-1} \mathcal{Y}\left( \left( \left(x_2/x_1 \right)^{\mathcal{L}}  u\right)_{0} w_1, x_2 \right) , \\
[Y^+(u,x_1), \mathcal{Y}(w_1, x_2) ] &= (x_2 - x_1)^{-1} \mathcal{Y}\left( \left( \left(x_2/x_1 \right)^{\mathcal{L}}   u\right)_{0} w_1, x_2 \right)  ,
\end{aligned}
\end{equation}
\end{lemma}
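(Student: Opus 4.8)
The plan is to obtain all three displayed identities as immediate specializations of the general identities \eqref{eq:take u(-1) out Y} and \eqref{eq: Y+/- commutators}; the only extra ingredients needed are that $\mathcal{L}$ preserves the conformal weight grading of $V$ and that the lowest-weight hypothesis on $w_1$ truncates the sums appearing there.

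First I would record that $g$ commutes with $L(0)$. Indeed, $g\omega=\omega$ gives $gY(\omega,x)g^{-1}=Y(g\omega,x)=Y(\omega,x)$, hence $gL(n)g^{-1}=L(n)$ for all $n\in\mathbb{Z}$, so $g$ preserves each finite-dimensional space $V_{(n)}$. It follows that $\mathcal{L}=\frac{1}{2\pi\iu}\log g$, its commuting parts $\mathcal{S}$ and $\mathcal{N}$, every operator $\binom{\mathcal{L}}{k}$, and $(x_2/x_1)^{\mathcal{L}}=(x_2/x_1)^{\mathcal{S}}(x_2/x_1)^{\mathcal{N}}$ all preserve the weight grading. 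Hence for $u\in V^{[\alpha]}_{(1)}$, every $V$-coefficient appearing in $\binom{\mathcal{L}}{k}u$ and in $(x_2/x_1)^{\mathcal{L}}\binom{\mathcal{L}}{k}u$ lies in $V_{(1)}\subseteq V_+$; and since $W_1$ is untwisted, all of these act on $W_1$ through integer-indexed modes, so in the definition of lowest weight the condition $\Re(n)>0$ becomes simply $n\geq 1$.

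To prove \eqref{eq:take u(-1) out Y - KZ} I would then apply \eqref{eq:take u(-1) out Y} with this $u$. The mode $\bigl(\binom{\mathcal{L}}{k}u\bigr)_{-1+k}$ of the weight-one element $\binom{\mathcal{L}}{k}u\in V_+$ has index $-1+k$, which is $\geq 1$ precisely when $k\geq 2$; by the lowest-weight property $\bigl(\binom{\mathcal{L}}{k}u\bigr)_{-1+k}w_1=0$ for all such $k$. Thus only $k=0$ (with $\binom{\mathcal{L}}{0}=\id$, contributing $\mathcal{Y}(u_{-1}w_1,x_2)$) and $k=1$ (with $\binom{\mathcal{L}}{1}=\mathcal{L}$, contributing $x_2^{-1}\mathcal{Y}((\mathcal{L}u)_0 w_1,x_2)$) survive on the right-hand side of \eqref{eq:take u(-1) out Y}, and moving the $k=1$ term to the other side yields \eqref{eq:take u(-1) out Y - KZ}. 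For the two commutator identities I would likewise specialize \eqref{eq: Y+/- commutators}: there the relevant mode index is $j+k$, which is $\geq 1$ whenever $(j,k)\neq(0,0)$, so by the same argument only the $(j,k)=(0,0)$ term remains, giving $[Y^-(u,x_1),\mathcal{Y}(w_1,x_2)]=(x_1-x_2)^{-1}\mathcal{Y}\bigl(((x_2/x_1)^{\mathcal{L}}u)_0 w_1,x_2\bigr)$ and $[Y^+(u,x_1),\mathcal{Y}(w_1,x_2)]=-(-x_2+x_1)^{-1}\mathcal{Y}\bigl(((x_2/x_1)^{\mathcal{L}}u)_0 w_1,x_2\bigr)$; rewriting $-(-x_2+x_1)^{-1}=(x_2-x_1)^{-1}$ (each expanded in nonnegative powers of $x_1$) gives \eqref{eq: Y+/- commutators - KZ}.

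I do not anticipate a real obstacle: this lemma is a pure truncation of the earlier, harder results. The only points requiring attention are the bookkeeping that $\mathcal{L}$ is weight-preserving, so that the weight-one hypothesis on $u$ propagates through $\binom{\mathcal{L}}{k}$ and $(x_2/x_1)^{\mathcal{L}}$, and the routine formal-series identity for the sign change in the $Y^+$ commutator.
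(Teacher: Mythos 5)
Your proposal is correct and follows the paper's own (very terse) proof: both simply specialize \eqref{eq:take u(-1) out Y} and \eqref{eq: Y+/- commutators} using the lowest-weight property $u_n w_1 = 0$ for $n>0$, which kills all modes with index $-1+k\geq 1$ resp.\ $j+k\geq 1$, leaving exactly the stated terms. Your extra bookkeeping that $g$ commutes with $L(0)$, so $\binom{\mathcal{L}}{k}u$ and $(x_2/x_1)^{\mathcal{L}}u$ stay in $V_{(1)}\subseteq V_+$, together with the sign rewriting $-(-x_2+x_1)^{-1}=(x_2-x_1)^{-1}$, fills in details the paper leaves implicit but does not change the argument.
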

\begin{proof}
Follows from the equations \eqref{eq:take u(-1) out Y} and \eqref{eq: Y+/- commutators} using $ u_n w_1 = 0 $ when $ n > 0 $.
\end{proof}

\section[Differential equations for intertwining operators among untwisted and twisted modules]{Differential equations for \gtype \ intertwining operators}
\label{sec:DEs}

In this section, we generalize Huang's method in \cite{HuangDEs} to derive differential equations for products of \gtype -intertwining operators. The main idea of his method is to show that differential equations exist as a consequence of certain finiteness conditions.

\begin{definition}
Let $ V $ be a vertex operator algebra with an automorphism $ g $. Given a $ g $-twisted $ V $-module $ W $, define $ C_1(W) $ to be the subspace of $ W $ spanned by the elements of the form $ u_{\alpha - 1,0} w $ for all $ u \in V_{+}^{[\alpha]} $, and $ \alpha \in \mathbb{C} $ with $ \Re(\alpha) \in [0,1) $. We say that $ W $ is \emph{$ C_1 $-cofinite} if $ W $ satisfies the condition $ \dim W/C_1(W) < \infty $. 
\end{definition}

\begin{definition}
Let $ W = \coprod_{n \in \mathbb{C}} W_{[n]} $ be a $ \mathbb{C} $-graded vector space. We say that $ W $ is \emph{discretely graded} (also known as \emph{quasi-finite dimensional}) if 
\begin{equation}\label{eq:discrete graded}
\dim \coprod_{\substack{ n \in \mathbb{C} \\ \Re(n) < r}} W_{[n]} < \infty , \qquad \text{ for all } r \in \mathbb{R}.
\end{equation} 
\end{definition}

\begin{remark}
When $ g = \id_V = 1 $, the above definition of $ C_1 $-cofiniteness reduces to the usual notion of $ C_1 $-cofiniteness (see for example, \cite{HuangDEs}). In this case, there are many well-known examples of $ C_1 $-cofinite discretely graded (untwisted) $ V $-modules. When $ g \neq 1 $, there non-trivial $ C_1 $-cofinite discretely graded $ g $-twisted $ V $-modules, for example, those constructed in \cite{HuangConstructTwisted}.
\end{remark}

We now provide the setting for the current section. Let $ \N \in \mathbb{Z}_{>0} $. Let $ W_1 , \dots, W_{\N} $ be untwisted $ V $-modules, let $ \widetilde{W}_0 , \dots , \widetilde{W}_{\N} $ be $ g $-twisted $ V $-modules. We assume that all modules are $ C_1 $-cofinite and discretely graded. Let $ \mathcal{Y}_i : W_i \otimes \widetilde{W}_i \to \widetilde{W}_{i-1} \{x\} [ \log x ] $ be intertwining operators, for $ i = 1, \dots , \N $. To aid with notation later on, we also denote $ \widetilde{W}_0' $ by $ W_0 $ (a $ g^{-1} $-twisted $ V $-module) and $ \widetilde{W}_{\N} $ by $ W_{\N+1} $.

\begin{figure}[h]
\caption{A diagram summarizing the untwisted and twisted $ V $-modules, the automorphism twisting each module, and the intertwining operators in the chiral correlation function. All intertwining operators are \gtype.} \label{fig:diagram}
\centering

\scalebox{0.8}{
\begin{tikzpicture}[scale=0.5]
	\begin{pgfonlayer}{nodelayer}
		\node [style=none] (0) at (-5, 0) {};
		\node [style=none] (1) at (-3, 0) {};
		\node [style=none] (2) at (-2, 0) {};
		\node [style=none] (3) at (0, 0) {};
		\node [style=none] (4) at (1.75, 0) {};
		\node [style=none] (5) at (3.75, 0) {};
		\node [style=none] (6) at (4.75, 0) {};
		\node [style=none] (7) at (6.75, 0) {};
		\node [style=none] (8) at (3.25, 4) {};
		\node [style=none] (9) at (5.25, 4) {};
		\node [style=none] (19) at (2, 6) {};
		\node [style=none] (20) at (0.25, 6.25) {};
		\node [style=none] (21) at (2, 6) {};
		\node [style=none] (22) at (3, 6) {};
		\node [style=none] (23) at (5, 6) {};
		\node [style=none] (24) at (1.5, 10) {};
		\node [style=none] (25) at (3.5, 10) {};
		\node [style=none] (26) at (-1.25, 10) {};
		\node [style=none] (27) at (0.5, 10) {};
		\node [style=none] (28) at (1.5, 10) {};
		\node [style=none] (29) at (3.5, 10) {};
		\node [style=none] (30) at (0, 14) {};
		\node [style=none] (31) at (2, 14) {};
		\node [style=none] (32) at (-4.5, -1.5) {$W_1$};
		\node [style=none] (33) at (-1.5, -1.5) {$W_2$};
		\node [style=none] (34) at (2.25, -1.5) {$W_\N$};
		\node [style=none] (35) at (6.25, -1.5) {$W_{\N+1} = \widetilde{W}_\N$};
		\node [style=none] (36) at (-4.5, -2.75) {$1$};
		\node [style=none] (37) at (-1.5, -2.75) {$1$};
		\node [style=none] (38) at (2.25, -2.75) {$1$};
		\node [style=none] (39) at (6.25, -2.75) {$g$};
		\node [style=none] (40) at (7.25, 4) {$\widetilde{W}_{\N-1}$};
		\node [style=none] (41) at (6.25, 6) {$\widetilde{W}_2$};
		\node [style=none] (42) at (5, 10) {$\widetilde{W}_1$};
		\node [style=none] (43) at (0.25, 15.75) {$W_0' = \widetilde{W}_0$};
		\node [style=none] (44) at (8.75, 4) {$g$};
		\node [style=none] (45) at (7.5, 6) {$g$};
		\node [style=none] (46) at (6.25, 10) {$g$};
		\node [style=none] (47) at (3, 15.75) {$g$};
		\node [style=none] (48) at (1, 0) {$\cdots$};
		\node [style=none] (49) at (4, 5.25) {$\vdots$};
		\node [style=none] (50) at (1, 12.25) {$\mathcal{Y}_1$};
		\node [style=none] (51) at (2.5, 8.25) {$\mathcal{Y}_2$};
		\node [style=none] (52) at (4.25, 2.5) {$\mathcal{Y}_\N$};
	\end{pgfonlayer}
	\begin{pgfonlayer}{edgelayer}
		\draw [style=plainedge, bend right=90, looseness=0.75] (6.center) to (7.center);
		\draw [style=plainedge, bend right=90, looseness=0.75] (4.center) to (5.center);
		\draw [style=plainedge, bend right=90, looseness=0.75] (2.center) to (3.center);
		\draw [style=plainedge, bend right=90, looseness=0.75] (0.center) to (1.center);
		\draw [style=dotted line, bend left=90, looseness=0.75] (6.center) to (7.center);
		\draw [style=dotted line, bend left=90, looseness=0.75] (4.center) to (5.center);
		\draw [style=dotted line, bend left=90, looseness=0.75] (2.center) to (3.center);
		\draw [style=dotted line, bend left=90, looseness=0.75] (0.center) to (1.center);
		\draw [style=plainedge, bend right=90, looseness=0.75] (8.center) to (9.center);
		\draw [style=plainedge, bend left=270, looseness=0.75] (9.center) to (8.center);
		\draw [style=plainedge, in=105, out=-75, looseness=0.50] (9.center) to (7.center);
		\draw [style=plainedge, in=105, out=75, looseness=4.00] (5.center) to (6.center);
		\draw [style=plainedge, in=-90, out=90, looseness=0.25] (4.center) to (8.center);
		\draw [style=plainedge, in=90, out=-105, looseness=0.25] (19.center) to (3.center);
		\draw [style=plainedge, bend right=90, looseness=0.75] (22.center) to (23.center);
		\draw [style=dotted line, bend left=90, looseness=0.75] (22.center) to (23.center);
		\draw [style=plainedge, bend right=90, looseness=0.75] (24.center) to (25.center);
		\draw [style=dotted line, bend left=270, looseness=0.75] (25.center) to (24.center);
		\draw [style=plainedge, in=105, out=-75, looseness=0.75] (25.center) to (23.center);
		\draw [style=plainedge, in=105, out=75, looseness=3.25] (21.center) to (22.center);
		\draw [style=plainedge, in=-90, out=75, looseness=0.50] (20.center) to (24.center);
		\draw [style=plainedge, bend right=90, looseness=0.75] (30.center) to (31.center);
		\draw [style=plainedge, in=105, out=-75, looseness=0.75] (31.center) to (29.center);
		\draw [style=plainedge, in=105, out=75, looseness=3.00] (27.center) to (28.center);
		\draw [style=plainedge, in=-90, out=60, looseness=0.25] (26.center) to (30.center);
		\draw [style=plainedge, in=90, out=-105, looseness=0.25] (20.center) to (2.center);
		\draw [style=plainedge, in=-105, out=90, looseness=0.25] (1.center) to (27.center);
		\draw [style=plainedge, in=-105, out=90, looseness=0.25] (0.center) to (26.center);
		\draw [style=plainedge, bend left=90, looseness=0.75] (30.center) to (31.center);
	\end{pgfonlayer}
\end{tikzpicture}
}
\end{figure}
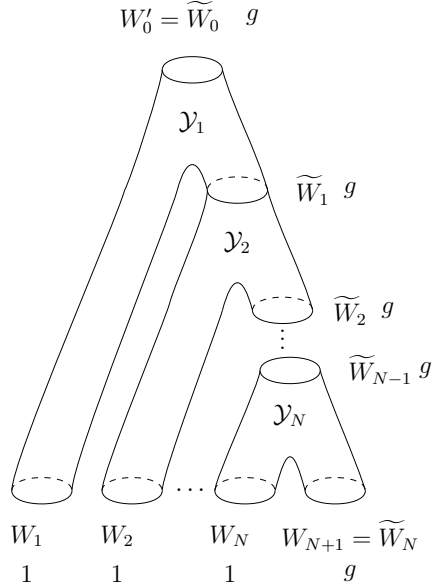

Let $ A $ be the additive subgroup of $ \mathbb{C} $ generated by the union of $ \{1\} $ and the set of eigenvalues for $ \mathcal{S} $. 
Let $ R $ be the subring of
\begin{align*}
\mathbb{C} \{x_1, \dots, x_\N\}[(x_i - x_j)^{-1}:i,j = 1, \dots ,\N , \ i < j] [\log x_1, \dots, \log x_\N]
\end{align*}
generated by $ \{ x_i^\alpha , (x_i - x_j)^{-1} , \log x_i : i,j = 1, \dots ,\N , \ i < j, \  \alpha \in A \} $. We will write the elements of $ R $ in function notation as $ f(x_1, \dots, x_\N) $ with explicit dependence on $ x_1, \dots, x_\N $ only. 

In the case that $ g $ acts semisimply on $ V $, we can remove all of the $ \log x_i $ from $ R $. This occurs in the important case that $ g $ has finite order (see Section \ref{sec:regular singularities}). \\

We have the map
\begin{equation}
\iota_{|x_1| > \dots > |x_\N|}: R \to \mathbb{C} \{x_1, \dots, x_\N\}[\log x_1, \dots, \log x_\N]  
\end{equation}
that expands $ (x_i-x_j)^{-1} $ in non-negative powers of $ x_j $ when $ j > i $. \\

Note that $ R $ is Noetherian if $ A $ is finitely generated. For example, if $ g $ has finite order $ t $, then $ A = \frac{1}{t} \mathbb{Z} $. This will be important in Section \ref{sec:regular singularities}.

In the case that $ V $ is finitely generated, we can project the finitely many generators onto the subspaces $ V_{(n)} $ and consider the finitely many subspaces $ V_{(n)} $ where the image of the projection is non-trivial. The automorphism $ g $ decomposes these $ V_{(n)} $ into eigenspaces for $ \mathcal{S} $. We can then take a basis of eigenvectors for $ \mathcal{S} $ in each of these subspaces. Now we have a finite generating set for $ V $ consisting of eigenvectors for $ \mathcal{S} $. Hence, $ A $ is finitely generated when $ V $ is finitely generated.  \\

Consider the $ R $-module
\begin{equation}
T = R \otimes W_0 \otimes \dots \otimes W_{\N+1}.
\end{equation}
Since $ T $ is isomorphic as an $ R $-module to $ (R \otimes W_0) \otimes_R \dots \otimes_R (R \otimes W_{\N+1}) $, we will sometimes write elements in $ T $ as 
\begin{align*}
( f_0(x_1,\dots,x_\N) w_0 ) \otimes \dots \otimes (f_{\N+1}(x_1,\dots,x_\N) w_{\N+1} )
\end{align*} 
in place of 
\begin{align*}
f_0(x_1,\dots,x_\N) \cdots f_{\N+1}(x_1,\dots,x_\N) \otimes w_0 \otimes \dots \otimes  w_{\N+1}. 
\end{align*} 

The $ \mathbb{C} $-gradings by conformal weight on $ W_0,\dots , W_{\N+1} $, together with the trivial grading on $ R $, induce a grading on $ T $ called the \emph{weight}. We denote the subspace spanned by all homogeneous elements of weight $ n \in \mathbb{C} $ with $ \Re(n) = r $ by $ T_{(r)} $ so that $ T = \coprod_{r\in \mathbb{R}} T_{(r)} $. Observe that the discrete gradings on $ W_0, \dots, W_{\N+1} $, ensure that $ W_0 \otimes \cdots \otimes W_{\N+1} $ is a discretely graded vector space. Hence, each subspace 
\begin{equation}
F_r(T) = \coprod_{s \in \mathbb{R}_{\leq r}} T_{(s)}
\end{equation} is a finitely-generated $ R $-module. Hence, we have a filtration $ F_r(T) \subseteq F_s(T) $, for $ r \leq s $, of finitely-generated submodules of $ T $. (The chosen grading provides a filtration of finitely-generated submodules, whereas the grading $ \wt (w_0 \otimes \cdots \otimes w_{\N+1} ) = -\wt w_0 + \cdots + \wt w_{\N+1} $ does not.) \\

Define the map
\begin{equation}
\begin{gathered}
\phi : T \to \mathbb{C}\{x_1, \dots, x_\N\}[\log x_1, \dots, \log x_\N ], \\
\begin{aligned}
&\phi( f(x_1,\dots,x_\N) \otimes w_0 \otimes \dots \otimes w_{\N+1})  \\
&\qquad = \iota_{|x_1| > \dots > |x_\N|} f(x_1,\dots,x_\N) \langle w_0, \mathcal{Y}_1(w_1,x_1) \cdots \mathcal{Y}_\N(w_\N,x_\N) w_{\N+1} \rangle
\end{aligned}
\end{gathered}
\end{equation}
that evaluates elements in $ T $ as matrix coefficients using $ \mathcal{Y}_1 , \dots, \mathcal{Y}_\N $. We seek to define a submodule $ J $ of $ T $ that lies in the kernel of $ \phi $, satisfying certain properties to be discussed later in Lemma \ref{lem:grade shift}.\\  

Given $ u \in V^{[\alpha]} $ for $ \alpha \in \mathbb{C} $ with $ \alpha \in [0,1) $, we use the notation
\begin{equation}
\alpha' = \begin{cases}
- \alpha &\text{if } \Re(\alpha) = 0, \\
1 - \alpha &\text{if } \Re(\alpha) > 0.
\end{cases}
\end{equation}
This is the eigenvalue of $ \mathcal{S}_{g^{-1}} $ corresponding to $ u $. We also drop the subscripts from $ Y_{W} $ and $ \mathcal{Y}_i $, which can be deduced by looking at the subscripts of their arguments. \\

We now define elements that will generate the submodule $ J $ of $ T $. Let $ u \in V^{[\alpha]} $ and $ w_i \in W_i $ for $ i = 0, \dots, \N+1 $. Then for $ \el = 1, \dots, \N $, by \eqref{eq:take u(-1) out Y} and \eqref{eq: Y+/- commutators}, we observe that
\begin{align*}
0 &= \langle w_0, \mathcal{Y}(w_1,x_1) \cdots \Big( Y^+ (u, x_\el)\mathcal{Y}(w_\el, x_\el)  +  \mathcal{Y}(w_\el, x_\el) Y^- (u, x_\el) \\
&\quad - \sum_{k \geq 0} x_\el^{- k} \mathcal{Y}\left( \left( {\mathcal{L} \choose k}   u  \right)_{-1+k} w_\el , x_\el \right) \Big) \cdots \mathcal{Y}(w_\N,x_\N) w_{\N+1} \rangle \\
&= \langle w_0, Y^+ (u, x_\el) \mathcal{Y}(w_1,x_1) \cdots \mathcal{Y}(w_\N,x_\N) w_{\N+1} \rangle \\ 
&+ \sum_{\substack{p=1,\dots,\N \\ p \neq \el}} \langle w_0, \mathcal{Y}(w_1,x_1) \cdots \sum_{j,k \geq 0} :(x_\el - x_p)^{-1-j}:  \\
& \qquad \qquad x_p^{-k} \mathcal{Y}\left( \left( \left(\frac{x_p}{x_\el} \right)^{\mathcal{L}} {\mathcal{L} \choose k}   u\right)_{j+k} w_p, x_p \right)  \cdots \mathcal{Y}(w_\N,x_\N) w_{\N+1} \rangle \\
&-\langle w_0, \mathcal{Y}(w_1,x_1) \cdots \sum_{k \geq 0} x_\el^{- k} \mathcal{Y}\left( \left( {\mathcal{L} \choose k}   u  \right)_{-1+k} w_\el , x_\el \right) \cdots \mathcal{Y}(w_\N,x_\N) w_{\N+1} \rangle \\
&+\langle w_0, \mathcal{Y}(w_1,x_1) \cdots \mathcal{Y}(w_\N,x_\N) Y^- (u, x_\el)w_{\N+1} \rangle ,
\end{align*}
where we use the notation 
\begin{align*}
\colon (x_\el - x_p)^{-1-j} \colon  :=  \iota_{|x_1| > \dots > |x_\N|}(x_\el - x_p)^{-1-j} = \begin{cases}
(x_\el - x_p)^{-1-j} & \text{if } \el < p, \\
(-x_p + x_\el)^{-1-j} & \text{if } p < \el.
\end{cases}
\end{align*}
Hence, we define
\begin{equation}
\begin{aligned}
&\mathcal{A}_\el(u,w_0,\dots, w_{\N+1}) \\
&\quad = -\sum_{j \geq 0} \sum_{k = 0}^K x_\el^{-\alpha + j} (\log x_\el)^k u_{\alpha - 1 - j,k}^* w_0 \otimes w_1 \otimes \dots \otimes w_{\N+1} \\
& \qquad - \sum_{\substack{p=1,\dots,\N \\ p \neq \el}} \sum_{j,k \geq 0} (x_\el - x_p)^{-1-j}  x_p^{-k} w_0 \otimes \dots \otimes \left( \left(\frac{x_p}{x_\el} \right)^{\mathcal{L}} {\mathcal{L} \choose k}   u\right)_{j+k} w_p \otimes \dots \otimes w_{\N+1} \\
& \qquad +\sum_{k \geq 0} x_\el^{- k}  w_0  \otimes \dots \otimes \left( {\mathcal{L} \choose k}   u  \right)_{-1+k} w_\el \otimes \dots \otimes w_{\N+1}  \\
& \qquad - \sum_{j \geq 0} \sum_{k = 0}^K x_\el^{-\alpha - j - 1}(\log x_\el)^k
w_0 \otimes \dots \otimes w_\N \otimes u_{\alpha + j, k} w_{\N+1},
\end{aligned}
\end{equation}
where $ u_{n,k}^* $ denotes the adjoint of an operator $ u_{n,k} $. Observe that the lower truncation property for the modules keeps the sums finite, hence $ \mathcal{A}_\el(u,w_0,\dots, w_{\N+1}) $ is indeed an element of $ T $ in the kernel of $ \phi $. By \eqref{eq:u alpha - 1 commutator}, we have
\begin{align*}
&\langle w_0, u_{\alpha-1} \mathcal{Y}(w_1,x_1) \cdots \mathcal{Y}(w_\N,x_\N) 
w_{\N+1} \rangle - \langle w_0, \mathcal{Y}(w_1,x_1) \cdots \mathcal{Y}(w_\N,x_\N) u_{\alpha-1}
w_{\N+1} \rangle \\
&\quad =  \sum_{p=1}^\N \langle w_0, \mathcal{Y}(w_1,x_1) \dots \sum_{k \geq 0} x_p^{-1 - k}  \mathcal{Y} \left(  \left( x_p^\mathcal{L} { \mathcal{L} - 1 \choose k} u \right)_{k} w_p, x_p \right) \cdots \mathcal{Y}(w_\N,x_\N) w_{\N+1} \rangle .
\end{align*}
Hence, we define
\begin{equation}
\begin{aligned}
&\mathcal{A}_{\N+1}(u,w_0,\dots, w_{\N+1}) \\
&\quad = -u_{\alpha - 1}^* w_0 \otimes w_1 \otimes  \dots \otimes w_{\N+1} \\
& \qquad + \sum_{p =1}^\N \sum_{k \geq 0} x_p^{-1 - k}  w_0 \otimes \dots \otimes  \left( x_p^\mathcal{L} { \mathcal{L} - 1 \choose k} u \right)_{k} w_p \otimes \dots \otimes w_{\N+1}\\
&\qquad + w_0 \otimes \dots \otimes w_\N \otimes u_{\alpha-1} w_{\N+1} ,
\end{aligned}
\end{equation}
which is an element of $ T $ in the kernel of $ \phi $. Recall that, by Theorem 6.1 in \cite{HUANG2018346}, we have the intertwining operators $ A_{\pm} (\mathcal{Y}) $ defined by
\begin{align*}
\langle A_{\pm} (\mathcal{Y})(w_1,x)w_3', w_2 \rangle = \langle w_3', \mathcal{Y}(e^{x L(1)}e^{\pm \pi \iu L(0)}(x^{-L(0)})^2 w_1,x^{-1}) w_2 \rangle,
\end{align*}
which is equivalent to 
\begin{align*}
\langle A_{\pm} (\mathcal{Y})((x^{-L(0)})^{2}e^{\mp \pi \iu L(0)}e^{-x^{-1} L(1)} w_1,x^{-1})w_3', w_2 \rangle = \langle w_3', \mathcal{Y}( w_1,x) w_2 \rangle.
\end{align*}
We define the operators
\begin{equation}
\begin{gathered}
\Op_{\pm}(x), \Op_{\pm}^{-1}(x) : W\{x\} [ \log x] \to  W\{x\} [ \log x], \\
\Op_{\pm}(x)w = e^{x L(1)}e^{\pm \pi \iu L(0)}(x^{-L(0)})^2 w, \qquad \Op_{\pm}^{-1}(x)w = (x^{L(0)})^2 e^{\mp \pi \iu L(0)} e^{-x L(1)}w,
\end{gathered}
\end{equation}
so we can briefly write 
\begin{align*}
\langle A_{\pm} (\mathcal{Y})(w_1,x)w_3', w_2 \rangle &= \langle w_3', \mathcal{Y}(\Op_\pm (x) w_1,x^{-1}) w_2 \rangle, \\
\langle A_{\pm} (\mathcal{Y})(\Op_\pm^{-1}(x^{-1}) w_1,x^{-1})w_3', w_2 \rangle &= \langle w_3', \mathcal{Y}( w_1,x) w_2 \rangle.
\end{align*}
By \eqref{eq:u alpha - 1 commutator}, we have
\begin{align*}
[u_{\alpha'-1}, A_\pm (\mathcal{Y})(\Op_{\pm}^{-1}(x_p^{-1})w_p, x_p^{-1}) ] &= \sum_{k \geq 0} x_p^{1 + k}  A_\pm(\mathcal{Y}) \left(  \left( x_p^{-\mathcal{L}} { \mathcal{L} - 1 \choose k} u \right)_{k} \Op_{\pm}^{-1}(x_p^{-1}) w_p, x_p^{-1} \right).
\end{align*}
So,
\begin{align*}
&\langle u_{\alpha' - 1} w_0, \mathcal{Y}(w_1,x_1) \cdots \mathcal{Y}(w_\N,x_\N) w_{\N+1} \rangle  \\
&= \langle A_{\pm} (\mathcal{Y})(\Op^{-1}_\pm(x_\N^{-1}) w_\N, x_\N^{-1}) \cdots A_{\pm} (\mathcal{Y})(\Op^{-1}_\pm(x_1^{-1}) w_1,x_1^{-1}) u_{\alpha' - 1} w_0, w_{\N+1} \rangle \\
&= \langle u_{\alpha' - 1} A_{\pm} (\mathcal{Y})(\Op^{-1}_\pm(x_\N^{-1}) w_\N, x_\N^{-1}) \cdots A_{\pm} (\mathcal{Y})(\Op^{-1}_\pm(x_1^{-1}) w_1,x_1^{-1})  w_0, w_{\N+1} \rangle \\
&\quad - \sum_{p=1}^\N  \sum_{k \geq 0} x_p^{1 + k} \langle A_{\pm} (\mathcal{Y}_\N)(\Op^{-1}_\pm(x_\N^{-1}) w_\N, x_\N^{-1}) \cdots \\
&\quad  A_\pm(\mathcal{Y}) \left(  \left( x_p^{-\mathcal{L}} { \mathcal{L} - 1 \choose k} u \right)_{k} \Op_{\pm}^{-1}(x_p^{-1}) w_p, x_p^{-1} \right) \cdots A_{\pm} (\mathcal{Y}_1)(\Op^{-1}_\pm(x_1^{-1}) w_1,x_1^{-1}) w_0, w_{\N+1} \rangle \\
&= \langle w_0,  \mathcal{Y}_1(w_1,x_1) \cdots \mathcal{Y}_\N (w_\N,x_\N) u_{\alpha'-1}^* w_{\N+1} \rangle \\
&\quad - \sum_{p=1}^\N \sum_{k \geq 0} x_p^{1 + k} \langle w_0,  \mathcal{Y}_1(w_1,x_1) \cdots \\
&\quad  \mathcal{Y} \left( \Op_\pm(x_p^{-1}) \left( x_p^{-\mathcal{L}} { \mathcal{L} - 1 \choose k} u \right)_{k} \Op_{\pm}^{-1}(x_p^{-1}) w_p, x_p \right) \cdots \mathcal{Y}_\N(w_\N,x_\N) w_{\N+1} \rangle .
\end{align*}
Hence, we define
\begin{equation}
\begin{aligned}
&\mathcal{A}_0(u,w_0, \dots, w_{\N+1}) \\
& \quad = u_{\alpha'-1} w_0 \otimes w_1 \otimes \dots \otimes w_{\N+1} \\
& \qquad + \sum_{p=1}^\N \sum_{k \geq 0} x_p^{1+k} w_0 \otimes \dots \otimes \Op_\pm (x_p^{-1}) \left( x_p^{-\mathcal{L}} {\mathcal{L}-1 \choose k} u\right)_{k} \Op_{\pm}^{-1}(x_p^{-1})w_p \otimes \dots \otimes w_{\N+1} \\
&\qquad - w_0 \otimes \dots \otimes w_\N \otimes u_{\alpha'-1}^* w_{\N+1},
\end{aligned}
\end{equation}
which is an element of $ T $ in the kernel of $ \phi $. \\

Define $ J $ to be the $ R $-submodule of $ T $ generated by $ \mathcal{A}_j(u,w_0, \dots,w_{\N+1}) $ for all $ u \in V_{+}^{[\alpha]} $, $ \alpha \in \mathbb{C} $ with $ \Re(\alpha) \in [0,1) $, and all $ w_i \in W_i $, $ i = 0, \dots, \N+1 $, and for all $ j = 0, \dots , \N+1 $. Define $ F_r(J) = F_r(T) \cap J $ so that we have the filtration $ F_r(J) \subseteq F_s(J) $, $ r \leq s $.

The following lemma explains why we have chosen the generators $ \mathcal{A}_{i}(u,w_0, \dots, w_{\N+1} ) $ to  define $ J $.

\begin{lemma}\label{lem:grade shift}
Let $ u \in V^{[\alpha]}_+ $, for $ \Re(\alpha) \in [0,1) $, $ w_i \in W_i $ for $ i = 0, \dots, \N+1 $, be weight homogeneous. 
If $ \Re ( \wt u_{\alpha' - 1} w_0 \otimes \dots \otimes w_{\N+1} ) = s $, then $ u_{\alpha' - 1} w_0 \otimes \dots \otimes w_{\N+1} \in F_s(J) + F_r(T) $ for some $ r < s $. For $ p = 1, \dots, \N $, if $ \Re( \wt w_0 \otimes \dots \otimes u_{-1} w_p \otimes \dots \otimes w_{\N+1}) = s $, then $ w_0 \otimes \dots \otimes u_{-1} w_p \otimes \dots \otimes w_{\N+1}\in F_s(J) + F_r(T) $ for some $ r < s $. If $ \Re( \wt w_0 \otimes \dots \otimes u_{\alpha - 1} w_{\N+1}) = s $, then $  w_0 \otimes \dots \otimes u_{\alpha - 1} w_{\N+1} \in F_s(J) + F_r(T) $ for some $ r < s $. 
\end{lemma}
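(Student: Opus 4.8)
The plan is to prove all three assertions the same way: each distinguished tensor is the ``leading term'' of a single generator of $J$. For $u_{\alpha'-1}w_0 \otimes \cdots \otimes w_{\N+1}$ I will use $\mathcal{A}_0(u,w_0,\dots,w_{\N+1})$; for $w_0 \otimes \cdots \otimes u_{-1}w_p \otimes \cdots \otimes w_{\N+1}$ I will use $\mathcal{A}_p(u,w_0,\dots,w_{\N+1})$; and for $w_0 \otimes \cdots \otimes u_{\alpha-1}w_{\N+1}$ I will use $\mathcal{A}_{\N+1}(u,w_0,\dots,w_{\N+1})$. The first step is to decompose the chosen generator $\mathcal{A}_j$ into weight-homogeneous components and show that the distinguished tensor is the \emph{unique} component of maximal real weight, every other component having strictly smaller real weight. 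Granting that, the lemma follows formally: with $s$ the largest real weight occurring in $\mathcal{A}_j$, we have $\mathcal{A}_j \in F_s(T)$, hence $\mathcal{A}_j \in F_s(T) \cap J = F_s(J)$; since lower truncation makes each $\mathcal{A}_j$ a genuine finite element of $T$, the remaining summands of $\mathcal{A}_j$ lie in $F_r(T)$ for some $r < s$ (take $r$ to be the largest of their finitely many real weights); and subtracting them from $\mathcal{A}_j$ exhibits the distinguished tensor in $F_s(J) + F_r(T)$.

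The rest is a weight computation, which I would base on four observations. Since $R$ carries the trivial grading, the weight of a summand is just the $L(0)$-weight of its tensor of module vectors, so the $R$-valued coefficients (powers of $x_i$, of $x_i-x_j$, of $\log x_i$, the factors $x_p^{\pm(1+k)}$, $x_\el^{-k}$, and so on) are irrelevant. Because $g\omega = \omega$, the operator $\mathcal{L}$ commutes with $L(0)$, hence so do $\mathcal{S}$, $\mathcal{N}$, the binomial operators $\binom{\mathcal{L}}{k}$ and $\binom{\mathcal{L}-1}{k}$, and the formal powers $x^{\mathcal{L}}$; thus $\binom{\mathcal{L}}{k}u$, $x^{\mathcal{L}}u$, etc., all have $L(0)$-weight $m := \wt u$, which is $\geq 1$ since $u \in V_+$. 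A mode $v_n$ of a weight-$m$ vector shifts $L(0)$-weight by $m-n-1$, so every mode with nonnegative index shifts weight by at most $m-1$, and the transpose of a mode of positive real weight shift is strictly weight-decreasing. Finally, each of $\Op_{\pm}(x)$ and $\Op_{\pm}^{-1}(x)$ is a product of a factor $e^{cxL(1)}$ ($c=\pm1$), a factor $e^{\pm\pi\iu L(0)}$ and a factor $x^{\mp2L(0)}$, of which only the first changes $L(0)$-weight, and only downward, so conjugation by $\Op_{\pm}(x_p^{-1})$ never raises weight.

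With these facts in place the check is routine. In $\mathcal{A}_0$, the distinguished summand $u_{\alpha'-1}w_0 \otimes w_1 \otimes \cdots$ shifts weight by $m-\alpha'$, of real part $m-\Re(\alpha')$; the interior summands involve the modes $\bigl(x_p^{-\mathcal{L}}\binom{\mathcal{L}-1}{k}u\bigr)_k$ of index $k\geq0$, hence weight shift $\leq m-1$ even after the weight-nonincreasing conjugation by $\Op_{\pm}(x_p^{-1})$; and the boundary summand involving $u_{\alpha'-1}^{*}w_{\N+1}$ is strictly weight-decreasing. Since $\Re(\alpha')\in[0,1)$ gives $m-1<m-\Re(\alpha')$, the distinguished summand strictly dominates. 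In $\mathcal{A}_p$ (taking $\el=p$), the distinguished summand is the $k=0$ term of the third group, $w_0 \otimes \cdots \otimes u_{-1}w_p \otimes \cdots$, of weight shift exactly $m$; the third-group terms with $k\geq1$, the modes $\bigl((x_p/x_\el)^{\mathcal{L}}\binom{\mathcal{L}}{k}u\bigr)_{j+k}$ of the second group, and the modes $u_{\alpha+j,k}$ of the fourth group all have nonnegative index, hence weight shift $\leq m-1$; and the adjoint modes $u^{*}_{\alpha-1-j,k}$ of the first group are strictly weight-decreasing. In $\mathcal{A}_{\N+1}$, the distinguished summand $w_0 \otimes \cdots \otimes u_{\alpha-1}w_{\N+1}$ shifts weight by $m-\alpha$, of real part $m-\Re(\alpha)$; the modes $\bigl(x_p^{\mathcal{L}}\binom{\mathcal{L}-1}{k}u\bigr)_k$ of the second group have index $k\geq0$, hence weight shift $\leq m-1<m-\Re(\alpha)$ (using $\Re(\alpha)\in[0,1)$); and $u^{*}_{\alpha-1}$ of the first group is strictly weight-decreasing. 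In each case the distinguished summand is the unique component of maximal real weight.

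The one point I expect to require care is the third and fourth observations as applied to the boundary terms: one must confirm that the transposed modes $u^{*}_{n,k}$ and the $\Op_{\pm}$-conjugated modes genuinely cannot raise $L(0)$-weight. For $u^{*}_{n,k}$ this should be read off from Theorem 6.1 of \cite{HUANG2018346} and the operators $A_{\pm}(\mathcal{Y})$ — the contragredient pairing introduces only $e^{xL(1)}$-type corrections, which lower weight — and for the $\Op_{\pm}$-conjugation it is the factorization noted above; once those are pinned down, everything else is bookkeeping with mode indices together with the elementary bound $\Re(\alpha),\Re(\alpha')\in[0,1)$, which is exactly what separates the distinguished term from the rest.
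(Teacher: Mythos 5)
Your proposal is correct and follows essentially the same route as the paper: each distinguished tensor is isolated as the unique summand of maximal real weight in the corresponding generator $\mathcal{A}_0$, $\mathcal{A}_p$, or $\mathcal{A}_{\N+1}$, using that modes of nonnegative index shift weight by at most $\wt u - 1 < \wt u - \Re(\alpha'), \wt u, \wt u - \Re(\alpha)$, that adjoint modes strictly lower weight, and that $\wt \mathcal{L} = 0$, $\wt L(1) = -1$ make the $\Op_\pm$-conjugation and $R$-coefficients weight-nonincreasing. Your explicit packaging of the formal step ($\mathcal{A}_j \in F_s(T) \cap J = F_s(J)$, finiteness of the sums via lower truncation, remaining terms in $F_r(T)$ with $r < s$) is exactly what the paper's proof does implicitly.
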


\begin{proof}
We use $ \sigma $ to denote $ \Re (\wt w_0 + \dots + \wt w_{\N+1}) $. Let
\begin{align*}
s = \Re (\wt u_{\alpha' - 1} w_0 \otimes \dots \otimes w_{\N+1}) &= \wt u - \Re (\alpha') + \sigma.
\end{align*}
Then
\begin{align*}
\Re( \wt  w_0 \otimes \dots \otimes u_{\alpha' - 1}^* w_{\N+1}) &=  - \wt u + \Re(\alpha') + \sigma < \sigma < s , \\
\Re( \wt  w_0 \otimes \dots \otimes u_k w_p \otimes \dots \otimes w_{\N+1}) &= \wt u - k - 1 + \sigma \leq \wt u  -1 + \sigma \\
&< \wt u - \Re (\alpha') + \sigma = s,
\end{align*}
for all $ k \geq 0 $ and $ p = 1, \dots, \N $. Since $ \wt \mathcal{L} = 0 $ and $ \wt L(1) = - 1 $, the real part of the weight of each homogeneous component of $ x_p^{1+k} w_0 \otimes \dots \otimes \Op_\pm (x_p^{-1}) \left( x_p^{-\mathcal{L}} {\mathcal{L}-1 \choose k} u\right)_{k} \Op_{\pm}^{-1}(x_p^{-1})w_p \otimes \dots \otimes w_{\N+1} $ is no more than $ \Re( \wt  w_0 \otimes \dots u_k w_p \otimes \dots \otimes w_{\N+1}) $, which is less than $ s $. Hence
\begin{align*}
&u_{\alpha' - 1} w_0 \otimes \dots \otimes w_{\N+1} \\
& \quad = \mathcal{A}_0(u,w_0, \dots, w_{\N+1})  \\
& \qquad - \sum_{p=1}^\N \sum_{k \geq 0} x_p^{1+k} w_0 \otimes \dots \otimes \Op_\pm (x_p^{-1}) \left( x_p^{-\mathcal{L}} {\mathcal{L}-1 \choose k} u\right)_{k} \Op_{\pm}^{-1}(x_p^{-1})w_p \otimes \dots \otimes w_{\N+1} \\
& \qquad + w_0 \otimes \dots \otimes u_{\alpha'-1}^* w_{\N+1} \\
& \quad \in F_s(J) + F_r(T) ,
\end{align*}
for some $ r < s $. Now let 
\begin{align*}
s = \Re ( \wt w_0 \otimes \dots \otimes u_{-1} w_p \otimes \dots \otimes w_{\N+1} ) = \wt u + \sigma .
\end{align*}
Then
\begin{align*}
\Re ( \wt u_{\alpha -1-j,k}^* w_0 \otimes w_1 \otimes \dots \otimes w_{\N+1} ) &= \Re (\alpha) - j - \wt u + \sigma < \sigma < s , \\
\Re ( \wt w_0 \otimes \dots \otimes u_k w_p \otimes \dots \otimes w_{\N+1} ) &= \wt u - k - 1 + \sigma \leq \sigma < s, \\
\Re ( \wt w_0 \otimes \dots \otimes w_\N \otimes u_{\alpha + j,k} w_{\N+1} ) &= \wt u - \Re(\alpha) - j - 1 + \sigma < \wt u + \sigma = s,
\end{align*}
for all $ j, k \geq 0 $ and $ p = 1, \dots, \N $. Similarly to before, since $ \wt \mathcal{L} = 0 $, we have
\begin{align*}
& w_0  \otimes \dots \otimes u_{-1} w_\el \otimes \dots \otimes w_{\N+1}  \\
&\quad =  \mathcal{A}_\el(u,w_0,\dots, w_{\N+1}) \\
&\qquad + \sum_{j \geq 0} \sum_{k = 0}^K x_\el^{-\alpha + j} (\log x_\el)^k u_{\alpha - 1 - j,k}^* w_0 \otimes w_1 \otimes \dots \otimes w_{\N+1} \\
& \qquad + \sum_{\substack{p=1,\dots,\N \\ p \neq \el}} \sum_{j,k \geq 0} :(x_\el - x_p)^{-1-j}:  x_p^{-k} w_0 \otimes \dots \otimes \left( \left(\frac{x_p}{x_\el} \right)^{\mathcal{L}} {\mathcal{L} \choose k}   u\right)_{j+k} w_p \otimes \dots \otimes w_{\N+1} \\
& \qquad -\sum_{k > 0} x_\el^{- k}  w_0  \otimes \dots \otimes \left( {\mathcal{L} \choose k}   u  \right)_{-1+k} w_\el \otimes \dots \otimes w_{\N+1}  \\
& \qquad + \sum_{j \geq 0} \sum_{k = 0}^K x_\el^{-\alpha - j - 1}(\log x_\el)^k w_0 \otimes \dots \otimes w_\N \otimes u_{\alpha + j,k} w_{\N+1} \\
& \quad \in F_s(J) + F_r(T)
\end{align*}
for some $ r < s $. Now let 
\begin{align*}
s = \Re( \wt w_0 \otimes \dots \otimes w_\N \otimes u_{\alpha - 1} w_{\N+1}) = \wt u - \Re(\alpha) + \sigma
\end{align*}
Then
\begin{align*}
\Re( \wt u_{\alpha - 1}^* w_0 \otimes w_1 \otimes \dots \otimes w_{\N+1}) &= \Re(\alpha) - \wt u + \sigma < \sigma < s, \\
\Re ( \wt w_0 \otimes \dots \otimes u_k w_p \otimes \dots \otimes w_{\N+1} ) &= \wt u - k - 1 + \sigma < \wt u - \Re (\alpha) + \sigma = s,
\end{align*}
for all $ k \geq 0 $ and $ p = 1, \dots, \N $. Similarly to before, since $ \wt \mathcal{L} = 0 $, we have
\begin{align*}
w_0 \otimes \dots \otimes w_\N \otimes u_{\alpha-1} w_{\N+1} &= \mathcal{A}_{\N+1}(u,w_0,\dots, w_{\N+1}) + u_{\alpha - 1}^* w_0 \otimes w_1 \otimes  \dots \otimes w_{\N+1} \\
& \qquad - \sum_{p =1}^\N \sum_{k \geq 0} x_p^{-1 - k}  w_0 \otimes \dots \otimes  \left( x_p^\mathcal{L} { \mathcal{L} - 1 \choose k} u \right)_{k} w_p \otimes \dots \otimes w_{\N+1}  \\
& \in F_s(J) + F_r(T)
\end{align*}
for some $ r < s $.
\end{proof}

\begin{proposition}\label{prop:T = J + F_M(T)}
There exists $ M \in \mathbb{Z} $ such that for any $ r \in \mathbb{R} $, $ F_r(T) \subseteq F_r(J) + F_M(T) $. Furthermore, $ T = J + F_M(T) $.
\end{proposition}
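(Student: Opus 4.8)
The plan is to combine Lemma~\ref{lem:grade shift} with the $ C_1 $-cofiniteness hypothesis and run a descending induction on weight. First I would fix, for each $ i = 0, \dots, \N+1 $, a finite-dimensional $ \mathbb{C} $-graded subspace $ P_i \subseteq W_i $ with $ W_i = C_1(W_i) + P_i $; this is possible because every $ W_i $ is $ C_1 $-cofinite, using for $ W_{\N+1} = \widetilde{W}_\N $ the standing hypothesis and for $ W_0 = \widetilde{W}_0' $ that $ C_1 $-cofiniteness is inherited by the contragredient module. Let $ N_i \in \mathbb{R} \cup \{-\infty\} $ be the largest real part of a weight occurring in $ P_i $. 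Since $ C_1(W_i) $ is a graded subspace and $ P_i $ lies in weights of real part at most $ N_i $, it follows that $ (W_i)_{[n]} \subseteq C_1(W_i) $ whenever $ \Re(n) > N_i $. I would then set $ M_0 = N_0 + \dots + N_{\N+1} $ and fix an integer $ M \geq M_0 $.

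Next I would match the $ C_1 $-spaces with the generators of $ J $. For $ 1 \leq p \leq \N $ the module $ W_p $ is untwisted, so $ C_1(W_p) $ is spanned by homogeneous vectors $ u_{-1}w $ with $ u \in V_+ $; for the $ g $-twisted module $ W_{\N+1} = \widetilde{W}_\N $, $ C_1(W_{\N+1}) $ is spanned by homogeneous $ u_{\alpha-1}w $ with $ u \in V_+^{[\alpha]} $, $ \Re(\alpha)\in[0,1) $; and for the $ g^{-1} $-twisted module $ W_0 = \widetilde{W}_0' $, unwinding the definition shows $ C_1(W_0) $ is spanned by homogeneous $ u_{\alpha'-1}w $ with $ u \in V_+^{[\alpha]} $, $ \Re(\alpha)\in[0,1) $, since $ \alpha' $ is exactly the $ \mathcal{S}_{g^{-1}} $-eigenvalue attached to such a $ u $. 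Thus the three types of vectors reduced by the three clauses of Lemma~\ref{lem:grade shift} span, over homogeneous elements, precisely $ C_1(W_0) $, $ C_1(W_p) $, and $ C_1(W_{\N+1}) $.

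The main step would be to show: if $ r > M_0 $, then every homogeneous pure tensor $ w_0 \otimes \dots \otimes w_{\N+1} $ with $ \Re(\wt w_0 + \dots + \wt w_{\N+1}) = r $ lies in $ F_r(J) + F_{r'}(T) $, where $ r' $ is the largest real part of a weight of $ W_0 \otimes \dots \otimes W_{\N+1} $ strictly below $ r $ (this exists and is $ < r $ because that space is discretely graded, so its set of weight-real-parts is bounded below and locally finite). The point is that $ \sum_i \Re(\wt w_i) = r > M_0 = \sum_i N_i $ forces $ \Re(\wt w_i) > N_i $ for some $ i $, hence $ w_i \in C_1(W_i) $; expanding $ w_i $ as a finite sum of homogeneous vectors of the $ C_1 $-form of weight $ \wt w_i $ identified above and applying the relevant clause of Lemma~\ref{lem:grade shift} to each resulting summand — which still has total weight of real part $ r $ — places it in $ F_r(J) + F_s(T) $ for some $ s < r $, and thus in $ F_r(J) + F_{r'}(T) $. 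Since $ T_{(r)} = R \otimes (W_0 \otimes \dots \otimes W_{\N+1})_{(r)} $ is generated as an $ R $-module by such pure tensors and $ F_r(J) + F_{r'}(T) $ is an $ R $-submodule (the $ R $-action preserves weight), I would conclude $ T_{(r)} \subseteq F_r(J) + F_{r'}(T) $ for all $ r > M_0 $.

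Finally I would deduce $ F_r(T) \subseteq F_r(J) + F_M(T) $ for all $ r $ by induction on the discrete, locally finite set of $ r $ with $ T_{(r)} \neq 0 $: for $ r \leq M $ it is trivial since $ F_r(T) \subseteq F_M(T) $, while for a level $ r > M \geq M_0 $ one writes $ F_r(T) = T_{(r)} + F_{r'}(T) $, bounds $ T_{(r)} \subseteq F_r(J) + F_{r'}(T) $ by the main step, and bounds $ F_{r'}(T) \subseteq F_{r'}(J) + F_M(T) \subseteq F_r(J) + F_M(T) $ by the inductive hypothesis (using $ F_{r'}(J) = F_{r'}(T)\cap J \subseteq F_r(J) $); a general $ r $ reduces to the largest level below it. Taking the union over all $ r $ then gives $ T = \bigcup_r F_r(T) \subseteq J + F_M(T) \subseteq T $, i.e., $ T = J + F_M(T) $. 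I expect the only slightly delicate points to be (i) verifying that $ C_1 $-cofiniteness (and discrete gradedness) descend to the contragredient $ W_0 $ and correctly identifying $ C_1(W_0) $ with the $ \mathcal{A}_0 $-generators via the index $ \alpha' $, and (ii) making the descending induction well-founded, which is exactly the discrete gradedness of $ W_0 \otimes \dots \otimes W_{\N+1} $ recorded in the text; everything else is bookkeeping once Lemma~\ref{lem:grade shift} is in hand.
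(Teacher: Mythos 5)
Your argument is essentially the paper's own proof: use $C_1$-cofiniteness to produce an $M$ (the paper's $M=\sum_i M_i$, your $M\ge\sum_i N_i$) so that every homogeneous component of $T$ whose weight has real part above $M$ lies in $\sum_{i}R\otimes W_0\otimes\cdots\otimes C_1(W_i)\otimes\cdots\otimes W_{\N+1}$, then apply Lemma~\ref{lem:grade shift} and induct over the discrete set of weight levels, finishing with the union $T=\bigcup_r F_r(T)$. Your extra bookkeeping (graded choice of the complements $P_i$, the level $r'$ just below $r$, the observation that $F_r(J)+F_{r'}(T)$ is an $R$-submodule) only makes explicit what the paper's induction does implicitly, and it is correct.

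The one step to repair is your justification that $W_0=\widetilde{W}_0'$ is $C_1$-cofinite. You assert that $C_1$-cofiniteness ``is inherited by the contragredient module,'' but this is not proved in the paper and is not a known general fact: the contragredient action involves $e^{xL(1)}$ and $x\mapsto x^{-1}$, so there is no direct relation between $C_1(\widetilde{W}_0)$ and $C_1(\widetilde{W}_0')$ (unlike discrete gradedness, which does pass to the contragredient trivially). The paper sidesteps this by simply treating $W_0$ as one of the modules assumed $C_1$-cofinite (its proof starts from $\dim W_i/C_1(W_i)<\infty$ for $i=0,\dots,\N+1$), so you should either state this as part of the standing hypotheses or supply a proof; as written, that auxiliary claim is the only genuine gap, and the rest of your argument, including the identification of $C_1(W_0)$ with the span of the vectors $u_{\alpha'-1}w_0$ via the $\mathcal{S}_{g^{-1}}$-eigenvalue $\alpha'$, matches the paper.
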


\begin{proof}
Since $ \dim W_i / C_1(W_i) < \infty $, for $ i = 0,\dots , \N+1 $, there is a finite-dimensional subspace $ U_i \subseteq W_i $ such that $ W_i = U_i + C_1(W_i) $. Hence we can find find $ M_i \in \mathbb{Z} $ sufficiently large such that any element in $ (W_i)_{[m]} $ is necessarily in $  C_1(W_i) $ whenever $ \Re(m) > M_i $. Taking $ M = \sum_{i = 0}^{\N+1} M_i $, we see that any homogeneous element in $ T $ that has real component of its weight larger than $ M $ must be in
\begin{align*}
\sum_{i = 0}^{\N+1} R \otimes W_0 \otimes \dots \otimes C_1(W_i) \otimes \dots \otimes W_{\N+1}
\end{align*}
Hence, we have $ M \in \mathbb{Z} $ such that
\begin{equation}\label{eq:inclusion}
\begin{aligned}
\coprod_{m > M} T_{(m)} &\subseteq \sum_{i = 0}^{\N+1} R \otimes W_0 \otimes \dots \otimes C_1(W_i) \otimes \dots \otimes W_{\N+1}
\end{aligned}
\end{equation}

We immediately have $ F_r(T) \subseteq F_M(T) \subseteq F_r(J) + F_M(T) $ for all $ r \leq M $. Since $ T $ is discretely graded, we can use induction on $ r \geq M $. Assume that we have some $ s > M $ such that $ F_r(T) \subseteq F_r(J) + F_M(T) $ for all $ r < s $. 

We want to show that $ T_{(s)} \subseteq F_s(J) + F_M(T) $. Since $ T_{(s)} \subseteq \coprod_{m > M} T_{(m)} $, we can consider elements in the summands of the \eqref{eq:inclusion}, without loss of generality. By Lemma \ref{lem:grade shift}, these elements are in $  F_s(J) + F_r(T) $ for some $ r < s $. By the induction hypothesis, these elements are then in $ F_s(J) + F_r(T) \subseteq F_s(J) + F_M(T) $, and we have proved the first statement.

Furthermore, we have
\begin{align*}
T = \bigcup_{r \in \mathbb{R}} F_r(T) \subseteq \bigcup_{r \in \mathbb{R}} \left( F_r(J) + F_M(T) \right) \subseteq \bigcup_{r \in \mathbb{R}} F_r(J) + F_M(T) = J + F_M(T).
\end{align*}
And $ J + F_M(T) \subseteq T $, so we have $ T = J + F_M(T) $.
\end{proof}

\begin{corollary}
The $ R $-module $ T/J $ is finitely generated.
\end{corollary}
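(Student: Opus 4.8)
The plan is to read this off immediately from Proposition~\ref{prop:T = J + F_M(T)}, using only that each $F_r(T)$ is a finitely-generated $R$-module, which was already established during the construction of the filtration (it rests on $W_0 \otimes \cdots \otimes W_{\N+1}$ being discretely graded and on $R$ carrying the trivial weight grading).

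First I would consider the $R$-module homomorphism obtained by composing the inclusion $F_M(T) \hookrightarrow T$ with the canonical projection $\pi \colon T \to T/J$. By Proposition~\ref{prop:T = J + F_M(T)} we have $T = J + F_M(T)$, so for any $t \in T$ we may write $t = a + b$ with $a \in J$ and $b \in F_M(T)$, giving $\pi(t) = \pi(b)$. Hence the composite $F_M(T) \to T/J$ is surjective. Since $F_M(T)$ is a finitely-generated $R$-module and a homomorphic image of a finitely-generated module is finitely generated, it follows that $T/J$ is a finitely-generated $R$-module.

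There is no genuine obstacle in this corollary: all of the work has already been done in establishing $T = J + F_M(T)$ and in the earlier observation that the $F_r(T)$ are finitely generated over $R$. If an explicit generating set is desired, one may pick weight-homogeneous $R$-module generators $g_1, \dots, g_m$ of $F_M(T)$ (finitely many, by discrete gradedness) and then $g_1 + J, \dots, g_m + J$ generate $T/J$ over $R$.
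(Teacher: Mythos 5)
Your argument is correct and is essentially the paper's own proof: both use $T = J + F_M(T)$ from Proposition~\ref{prop:T = J + F_M(T)} to see that the image of $F_M(T)$ under the projection $T \to T/J$ is all of $T/J$, and then conclude from the finite generation of $F_M(T)$. Nothing further is needed.
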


\begin{proof}
Since $ T = J + F_M(T) $, the image of $ F_M(T) $ in $ T/J $ is all of $ T/J $. Since $ F_M(T) $ is finitely generated, so is $ T/J $.
\end{proof}

For $ w \in T $, we use $ [w] $ to denote the equivalence class $ w + J $ in $ T/J $. Consider a change of variables from $ (z_1, \dots, z_\N) $ to $ (\zeta_1, \dots, \zeta_\N) $ given by
\begin{equation}
z_j = \sum_{i=1}^\N b_{ij} \zeta_i + \gamma_j,
\end{equation}
for $ B = (b_{ij})_{i,j = 1}^\N \in \text{GL}(\N, \mathbb{C}) $ and $ (\gamma_1, \dots, \gamma_\N) \in \mathbb{C}^\N $.
Then 
\begin{equation}
\frac{\partial}{\partial \zeta_i} = \sum_{j = 1}^\N \frac{\partial z_j}{\partial \zeta_i} \frac{\partial}{\partial z_j} = \sum_{j = 1}^\N b_{ij} \frac{\partial}{\partial z_j} .
\end{equation}
Define the operators $ \widehat{L}_{\zeta_\el} $ on $ W_0 \otimes \cdots \otimes W_{\N+1} $ by 
\begin{equation}
\widehat{L}_{\zeta_\el} (w_0 \otimes \cdots \otimes w_{\N+1}) = \sum_{j = 1}^\N b_{\el j} w_0 \otimes \cdots \otimes L(-1) w_j \otimes \cdots \otimes W_{\N+1} ,
\end{equation}
for each $ \el = 1, \dots, \N $.

\begin{corollary}
Assume that $ R $ is Noetherian. For any $ w_i \in W_i $, $ i = 0,\dots, \N+1 $, and for any $ \el = 1, \dots, \N $, there exist $ m_\el \in \mathbb{Z}_{\geq 0} $ and $ a_{k,\el} (x_1, \dots, x_\N) \in R $ for $ k = 1, \dots, m_\el $ such that
\begin{equation}\label{eq:L(-1)-eq}
\begin{aligned}
&[\widehat{L}_{\zeta_\el}^{m_\el} (w_0 \otimes \dots \otimes w_{\N+1})] + a_{1,\el} (x_1, \dots, x_\N) [\widehat{L}_{\zeta_\el}^{m_\el-1} (w_0 \otimes \dots \otimes w_{\N+1})] + \\
&\cdots + a_{m_\el-1,\el} (x_1, \dots, x_\N)[\widehat{L}_{\zeta_\el} (w_0 \otimes \dots \otimes w_{\N+1})] + a_{m_\el,\el} (x_1, \dots, x_\N)[w_0 \otimes \dots \otimes w_{\N+1}] = 0.
\end{aligned}
\end{equation}
\end{corollary}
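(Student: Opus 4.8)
The plan is to run the standard Noetherian-module argument underlying Huang's method. By the preceding corollary, $T/J$ is a finitely generated $R$-module, and $R$ is Noetherian by hypothesis; hence $T/J$ is a Noetherian $R$-module, so every ascending chain of $R$-submodules of $T/J$ stabilizes.

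Fix $w_i \in W_i$ for $i = 0, \dots, \N+1$ and an index $\el \in \{1, \dots, \N\}$. First I would observe that $\widehat{L}_{\zeta_\el}$ maps $W_0 \otimes \dots \otimes W_{\N+1}$ into itself, since $L(-1)$ preserves each $W_j$ and the $b_{\el j}$ are scalars; hence each iterate $\widehat{L}_{\zeta_\el}^{\,k}(w_0 \otimes \dots \otimes w_{\N+1})$ is a well-defined element of $W_0 \otimes \dots \otimes W_{\N+1} \subseteq T$, and has a class $[\widehat{L}_{\zeta_\el}^{\,k}(w_0 \otimes \dots \otimes w_{\N+1})] \in T/J$. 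I then set
\begin{equation*}
M_j = \sum_{k=0}^{j} R\,[\widehat{L}_{\zeta_\el}^{\,k}(w_0 \otimes \dots \otimes w_{\N+1})] \subseteq T/J ,
\end{equation*}
so that $M_0 \subseteq M_1 \subseteq M_2 \subseteq \dots$ is an ascending chain of $R$-submodules of the Noetherian module $T/J$.

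By the ascending chain condition, there is some $m_\el \in \mathbb{Z}_{\geq 0}$ with $M_{m_\el} = M_{m_\el-1}$ (taking $M_{-1} = 0$). In particular $[\widehat{L}_{\zeta_\el}^{\,m_\el}(w_0 \otimes \dots \otimes w_{\N+1})] \in M_{m_\el} = M_{m_\el-1}$, so there exist $a_{k,\el}(x_1, \dots, x_\N) \in R$ for $k = 1, \dots, m_\el$ with
\begin{equation*}
[\widehat{L}_{\zeta_\el}^{\,m_\el}(w_0 \otimes \dots \otimes w_{\N+1})] = -\sum_{k=1}^{m_\el} a_{k,\el}(x_1, \dots, x_\N)\,[\widehat{L}_{\zeta_\el}^{\,m_\el-k}(w_0 \otimes \dots \otimes w_{\N+1})] ,
\end{equation*}
which is precisely \eqref{eq:L(-1)-eq} after collecting all terms on one side.

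I do not expect a genuine obstacle here: the statement is a formal consequence of the finite generation of $T/J$ (the previous corollary, which rests on Proposition \ref{prop:T = J + F_M(T)} and hence on the $C_1$-cofiniteness hypotheses) together with the Noetherian hypothesis on $R$ (which holds, for instance, whenever $A$ is finitely generated, in particular when $g$ has finite order, as noted above). The only points worth flagging are that $\widehat{L}_{\zeta_\el}$ need neither descend to an operator on $T/J$ nor be $R$-linear: the argument uses only that its iterates applied to $w_0 \otimes \dots \otimes w_{\N+1}$ yield well-defined classes in $T/J$ and that the $R$-submodules these successively generate form an ascending chain. The genuinely substantive work — turning \eqref{eq:L(-1)-eq} into an actual homogeneous linear ODE in $\partial/\partial \zeta_\el$ for the correlation function $\langle w_0, \mathcal{Y}_1(w_1,z_1)\cdots\mathcal{Y}_\N(w_\N,z_\N)w_{\N+1}\rangle$ by applying $\phi$ and the $L(-1)$-derivative property, and controlling the resulting singular points — is carried out in the sequel rather than in this corollary.
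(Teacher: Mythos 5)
Your proof is correct and is essentially the paper's own argument: the paper likewise forms the ascending chain of $R$-submodules of $T/J$ generated by the classes $[\widehat{L}_{\zeta_\el}^k(w_0 \otimes \dots \otimes w_{\N+1})]$, invokes the Noetherianity of $R$ together with the finite generation of $T/J$ to conclude the chain stabilizes at some $m_\el$, and reads off the relation. Your added remarks (that $\widehat{L}_{\zeta_\el}$ need not descend to $T/J$ or be $R$-linear, and that the analytic content lies in the subsequent theorem) are accurate but not needed for the corollary itself.
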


\begin{proof}
Consider an ascending chain $ (M_j)_{j = 0}^\infty $ of submodules $ M_j $ of $ T/J $ generated by 
\begin{equation}
\{ [\widehat{L}_{\zeta_\el}^k (w_0 \otimes \dots \otimes w_{\N+1})] : k \in \mathbb{Z}, \ 0 \leq k \leq j \} .
\end{equation} 
Since $ R $ is a Noetherian ring and $ T/J $ is a finitely generated $ R $-module, this chain stabilizes. So there is some $ m_\el \in \mathbb{Z}_{\geq 0} $ such that $ M_{m_\el} = M_{m_\el-1} $. Hence $ [\widehat{L}_{\zeta_\el}^{m_\el} (w_0 \otimes \dots \otimes w_{\N+1})] \in M_{m_\el-1} $, and we obtain our desired equation.
\end{proof}

\begin{theorem}\label{thm:formal DEs general}
Assume that $ R $ is Noetherian. Then there exists a system of differential equations
\begin{equation}\label{eq:DE}
\begin{aligned}
& \left(\frac{\partial}{\partial \zeta_\el} \right)^{m_\el} \psi + \sum_{k = 1}^{m_\el} a_{k,\el}(z_1, \dots, z_\N)  \left(\frac{\partial}{\partial \zeta_\el} \right)^{m_\el-k} \psi  = 0, \qquad \el = 1, \dots, \N , 
\end{aligned} 
\end{equation}
satisfied by the series
\begin{align*}
\langle w_0, \mathcal{Y}_1(w_1,z_1) \cdots \mathcal{Y}_\N(w_\N,z_\N) w_{\N+1} \rangle
\end{align*}
in the region $ |z_1| > \dots > | z_\N | $.
\end{theorem}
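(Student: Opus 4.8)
The plan is to transport the algebraic recursion of the preceding corollary, which lives in the finitely generated $ R $-module $ T/J $, down to an analytic relation among matrix coefficients via the evaluation map $ \phi $. Two ingredients are needed: that $ J $ is contained in $ \ker \phi $, so that the recursion survives passage through $ \phi $; and that $ \phi $ intertwines the algebraic operator $ \widehat{L}_{\zeta_\el} $ with the differential operator $ \frac{\partial}{\partial \zeta_\el} $.

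For the first ingredient, note that $ \phi(f \cdot t) = \left( \iota_{|x_1| > \dots > |x_\N|} f \right) \phi(t) $ for all $ f \in R $ and $ t \in T $, since $ \iota_{|x_1| > \dots > |x_\N|} $ is a ring homomorphism; hence $ \ker \phi $ is an $ R $-submodule of $ T $. As each generator $ \mathcal{A}_j(u, w_0, \dots, w_{\N+1}) $ of $ J $ was constructed so as to lie in $ \ker \phi $, we conclude $ J \subseteq \ker \phi $. For the second ingredient, the $ L(-1) $-derivative property (condition (ii) in the definition of intertwining operator) shows that $ \frac{\partial}{\partial x_j} $ acts on $ \langle w_0, \mathcal{Y}_1(w_1,x_1) \cdots \mathcal{Y}_\N(w_\N,x_\N) w_{\N+1} \rangle $ by replacing $ w_j $ with $ L(-1) w_j $ inside $ \mathcal{Y}_j $, the derivation acting termwise on the formal series, with $ \frac{\partial}{\partial x_j} \log x_j = x_j^{-1} $ taking care of the logarithmic terms. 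Since $ \iota_{|x_1| > \dots > |x_\N|} $ commutes with every $ \frac{\partial}{\partial x_j} $ (differentiation commutes with the expansion of $ (x_i - x_j)^{-1} $ in nonnegative powers of $ x_j $), and $ \frac{\partial}{\partial \zeta_\el} = \sum_{j=1}^\N b_{\el j} \frac{\partial}{\partial x_j} $ matches the definition of $ \widehat{L}_{\zeta_\el} $ slot by slot, I obtain $ \phi \circ \widehat{L}_{\zeta_\el} = \frac{\partial}{\partial \zeta_\el} \circ \phi $ on $ 1 \otimes W_0 \otimes \dots \otimes W_{\N+1} $; iterating, as $ \widehat{L}_{\zeta_\el} $ preserves this subspace, gives $ \phi \circ \widehat{L}_{\zeta_\el}^{k} = \left( \frac{\partial}{\partial \zeta_\el} \right)^{k} \circ \phi $ for all $ k \geq 0 $.

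Now \eqref{eq:L(-1)-eq} states that $ \widehat{L}_{\zeta_\el}^{m_\el}(w_0 \otimes \dots \otimes w_{\N+1}) + \sum_{k=1}^{m_\el} a_{k,\el}(x_1,\dots,x_\N) \cdot \widehat{L}_{\zeta_\el}^{m_\el - k}(w_0 \otimes \dots \otimes w_{\N+1}) \in J $. Applying $ \phi $, using $ J \subseteq \ker \phi $ together with the two ingredients above, yields the identity of formal series
\begin{equation*}
\left( \frac{\partial}{\partial \zeta_\el} \right)^{m_\el} \Psi + \sum_{k=1}^{m_\el} \left( \iota_{|x_1| > \dots > |x_\N|} a_{k,\el} \right) \left( \frac{\partial}{\partial \zeta_\el} \right)^{m_\el - k} \Psi = 0, \qquad \el = 1, \dots, \N,
\end{equation*}
where $ \Psi = \langle w_0, \mathcal{Y}_1(w_1,x_1) \cdots \mathcal{Y}_\N(w_\N,x_\N) w_{\N+1} \rangle $. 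Specializing $ x_i = z_i $ along fixed branches of each $ \log z_i $: the coefficient $ \iota_{|x_1| > \dots > |x_\N|} a_{k,\el} $ is by construction the series expansion, convergent in $ |z_1| > \dots > |z_\N| $, of the function $ a_{k,\el}(z_1, \dots, z_\N) $, and the specialization commutes with $ \frac{\partial}{\partial \zeta_\el} $ because applying $ \frac{\partial}{\partial x_j} $ to a monomial $ x_j^r (\log x_j)^k $ and then setting $ x_j = z_j $ gives the same result as differentiating $ z_j^r (\log z_j)^k $. This produces the asserted system \eqref{eq:DE} for $ \langle w_0, \mathcal{Y}_1(w_1,z_1) \cdots \mathcal{Y}_\N(w_\N,z_\N) w_{\N+1} \rangle $ in the region $ |z_1| > \dots > |z_\N| $.

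The substantive part of the argument has already been carried out in Lemma~\ref{lem:grade shift}, Proposition~\ref{prop:T = J + F_M(T)}, and the two corollaries immediately preceding the theorem (finite generation of $ T/J $, and the Noetherian ascending-chain argument producing the integers $ m_\el $ and the coefficients $ a_{k,\el} \in R $); what remains here is essentially bookkeeping. The one point that requires care is the mutual compatibility of the three operations involved — the expansion $ \iota_{|x_1| > \dots > |x_\N|} $, the $ L(-1) $-derivative property (which must be verified to intertwine $ \widehat{L}_{\zeta_\el} $ with $ \frac{\partial}{\partial \zeta_\el} $ even in the presence of the logarithmic variables), and the substitution of complex numbers for the formal variables along a fixed branch — together with the observation, guaranteed by $ J $ being an $ R $-submodule, that the coefficients $ a_{k,\el} $ genuinely lie in $ R $ and therefore converge in the stated region.
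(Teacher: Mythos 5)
Your proposal is correct and follows essentially the same route as the paper: apply the evaluation map $\phi$ (well defined on $T/J$ since the generators of $J$ were built inside $\ker\phi$ and $\ker\phi$ is an $R$-submodule) to the relation \eqref{eq:L(-1)-eq}, use the $L(-1)$-derivative property to convert $\widehat{L}_{\zeta_\el}$ into $\sum_j b_{\el j}\frac{\partial}{\partial x_j}=\frac{\partial}{\partial\zeta_\el}$, and then specialize $x_i=z_i$. The extra care you take with the compatibility of $\iota_{|x_1|>\dots>|x_\N|}$, differentiation, and branch-wise substitution is just a more explicit account of the bookkeeping the paper leaves implicit.
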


\begin{proof}
Since $ J $ is contained in the kernel of $ \phi $, we have  
\begin{align*}
\overline{\phi} : T/J \to \mathbb{C}\{x_1, \dots, x_\N \}[\log x_1, \dots, \log x_\N].
\end{align*}
Using this map on \eqref{eq:L(-1)-eq} followed by the $ L(-1) $-derivative property, we obtain
\begin{equation} 
\begin{aligned}
& \left(\sum_{j=1}^\N b_{\el j} \frac{\partial}{\partial x_j} \right)^{m_\el} \langle w_0, \mathcal{Y}_1(w_1,x_1) \cdots \mathcal{Y}_\N(w_\N,x_\N) w_{\N+1} \rangle \\
&\quad + \sum_{k = 1}^{m_\el} \iota_{|x_1| > \dots > |x_\N|} a_{k,\el}(x_1, \dots, x_\N)  \left(\sum_{j=1}^\N b_{\el j} \frac{\partial}{\partial x_j} \right)^{m_\el-k} \langle w_0, \mathcal{Y}_1(w_1,x_1) \cdots \mathcal{Y}_\N(w_\N,x_\N) w_{\N+1} \rangle  = 0.
\end{aligned} 
\end{equation}
Evaluating $ x_1 = z_1 , \dots, x_\N = z_\N $ gives the desired differential equations.
\end{proof}

\begin{remark}
Recall that $ R $ is Noetherian when $ V $ is finitely generated, for which there are many natural examples. Furthermore, $ R $ is Noetherian when the order of $ g $ is finite, which is an assumption in Section \ref{sec:regular singularities}.
\end{remark}

\begin{remark}
We have shown that the series $ \langle w_0, \mathcal{Y}_1(w_1,z_1) \cdots \mathcal{Y}_\N(w_\N,z_\N) w_{\N+1} \rangle $ formally satisfies \eqref{eq:DE}, but this does not show that the series converges. Since there are non-integral powers and logarithms of $ z_i $, we cannot use the usual power series convergence for analytic systems (even if we reduce to ordinary differential equations in $ z_i $). To show convergence, we instead use the theory of differential equations with regular singular points in Section \ref{sec:regular singularities} while assuming that $ g $ has finite order. We do not have a method for showing the convergence when $ g $ is a general automorphism.
\end{remark}

\section{The twisted KZ equations}
\label{sec:twisted KZ}
In this section, we work with the affine vertex operator algebras $ V_{\hat{\mathfrak{g}}}(\lvl,0) $ as an explicit family of vertex operator algebras. A slightly different method is used than in the previous section to derive a system of differential equations when $ w_0, \dots , w_{\N+1} $ are lowest weight vectors. When the modules are untwisted, this system is well known in the physics literature as ``the KZ equations" \cite{KZ}. When $ W_0' $ and $ W_{\N+1} $ are $ g $-twisted (by a semisimple inner-automorphism of $ \mathfrak{g} $), this system is known in the physics literature as the ``\textit{twisted} KZ equations" (for ``inner-automorphic WZW orbifolds") \cite{deBoer:2001nw}. Observe that under the condition that $ W_1, \dots, W_\N $ are untwisted and $ W_0' $ and $ W_{\N+1} $ are $ g $-twisted, the intertwining operators are necessarily of \gtype. This is an important assumption used by de Boer, Halpern and Obers to derive the twisted KZ equations. We keep the assumption of \gtype\ intertwining operators, but we will generalize the twisted KZ equations so that $ g $ can be an arbitrary automorphism of the vertex operator algebra $ V_{\hat{\mathfrak{g}}}(\lvl,0) $. In the following section, we will show that the solutions to the twisted KZ equations have ``regular singularities" when $ g $ has finite order. \\

Let $ \mathfrak{g} $ be a finite-dimensional simple Lie algebra with an invariant symmetric bilinear form $ ( \cdot, \cdot) $. Let $ g $ be an automorphism of $ \mathfrak{g} $. Let $ \mathcal{L} $, $ \mathcal{S} $ and $ \mathcal{N} $ be operators of $ \mathfrak{g} $ such that $ g = e^{2 \pi \iu \mathcal{L} }$, $ \mathcal{S} $ and $ \mathcal{N} $ are the semisimple and nilpotent parts of $ \mathcal{L} $, and the eigenvalues of $ \mathcal{S} $ have real part in $ [0,1) $. \\

As defined in \cite{Huang_twisted_af_modules}, the \emph{twisted affine Lie algebra} $ \hat{\mathfrak{g}}^{[g]} $ is spanned by the elements 
\begin{equation}
\mathbf{k}, \ a \otimes t^{n} = a_n, \quad \text{for } a \in \mathfrak{g} \text{ and } n \in \mathbb{C} \text{ such that } g a = e^{2 \pi \iu n} a,
\end{equation} 
with Lie bracket given by 
\begin{align}
[a_m, b_n] &= [a,b]_{m + n} + ((m+\mathcal{N})a,b)\delta_{m+n,0} \mathbf{k} , \\
[\mathbf{k},a_m] &= 0.
\end{align}
We have the subspaces
\begin{gather*}
\hat{\mathfrak{g}}^{[g]}_+ := \Span\{a_n \in \hat{\mathfrak{g}}^{[g]} : \Re(n) > 0 \}, \quad \hat{\mathfrak{g}}^{[g]}_- := \Span\{a_n \in \hat{\mathfrak{g}}^{[g]} : \Re(n) < 0 \}, \\
\hat{\mathfrak{g}}^{[g]}_\mathbb{I} := \Span\{a_n \in \hat{\mathfrak{g}}^{[g]} : \Re(n) = 0 \}, \quad \hat{\mathfrak{g}}^{[g]}_0 := \hat{\mathfrak{g}}^{[g]}_\mathbb{I} \oplus \mathbb{C} \mathbf{k}.
\end{gather*}
Furthermore, $ \hat{\mathfrak{g}}^{[g]}_+ $, $ \hat{\mathfrak{g}}^{[g]}_- $ and $ \hat{\mathfrak{g}}^{[g]}_0 $ are subalgebras of $ \hat{\mathfrak{g}}^{[g]} $, giving a triangular decomposition
\begin{equation}
\hat{\mathfrak{g}}^{[g]} = \hat{\mathfrak{g}}^{[g]}_+ \oplus \hat{\mathfrak{g}}^{[g]}_0 \oplus \hat{\mathfrak{g}}^{[g]}_- .
\end{equation}
Note that when $ g = 1 $, we obtain the affine Lie algebra $ \hat{\mathfrak{g}}^{[1]} = \hat{\mathfrak{g}} $.

The vertex operator algebra that we consider here is the universal affine vertex operator algebra $ V_{\hat{\mathfrak{g}}}(\lvl,0) = U(\hat{\mathfrak{g}}) \otimes_{U(\hat{\mathfrak{g}}_{(\leq 0)})} \mathbb{C}_\lvl $ at a fixed level $ \lvl \in \mathbb{C} \backslash \{-h^\vee\} $ (see for example \cite{LepowskyLi}). More generally, we could take $ V $ to be any quotient of $  V_{\hat{\mathfrak{g}}}(\lvl,0) $. This vertex operator algebra is a representation of $ \hat{\mathfrak{g}} $, and we denote the action of $ a_n $ on $ V $ by $ a(n) $. The automorphism $ g $ of $ \mathfrak{g} $ induces automorphisms (which we still denote by $ g $)
\begin{gather}
g \mathbf{k} := \mathbf{k}, \quad g (a_m) := (g a)_m, \quad \text{for all } a\in \mathfrak{g}, \ m \in \mathbb{Z}; \text{ and}
\\
g (a^{(1)}(n_1) \cdots a^{(\el)}(n_\el) \one ):= (ga^{(1)})(n_1) \cdots (ga^{(\el)})(n_\el) \one, \quad \text{for all } a^{(1)}, \dots,  a^{(\el)}\in \mathfrak{g}, \ \el \in \mathbb{Z}_{\geq 0} ,
\end{gather}
of $ \hat{\mathfrak{g}} $ and $ V_{\hat{\mathfrak{g}}}(\lvl,0) $, respectively. Since $ g $ preserves the conformal vector $ \omega $ of $ V_{\hat{\mathfrak{g}}}(\lvl,0) $, $ g $ is furthermore an automorphism of $ V_{\hat{\mathfrak{g}}}(\lvl,0) $ viewed as a vertex operator algebra. The $ g $-twisted $  V_{\hat{\mathfrak{g}}}(\lvl,0) $-modules are naturally representations of $ \hat{\mathfrak{g}}^{[g]} $ \cite{Huang_twisted_af_modules}, and we denote the action of $ a_n $ on the twisted modules by $ a(n) $. \\

Let $ \{ a^i \}_{i \in I} $ be a basis for $ \mathfrak{g} $ consisting of generalized eigenvectors of $ g $, i.e., eigenvectors of $ \mathcal{S} $. Since $ (\cdot, \cdot) $ is non-degenerate, there is a basis $ \{{a^i}'\}_{i\in I} $ dual to $ \{ a^i \}_{i \in I} $ with respect to $ (\cdot, \cdot) $. The operator $ e^{2\pi \iu \mathcal{S}} $ is also an automorphism of $ \mathfrak{g} $. Since invariant symmetric bilinear forms of $ \mathfrak{g} $ are invariant under automorphisms, we have
\begin{equation}
(e^{2 \pi \iu \mathcal{S}}{a^i}', a^j) = ( {a^i}', e^{-2 \pi \iu \mathcal{S}} a^j) = ({a^i}', e^{-2\pi \iu \alpha^{j}}{a^j}) =  e^{-2\pi \iu \alpha_j} \delta_{ij}.
\end{equation}
Hence, $ \{{a^i}'\}_{i\in I} $ are also generalized eigenvectors of $ g $.  \\

We will frequently work with expressions that are symmetric in $ a^i $ and $ {a^i}' $. To reduce the amount of explicit terms in these expressions, we make the following definitions. Choose a new index set $ I' $, disjoint from $ I $, for the dual basis $ \{a^i \}_{i\in I'} = \{{a^i}'\}_{i \in I} $. Define $ \mathcal{I} = I \sqcup I' $. Since $ \{a^i\}_{i \in I} $ is also dual to $ \{a^i \}_{i\in I'} $, we can make simplifications such as
\begin{align*}
\sum_{i \in I} \left( {a^{i}}'(-1) a^{i}(0) +  a^{i}(-1){a^{i}}'(0) \right)w = \sum_{i \in \mathcal{I}} a^{i}(-1){a^{i}}'(0)w.
\end{align*}
Note that we can further define $ \cdot ': \mathcal{I} \to \mathcal{I} $ by $ {a^{i'}} = (a^i)' $, so there is no ambiguity in the notation. For each $ i \in \mathcal I $, we denote the $ \mathcal{S} $-eigenvalues of $ a^i $ by $ \alpha^i $. \\

In this section, we will work with the same assumptions on the $ V $-modules as in the previous section (summarized in Figure \ref{fig:diagram}) with $ V = V_{\hat{\mathfrak{g}}}(\lvl,0) $. Let $ w_i \in W_i $, $ i = 1, \dots, \N $, $ w_0 \in {W}_0 $ and $ w_{\N+1} \in W_{\N+1} $ be lowest weight vectors, i.e., annihilated by $ \hat{\mathfrak{g}}_+ $, $ \hat{\mathfrak{g}}^{[g^{-1}]}_+ $ and $ \hat{\mathfrak{g}}^{[g]}_+ $, respectively. Using these assumptions, we will construct a first-order homogeneous linear system of partial differential equations satisfied by the formal series
\begin{equation}
\langle w_0, \mathcal{Y}_1(w_1,x_1) \cdots \mathcal{Y}_\N(w_\N, x_\N) w_{\N+1} \rangle
\end{equation}
by computing the right-hand side of
\begin{equation}
\frac{\partial}{\partial x_\el} \langle w_0, \mathcal{Y}_1(w_1,x_1) \cdots \mathcal{Y}_\N(w_\N, x_\N) w_{\N+1} \rangle = \langle w_0, \mathcal{Y}_1(w_1,x_1) \cdots \mathcal{Y}_\el(L(-1)w_\el, x_\el) \cdots \mathcal{Y}_\N(w_\N, x_\N) w_{\N+1} \rangle,
\end{equation}
for each fixed $ \el = 1, \dots, \N $. Since $ W_\el $ is an untwisted module,   the $ L(-1) $-action on $ W_\el $ is given by
\begin{align*}
 L(-1) = \sum_{i \in I} \sum_{j \in \mathbb{Z}_{ > 0}} \frac{1}{2(\lvl + h^\vee)} {a^i}'(-j) a^i(j-1) + \sum_{i \in I} \sum_{j \in \mathbb{Z}_{\leq 0}} \frac{1}{2(\lvl + h^\vee)} a^i(j-1){a^i}'(-j) .
\end{align*}
(Note that here we use a basis $ \{ a^i\}_{i\in I} $ of generalized eigenvectors for $ g $. Although this is not needed for the untwisted module $ W_\el $, it will later be used for the $ g $-twisted modules.) Utilizing that $ w_\el $ is a lowest weight vector, we obtain the relation
\begin{align*}
 \mathcal{Y}_\el(L(-1)w_\el, x_\el) &= \mathcal{Y}_\el \left( \sum_{i \in I} \frac{1}{2(\lvl + h^\vee)} {a^i}'(-1) a^i(0) w_\el + \sum_{i \in I} \frac{1}{2(\lvl + h^\vee)} a^i(-1){a^i}'(0) w_\el, x_\el \right) ,
\end{align*} 
 or rearranged as
\begin{align*} 
2(\lvl + h^\vee) \mathcal{Y}_\el(L(-1)w_\el, x_\el) &= \sum_{i \in I} \mathcal{Y}_\el \left(  {a^i}'(-1) a^i(0) w_\el +  a^i(-1){a^i}'(0) w_\el, x_\el \right) \\
&= \sum_{i \in \mathcal{I}} \mathcal{Y}_\el (  a^i(-1){a^i}'(0) w_\el, x_\el ).
\end{align*}

The vectors $ {a^i} = {a^i}(-1) \one $ and $ {a^i}'(0) w_\el $ satisfy the assumptions on $ u $ and $ w_1 $, respectively, used to derive  \eqref{eq:take u(-1) out Y - KZ}. Hence, we can use  \eqref{eq:take u(-1) out Y - KZ} to obtain
\begin{equation}\label{eq:L(-1) to normal order}
\begin{aligned} 
&2(\lvl + h^\vee) \mathcal{Y}_\el(L(-1)w_\el, x_\el) = \sum_{i \in \mathcal{I}} \mathcal{Y}_\el ( a^i(-1){a^i}'(0) w_\el, x_\el ) \\
&= \sum_{i \in I} \left( Y^+ (a^i, x_\el)\mathcal{Y}_\el({a^i}'(0) w_\el, x_\el)  +  \mathcal{Y}_\el({a^i}'(0) w_\el, x_\el) Y^- (a^i, x_\el) - x_\el^{-1} \mathcal{Y}_\el( (\mathcal{L} a^i)(0) {a^i}'(0) w_\el, x_\el ) \right). 
\end{aligned}
\end{equation}

Assume $ u \in \left( V_{\hat{\mathfrak{g}}}(\lvl,0) \right)_{(1)} \cong \mathfrak{g} $ and $ \alpha \in \mathbb{C} $ with $ \mathcal{S} u = \alpha u $. Let $ \widetilde{w}_0 \in \widetilde{W}_0 $  and $ f \in \Hom (W_{\N+1}, \widetilde{W}_0 ) $. By use of $ Y^+(u,x) = Y_0^+(x^{-\mathcal{N}} u,x) $ and the definition of the contragredient module action 
\begin{equation}
\langle Y'(v,x) w', w \rangle := \langle w', Y(e^{x L(1)} (-x^{-2})^{L(0)} v, x^{-1} ) w \rangle,
\end{equation}
we have
\begin{align}
\langle w_0, Y^+ (u, x) \widetilde{w}_0 \rangle = \sum_{k=0}^K \sum_{m \in -\alpha + \mathbb{Z}_{> 0}} \frac{-1}{k!}\langle (\mathcal{N}^k u)(m)w_0,  \widetilde{w}_0 \rangle x^{-m-1} (\log x)^k = 0 .
\end{align}
By use of  $ Y^-(u,x) = Y_0^-(x^{-\mathcal{N}} u,x) $, we have
\begin{align}
\langle w_0, f Y^- (u, x) w_{\N+1} \rangle &= \sum_{k=0}^K  \sum_{m \in \alpha + \mathbb{Z}_{\geq 0}} \frac{(-1)^k}{k!} \langle w_0, f (\mathcal{N}^k u)(m)  w_{\N+1} \rangle x^{-m-1} (\log x)^k \nonumber \\
&= \sum_{k=0}^K \frac{(-1)^k}{k!} \langle w_0, f (\mathcal{N}^k u)(\alpha)  w_{\N+1} \rangle x^{-\alpha-1} (\log x)^k \nonumber \\
&= \langle w_0, f (x^{-\mathcal{N}} u)(\alpha) w_{\N+1} \rangle x^{-\alpha-1} \label{eq:in-state},
\end{align}
If $ \Re(\alpha) = 0 $, we still have $ ({\mathcal{N}}^k u)(\alpha) w_{\N+1} $ being of lowest weight. If $ \Re(\alpha) > 0 $, we further have $ (x^{-\mathcal{N}} u)(\alpha) w_{\N+1} = 0 $. \\

We wish to remove all instances of $ Y^+ (a^i, x_\el) $ and  $ Y^- (a^i, x_\el) $ from $ \frac{\partial}{\partial x_\el} \langle w_0, \mathcal{Y}_1(w_1,x_1) \cdots \mathcal{Y}_\N(w_\N, x_\N) w_{\N+1} \rangle $. To do so, we can use  \eqref{eq: Y+/- commutators - KZ} to move them as far left and right as possible, respectively. \\
For $ p < \el $, we have 
\begin{equation} \label{eq:p<l}
\mathcal{Y}_p(w_p, x_p) Y^+(a^i,x_\el) = Y^+(a^i,x_\el) \mathcal{Y}_p(w_p, x_p)  + (-x_p + x_\el)^{-1} \mathcal{Y}_p \left(( (x_p/x_\el )^{\mathcal{L}}a^i)(0) w_p, x_p \right). 
\end{equation}
For $ \el < p $, we have 
\begin{equation}\label{eq:l<p}
Y^-(a^i,x_\el) \mathcal{Y}_p(w_p, x_p)  =  \mathcal{Y}_p(w_p, x_p) Y^-(a^i,x_\el) + (x_\el - x_p)^{-1} \mathcal{Y}_p \left(( (x_p/x_\el )^{\mathcal{L}}a^i)(0) w_p, x_p \right).
\end{equation}
To work with these two relations on equal footing, we use the following notation
\begin{align*}
\colon (x_\el - x_p)^{-1} \colon  := \iota_{|x_1| > \dots > |x_\N|}(x_\el - x_p)^{-1} =  \begin{cases}
(x_\el - x_p)^{-1} & \text{when } \el < p, \\
(-x_p + x_\el)^{-1} & \text{when } p < \el.
\end{cases}
\end{align*}
We can now derive the \emph{twisted KZ equations} (first discovered in \cite{deBoer:2001nw}).
\begin{proposition}[The twisted KZ equations]
If the assumptions at the start of this section hold, then we have
\begin{equation}\label{eq:KZ equation}
\begin{aligned}
&2(\lvl + h^\vee) \frac{\partial}{\partial x_\el} \langle w_0, \mathcal{Y}_1(w_1,x_1) \cdots \mathcal{Y}_\N(w_\N, x_\N) w_{\N+1} \rangle \\
&= \sum_{i \in \mathcal{I}}  \Bigg( \sum_{ p \neq \el}  \sum_{k_1,k_2 = 0}^{K}  \frac{(-1)^{k_2}}{k_1!k_2!} (\log x_p)^{k_1}(\log x_\el)^{k_2} \left( \frac{x_p}{x_\el}
\right)^{\alpha^i} :(x_\el - x_p)^{-1} :  \langle w_0, \mathcal{Y}_1(w_1,x_1)  \\
& \quad \cdots \mathcal{Y}_\el({a^i}'(0)w_\el, x_\el) \cdots  \mathcal{Y}_p(( \mathcal{N}^{k_1 + k_2}a^i)(0)w_p, x_p) \cdots \mathcal{Y}_\N(w_\N, x_\N) w_{\N+1} \rangle \\
& \quad + \sum_{k = 0}^{K} \frac{(-1)^k}{k!} (\log x_\el)^k x_\el^{-\alpha^i -1} \langle w_0, \mathcal{Y}_1(w_1,x_1) \cdots \mathcal{Y}_\el({a^i}'(0)w_\el, x_\el) \cdots \mathcal{Y}_\N(w_\N, x_\N) (\mathcal{N}^k a^i)(\alpha^i) w_{\N+1} \rangle  \\
& \quad  - x_\el^{-1} \langle w_0, \mathcal{Y}_1(w_1,x_1) \cdots   \mathcal{Y}_\el( (\mathcal{L}a^i)(0) {a^i}'(0) w_\el, x_\el) \cdots \mathcal{Y}_\N(w_\N, x_\N) w_{\N+1} \rangle \Bigg),
\end{aligned}
\end{equation}
for each $ \el = 1, \dots, \N $, where $ K $ is chosen such that $ \mathcal{N}^{K+1} \mathfrak{g} = 0 $.
\end{proposition}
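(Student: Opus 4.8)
The plan is to evaluate the right-hand side of the $L(-1)$-derivative identity, multiplied through by $2(\lvl+h^\vee)$:
\[
2(\lvl+h^\vee)\frac{\partial}{\partial x_\el}\langle w_0, \mathcal{Y}_1(w_1,x_1)\cdots\mathcal{Y}_\N(w_\N,x_\N) w_{\N+1}\rangle = \langle w_0, \mathcal{Y}_1(w_1,x_1)\cdots\,2(\lvl+h^\vee)\mathcal{Y}_\el(L(-1)w_\el,x_\el)\,\cdots\mathcal{Y}_\N(w_\N,x_\N) w_{\N+1}\rangle ,
\]
and then to substitute the already-established identity \eqref{eq:L(-1) to normal order} for the middle factor. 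This replaces it by a sum over $i$ of three pieces: $Y^+(a^i,x_\el)\mathcal{Y}_\el({a^i}'(0)w_\el,x_\el)$, $\mathcal{Y}_\el({a^i}'(0)w_\el,x_\el)\,Y^-(a^i,x_\el)$, and $-x_\el^{-1}\mathcal{Y}_\el((\mathcal{L}a^i)(0){a^i}'(0)w_\el,x_\el)$. The last piece is already in final form and contributes exactly the last line of \eqref{eq:KZ equation}, so the work is to simplify the contributions of the first two.

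For the $Y^+$-piece I would commute $Y^+(a^i,x_\el)$ leftward through the factors $\mathcal{Y}_{\el-1}(w_{\el-1},x_{\el-1}),\dots,\mathcal{Y}_1(w_1,x_1)$ one at a time, using \eqref{eq:p<l} (valid since each $w_p$ is a lowest weight vector). Every such move deposits a single \emph{contact term} of the form $(-x_p+x_\el)^{-1}\mathcal{Y}_p(((x_p/x_\el)^{\mathcal{L}}a^i)(0)w_p,x_p)$ with $p<\el$; once $Y^+(a^i,x_\el)$ has passed all of these factors the remaining term has the form $\langle w_0, Y^+(a^i,x_\el)(\cdots)\rangle$, which vanishes since $\langle w_0, Y^+(u,x)(\cdot)\rangle=0$ for a lowest weight $w_0$ by the computation preceding \eqref{eq:in-state}. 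Symmetrically, I would commute $Y^-(a^i,x_\el)$ rightward through $\mathcal{Y}_{\el+1}(w_{\el+1},x_{\el+1}),\dots,\mathcal{Y}_\N(w_\N,x_\N)$ using \eqref{eq:l<p}, collecting contact terms $(x_\el-x_p)^{-1}\mathcal{Y}_p(((x_p/x_\el)^{\mathcal{L}}a^i)(0)w_p,x_p)$ with $p>\el$; when $Y^-(a^i,x_\el)$ finally reaches $w_{\N+1}$, the relation \eqref{eq:in-state} (taken with $u=a^i$ and $f$ the product $\mathcal{Y}_1(w_1,x_1)\cdots\mathcal{Y}_\N(w_\N,x_\N)$) replaces it by $(x_\el^{-\mathcal{N}}a^i)(\alpha^i)w_{\N+1}\,x_\el^{-\alpha^i-1}$, and expanding $x_\el^{-\mathcal{N}}=\sum_{k\geq 0}\frac{(-1)^k}{k!}(\log x_\el)^k\mathcal{N}^k$ produces the second-to-last line of \eqref{eq:KZ equation}.

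It remains to assemble. The contact terms from $p<\el$ and from $p>\el$ are of the same shape, differing only in which of $x_p,x_\el$ is taken as the expansion variable in $(x_\el-x_p)^{-1}$, so they combine into a single sum over $p\neq\el$ once written with $\colon(x_\el-x_p)^{-1}\colon$; and using $\mathcal{S}a^i=\alpha^i a^i$ together with $\log(x_p/x_\el)=\log x_p-\log x_\el$ one expands
\[
\bigl((x_p/x_\el)^{\mathcal{L}}a^i\bigr)(0) = (x_p/x_\el)^{\alpha^i}\sum_{k_1,k_2\geq 0}\frac{(-1)^{k_2}}{k_1!\,k_2!}(\log x_p)^{k_1}(\log x_\el)^{k_2}(\mathcal{N}^{k_1+k_2}a^i)(0),
\]
a finite sum since $\mathcal{N}^{K+1}\mathfrak{g}=0$, so each contact contribution becomes the summand in the first line of \eqref{eq:KZ equation}. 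Summing the three groups of terms over $i\in\mathcal{I}$ then reproduces \eqref{eq:KZ equation}. I expect the only real obstacle to be the bookkeeping of signs and of the expansion directions of $(x_\el-x_p)^{-1}$ while commuting $Y^{\pm}(a^i,x_\el)$ through the factors $\mathcal{Y}_p$, and the verification that the $p<\el$ and $p>\el$ cases genuinely merge under the notation $\colon\cdot\colon$; beyond that, the computation is a direct substitution of the identities proved in Section~\ref{sec:Jacobi}.
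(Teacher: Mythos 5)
Your proposal is correct and follows essentially the same route as the paper's proof: substitute \eqref{eq:L(-1) to normal order}, push $Y^{+}(a^i,x_\el)$ left and $Y^{-}(a^i,x_\el)$ right via \eqref{eq:p<l} and \eqref{eq:l<p}, kill the boundary terms with the lowest-weight property of $w_0$ and \eqref{eq:in-state}, and expand $(x_p/x_\el)^{\mathcal{L}}$ and $x_\el^{-\mathcal{N}}$ using $\mathcal{N}^{K+1}\mathfrak{g}=0$. No gaps to report.
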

\begin{proof} 
We use \eqref{eq:L(-1) to normal order}, followed by repeated use of \eqref{eq:p<l} and \eqref{eq:l<p}, and then \eqref{eq:in-state} to obtain
\begin{align*}
&2(\lvl + h^\vee) \frac{\partial}{\partial x_\el} \langle w_0, \mathcal{Y}_1(w_1,x_1) \cdots \mathcal{Y}_\N(w_\N, x_\N) w_{\N+1} \rangle \\
&= 2(\lvl + h^\vee) \langle w_0, \mathcal{Y}_1(w_1,x_1) \cdots \mathcal{Y}_\el(L(-1)w_\el, x_\el) \cdots \mathcal{Y}_\N(w_\N, x_\N) w_{\N+1} \rangle \\
&= \sum_{i \in \mathcal{I}}  \Big( \langle w_0, \mathcal{Y}_1(w_1,x_1) \cdots Y^+ (a^i, x_\el)\mathcal{Y}_\el({a^i}'(0) w_\el, x_\el)  \cdots \mathcal{Y}_\N(w_\N, x_\N) w_{\N+1} \rangle  \\
& \qquad + \langle w_0, \mathcal{Y}_1(w_1,x_1) \cdots  \mathcal{Y}_\el({a^i}'(0) w_\el, x_\el) Y^- (a^i, x_\el) \cdots \mathcal{Y}_\N(w_\N, x_\N) w_{\N+1} \rangle \\ 
& \qquad - x_\el^{-1}\langle w_0, \mathcal{Y}_1(w_1,x_1) \cdots \mathcal{Y}_\el( (\mathcal{L}a^i )(0) {a^i}'(0) w_\el, x_\el ) \cdots \mathcal{Y}_\N(w_\N, x_\N) w_{\N+1} \rangle \Big) \\
&= \sum_{i \in \mathcal{I}}  \Bigg(\sum_{p \neq \el} :(x_\el - x_p)^{-1} : \langle w_0, \mathcal{Y}_1(w_1,x_1) \cdots \mathcal{Y}_\el({a^i}'(0)w_\el, \el) \\
&\quad \cdots  \mathcal{Y}_p(( (x_p/x_\el)^{\mathcal{L}}a^i)(0)w_p, x_p) \cdots \mathcal{Y}_\N(w_\N, x_\N) w_{\N+1} \rangle \\
& \quad + x_\el^{-\alpha^i -1} \langle w_0, \mathcal{Y}_1(w_1,x_1) \cdots \mathcal{Y}_\el({a^i}'(0)w_\el, x_\el) \cdots \mathcal{Y}_\N(w_\N, x_\N) (x_\el^{-\mathcal{N}}a^i)(\alpha^i) w_{\N+1} \rangle  \\
&\quad  - x_\el^{-1} \langle w_0, \mathcal{Y}_1(w_1,x_1) \cdots   \mathcal{Y}_\el( (\mathcal{L}a^i)(0) {a^i}'(0) w_\el, x_\el) \cdots \mathcal{Y}_\N(w_\N, x_\N) w_{\N+1} \rangle \Bigg) \\
&= \sum_{i \in \mathcal{I}}  \Bigg( \sum_{ p \neq l}  \sum_{k_1,k_2 = 0}^{K}  \frac{(-1)^{k_2}}{k_1!k_2!} (\log x_p)^{k_1}(\log x_\el)^{k_2} \left( \frac{x_p}{x_\el}
\right)^{\alpha^i} :(x_\el - x_p)^{-1} :  \langle w_0, \mathcal{Y}_1(w_1,x_1)  \\
& \quad \cdots \mathcal{Y}_\el({a^i}'(0)w_\el, x_\el) \cdots  \mathcal{Y}_p(( \mathcal{N}^{k_1 + k_2}a^i)(0)w_p, x_p) \cdots \mathcal{Y}_\N(w_\N, x_\N) w_{\N+1} \rangle \\
& \quad + \sum_{k = 0}^{K} \frac{(-1)^k}{k!} (\log x_l)^k x_\el^{-\alpha^i -1} \langle w_0, \mathcal{Y}_1(w_1,x_1) \cdots \mathcal{Y}_\el({a^i}'(0)\el, x_\el) \cdots \mathcal{Y}_\N(w_\N, x_\N) (\mathcal{N}^k a^i)(\alpha^i) w_{\N+1} \rangle  \\
& \quad  - x_\el^{-1} \langle w_0, \mathcal{Y}_1(w_1,x_1) \cdots   \mathcal{Y}_\el( (\mathcal{L}a^i)(0) {a^i}'(0) w_\el, x_\el) \cdots \mathcal{Y}_\N(w_\N, x_\N) w_{\N+1} \rangle \Bigg),
\end{align*}
where $ K \in \mathbb{Z}_{\geq 0} $ is chosen sufficiently large so that $ \mathcal{N}^{K + 1} \mathfrak{g} = 0 $.
\end{proof}

This is our most general version of the twisted KZ equations. Note that when we specialize to $ g = \id $, we have $ \mathcal{L} = \mathcal{S} = \mathcal{N} = 0 $, removing all $ \log $-terms and setting $ \alpha^i = 0 $. We can further choose the basis $ \{a^i\}_{i \in I} $ to be orthonormal so the twisted KZ equations above produce the original KZ equations
\begin{align*}
&(\lvl + h^\vee) \frac{\partial}{\partial x_\el} \langle w_0, \mathcal{Y}_1(w_1,x_1) \cdots \mathcal{Y}_\N(w_2,x_2) w_{\N+1} \rangle \\
&= \sum_{i \in I} \Bigg( \sum_{p \neq \el} : (x_\el - x_p)^{-1}  \langle w_0, \mathcal{Y}_1(w_1,x_1) \cdots \mathcal{Y}_\el (a^i(0) w_\el, x_\el) \cdots \mathcal{Y}_p(a^i(0) w_p,x_p) \cdots \mathcal{Y}_\N(w_\N,x_\N) w_{\N+1} \rangle \\
& \quad + x_\el^{-1} \langle w_0, \mathcal{Y}_1(w_1,x_1) \cdots \mathcal{Y}_\el (a^i(0) w_\el, x_\el)  \cdots \mathcal{Y}_\N(w_\N,x_\N) a^i(0) w_{\N+1} \rangle \Bigg),
\end{align*}
as in \cite{huanglepowskyaffine}, for example.

\begin{corollary}
Further assume that $ g $ acts semisimply. Then we have
\begin{equation}\label{eq:ss KZ eq}
\begin{aligned}
&2(\lvl + h^\vee) \frac{\partial}{\partial x_\el} \langle w_0, \mathcal{Y}_1(w_1,x_1) \cdots \mathcal{Y}_\N(w_\N, x_\N) w_{\N+1} \rangle \\
&\quad= \sum_{i \in \mathcal{I}}  \Bigg( \sum_{ p \neq \el}  \left( \frac{x_p}{x_\el}
\right)^{\alpha^i} :(x_\el - x_p)^{-1} :  \langle w_0, \mathcal{Y}_1(w_1,x_1)  \\
& \qquad \cdots \mathcal{Y}_\el({a^i}'(0)w_\el, x_\el) \cdots  \mathcal{Y}_p(a^i(0)w_p, x_p) \cdots \mathcal{Y}_\N(w_\N, x_\N) w_{\N+1} \rangle \\
& \qquad + x_\el^{-\alpha^i -1} \langle w_0, \mathcal{Y}_1(w_1,x_1) \cdots \mathcal{Y}_\el({a^i}'(0)w_\el, x_\el) \cdots \mathcal{Y}_\N(w_\N, x_\N) a^i(\alpha^i) w_{\N+1} \rangle  \\
& \qquad  - \alpha^i x_\el^{-1} \langle w_0, \mathcal{Y}_1(w_1,x_1) \cdots   \mathcal{Y}_\el(a^i(0) {a^i}'(0) w_\el, x_\el) \cdots \mathcal{Y}_\N(w_\N, x_\N) w_{\N+1} \rangle \Bigg),
\end{aligned}
\end{equation}
for each $ \el = 1, \dots, \N $.
\end{corollary}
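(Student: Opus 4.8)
The plan is to obtain \eqref{eq:ss KZ eq} by specializing the general twisted KZ equations \eqref{eq:KZ equation} to the case where the nilpotent part $\mathcal{N}$ of $\mathcal{L}$ vanishes. The assumption that $g$ acts semisimply means precisely that $\mathcal{N} = 0$ as an operator on $\mathfrak{g}$, so that $\mathcal{L} = \mathcal{S}$; in particular every basis vector $a^i$, $i \in \mathcal{I}$, satisfies $\mathcal{L} a^i = \mathcal{S} a^i = \alpha^i a^i$.

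First I would observe that, since $\mathcal{N} = 0$, every summand of \eqref{eq:KZ equation} with $k_1 + k_2 \geq 1$ (in the first group) or with $k \geq 1$ (in the second group) contains a factor $\mathcal{N}^m a^i = 0$ for some $m \geq 1$ and hence vanishes; equivalently, one may simply take $K = 0$. Thus only the terms with $k_1 = k_2 = 0$ and $k = 0$ remain, their combinatorial prefactors equal $1$, and all the logarithmic factors $(\log x_p)^{k_1}$, $(\log x_\el)^{k_2}$, $(\log x_\el)^{k}$ reduce to $1$.

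Next I would simplify the three surviving groups of terms one at a time. In the first group $(\mathcal{N}^{0} a^i)(0) w_p = a^i(0) w_p$, so the summand becomes $(x_p/x_\el)^{\alpha^i} \colon (x_\el - x_p)^{-1} \colon$ times the matrix coefficient with $\mathcal{Y}_p(a^i(0) w_p, x_p)$ inserted at slot $p$, matching the first group of \eqref{eq:ss KZ eq}. In the second group $(\mathcal{N}^{0} a^i)(\alpha^i) w_{\N+1} = a^i(\alpha^i) w_{\N+1}$, giving the second group of \eqref{eq:ss KZ eq}. In the third group I would use $(\mathcal{L} a^i)(0) = \alpha^i a^i(0)$ to pull the scalar $\alpha^i$ out of the intertwining operator, rewriting the term as $-\alpha^i x_\el^{-1}$ times the matrix coefficient with $\mathcal{Y}_\el(a^i(0) {a^i}'(0) w_\el, x_\el)$. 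Reassembling the three groups and summing over $i \in \mathcal{I}$ yields \eqref{eq:ss KZ eq}.

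Since every step is a routine substitution, there is no genuine obstacle; the only point that deserves a sentence of justification is the identification of ``$g$ acts semisimply'' with ``$\mathcal{N} = 0$ on $\mathfrak{g}$'', which is immediate because $\mathcal{L}$ enters \eqref{eq:KZ equation} only through its action on $\mathfrak{g} \cong (V_{\hat{\mathfrak{g}}}(\lvl,0))_{(1)}$, and the Jordan decomposition $\mathcal{L} = \mathcal{S} + \mathcal{N}$ has $\mathcal{N} = 0$ exactly when $g = e^{2\pi\iu\mathcal{L}}$ is semisimple there.
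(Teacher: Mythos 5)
Your proposal is correct and follows the same route as the paper: the paper's proof of this corollary is precisely the observation that semisimplicity of $g$ gives $\mathcal{L} = \mathcal{S}$, $\mathcal{N} = 0$, so that all logarithmic terms and higher $\mathcal{N}$-power terms in \eqref{eq:KZ equation} vanish. Your additional explicit checks of the three surviving groups of terms, including $(\mathcal{L}a^i)(0) = \alpha^i a^i(0)$, are just a more detailed write-up of the same specialization.
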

\begin{proof}
In the case that $ g $ acts semisimply, we have $ \mathcal{L} = \mathcal{S} $ and $ \mathcal{N} = 0 $. Since $ \mathcal{N} = 0 $, the $ \mathcal{\log} $-terms vanish. 
\end{proof}

We can compare this with the ``inner automorphism" special case derived in \cite{deBoer:2001nw}. We choose a Cartan subalgebra $ \mathfrak{h} $, a set of simple roots $ \Pi $, and an invariant symmetric bilinear form $ ( \cdot, \cdot) $. We choose and fix an element $ h \in \mathfrak{h} $ and define $ g \in \Aut(\mathfrak{g}) $ by 
\begin{equation}
g a := e^{\ad h} a , \quad \text{for all } a \in \mathfrak{g}.
\end{equation} 
Since $ g $ is given by the adjoint action of a Cartan element, we immediately have
\begin{equation}
g a = e^{\beta(h)} a, \quad \text{for all } a \in \mathfrak{g}_\beta, \ \beta \in \Delta \sqcup \{ 0\} .
\end{equation} 

We construct a basis by picking an orthonormal basis of $ \mathfrak{h} $ and a set $ \{x_\beta \in \mathfrak{g}_\beta :\beta \in \Delta_+ \} $ of non-zero vectors. Lastly, we pick $ \{y_\beta \in  \mathfrak{g}_{-\beta} : \beta \in \Delta_+ \} $ such that $ (x_\beta, y_\beta) = 1 $ for each $ \beta \in \Delta_+ $. This gives a basis $ \{ a^i \}_{i \in I} $ of eigenvectors of $ g $ that is dual to itself with respect to the $ ( \cdot, \cdot) $. (However, the basis need not be orthonormal.) In this case, the twisted KZ equations are

\begin{equation}
\begin{aligned}
&(\lvl + h^\vee)\frac{\partial}{\partial x_\el} \langle w_0, \mathcal{Y}_1(w_1,x_1) \cdots \mathcal{Y}_\N(w_\N, x_\N) w_{\N+1} \rangle \\
&\quad= \sum_{i \in I} \Bigg( \sum_{ p \neq \el} :(x_\el - x_p)^{-1} : \left( \frac{x_p}{x_\el} \right)^{\alpha^{i}} \langle w_0, \mathcal{Y}_1(w_1,x_1) \\
&\qquad\cdots \mathcal{Y}_\el({a^i}'(0)w_\el, x_\el) \cdots  \mathcal{Y}_p(a^i(0)w_p, x_p) \cdots \mathcal{Y}_\N(w_\N, x_\N) w_{\N+1} \rangle \\
&\qquad  + x_\el^{-\alpha^i -1} \langle w_0, \mathcal{Y}_1(w_1,x_1) \cdots \mathcal{Y}_\el({a^i}'(0)w_\el, x_\el) \cdots \mathcal{Y}_\N(w_\N, x_\N) a^i(\alpha^i) w_{\N+1} \rangle  \\
&\qquad - \alpha^{i} x_\el^{-1} \langle w_0, \mathcal{Y}_1(w_1,x_1) \cdots   \mathcal{Y}_\el( a^i(0) {a^i}'(0) w_\el, x_\el) \cdots \mathcal{Y}_\N(w_\N, x_\N) w_{\N+1} \rangle  \Bigg).
\end{aligned}
\end{equation}

Further assuming that $ w_{\N+1} $ is annihilated by $ \hat{\mathfrak{g}}^{[g]}_0 $, we obtain the twisted KZ equations as given in (10.44a-c) of \cite{deBoer:2001nw}. (Note that our eigenvalues $ \{\alpha^i\}_{i \in I} $ are negative of those in \cite{deBoer:2001nw} because $ Y(u,e^{2\pi \iu}z) = Y(gu,z) $ is used in (10.11a-d), instead of our convention of $ Y(gu,e^{2\pi \iu}z) = Y(u,z) $ in \eqref{eq:equivariance}. But this is simply a change between $ g $ and $ g^{-1} $.) \\

\section{Regularity of the twisted KZ equations}
\label{sec:kz regularity}
Now we assume that $ g $ is of finite order $ t $. We will prove that the ``component-isolated singularities" of the solutions to the KZ equations are ``regular singularities". \\

First, we recall some notions about functions with ``regular singularities" from \cite{DuHuang} and some theorems about solutions of differential equations with ``simple singularities" (see Section 4 of Appendix B in \cite{Knapp+1986}). 

For $ a \in \widehat{\mathbb{C}} $ and $ r \in \mathbb{R}_{>0} $, let $ \mathbb{D}^\times_r(a) $ denote the open punctured disc $ \{z \in \mathbb{C}: 0 < |z-a| < r \} $ when $ a \neq \infty $, and let $ \mathbb{D}^\times_r(\infty) $ denote the open punctured disc $ \{z \in \mathbb{C}: 0 < |z^{-1}| < r \} $ when $ a = \infty $.

\begin{definition}
Let $ f(z_1, \dots, z_n) $ be a multivalued function defined on some subset of $ \widehat{\mathbb{C}}^n $. Let $ A \in \text{GL}(n,\mathbb{C}) $ and $ (\beta_1, \dots, \beta_n) \in \mathbb{C}^n $, and consider the change of variables
\begin{equation}
(\zeta_1, \dots, \zeta_n) = (z_1, \dots, z_n) A - (\beta_1, \dots, \beta_n).
\end{equation}
Define $ g(\zeta_1, \dots, \zeta_n) = f ( (\zeta_1, \dots, \zeta_n) A^{-1} + (\beta_1, \dots, \beta_n) A^{-1} ) $. Let $ (\delta_1, \dots, \delta_n) \in \{0, \infty\}^n $. We say that $ f(z_1, \dots, z_n) $ has a \emph{component-isolated singularity} at $ (\zeta_1, \dots, \zeta_n) = (\delta_1, \dots, \delta_n) $ if there exist $ r_1, \dots, r_n \in \mathbb{R}_{>0} $ such that $ g (\zeta_1, \dots, \zeta_n) $ is defined on $  \mathbb{D}^\times_{r_1}(\delta_1) \times \cdots \times  \mathbb{D}^\times_{r_n}(\delta_n) $
\end{definition}

\begin{definition}
We say that a multivalued holomorphic function $ f(z_1, \dots, z_n) $ has a \textit{regular singularity} at $ (z_1, \dots, z_n) = (0,\dots,0) $ if there exist $ r_1, \dots, r_n \in \mathbb{R}_{>0} $ such that $ f $ has a local expansion in the region $ \D_{r_1}^{\times}(0) \times \cdots \times \D_{r_n}^{\times}(0) $ of the form 
\begin{align*}
\phi(z_1, \dots, z_n) = \sum_{j=1}^J z_1^{s_{1,j}} \cdots z_n^{s_{n,j}} (\log z_1)^{m_{1,j}} \cdots (\log z_n)^{m_{n,j}}  h_{j}(z_1, \dots, z_n),
\end{align*}
for some numbers $ s_{i,j} \in \mathbb{C} $, $ m_{i,j} \in \mathbb{Z}_{\geq 0} $, and for some holomorphic functions $ h_{j}(z_1, \dots, z_n) $ on $ \D_{r_1}(0) \times \cdots \times \D_{r_n}(0)  $. Define $ \varphi_{0}(z) = z $ and $ \varphi_{\infty}(z) = z^{-1} $. Let  $ (\delta_1, \dots, \delta_n) \in \{0, \infty\}^n $. We say that $ f(z_1, \dots, z_n) $ has a \emph{regular singularity} at $  (z_1, \dots, z_n) = (\delta_1, \dots, \delta_n) $ if $ h(\xi_1, \dots, \xi_n) =  f(\varphi_{\delta_1}(\xi_1), \dots ,\varphi_{\delta_n}(\xi_n) ) $ has a regular singularity at $ (\xi_1, \dots, \xi_n) = (0,\dots,0) $.
Let $ A \in \text{GL}(n,\mathbb{C}) $ and $ (\beta_1, \dots, \beta_n) \in \mathbb{C}^n $, and consider the change of variables
\begin{equation} \label{eq:change of variables}
(\zeta_1, \dots, \zeta_n) = (z_1, \dots, z_n) A - (\beta_1, \dots, \beta_n).
\end{equation}
Define $ g(\zeta_1, \dots, \zeta_n) = f( (\zeta_1, \dots, \zeta_n) A^{-1} + (\beta_1, \dots, \beta_n) A^{-1} ) $. Let $ (\delta_1, \dots, \delta_n) \in \{0, \infty\}^n $. We say that $ f(z_1, \dots, z_n) $ has a \emph{regular singularity} at $ (\zeta_1, \dots, \zeta_n) = (\delta_1, \dots, \delta_n) $ if $ g(\zeta_1, \dots, \zeta_n) $ has a regular singularity at $ (\zeta_1, \dots, \zeta_n) = (\delta_1, \dots, \delta_n) $.
\end{definition}

Note that we are using the term ``regular singularity" to refer to a property of a function, and not a property of a system of differential equations. Also note that a function $ f(z_1, \dots, z_n) $ with a regular singularity at $ (z_1, \dots, z_n) = (0, \dots, 0) $ is potentially singular on all of $ \D^n \backslash (\D^\times)^n $, so this function does not necessarily have an isolated singularity at $ (z_1, \dots, z_n) = (0,\dots,0) $. (In the case the singularity at $ (z_1, \dots, z_n) = (0,\dots,0) $ is isolated, it is in fact removable.) This is why we have the notion of a component-isolated singularity. \\

Let $ \mathcal{H}_U $ denote the space of holomorphic functions from $ \D^n $ to a finite-dimensional vector space $ U $. Let $ \mathcal{H}_{\End U} $ denote the space of holomorphic functions from $ \D^n $ to $ \End U $. For $ \vect{k} \in (\mathbb{Z}_{\geq 0})^n $, use $ \partial^{\vect{k}} $ to denote $ \left( \frac{\partial}{\partial z_1} \right)^{k_1} \cdots \left( \frac{\partial}{\partial z_n} \right)^{k_n} $ and use $ (z\partial)^{\vect{k}} $ to denote $ \left( z_1\frac{\partial}{\partial z_1} \right)^{k_1} \cdots \left( z_n \frac{\partial}{\partial z_n} \right)^{k_n} $. Let 
\begin{align*}
\mathcal{D} = \left\{ \sum_{\substack{\vect{k} \in (\mathbb{Z}_{\geq 0})^n \\ \text{finite sum}}} A_{\vect{k}} \partial^{\vect{k}} : A_{\vect{k}} \in \mathcal{H}_{\End U} \right\},
\end{align*}
which is naturally a left $ \mathcal{H}_{\End U} $-module by left multiplication. Let 
\begin{align*}
\mathcal{D}^* = \left\{ \sum_{\substack{\vect{k} \in (\mathbb{Z}_{\geq 0})^n \\ \text{finite sum}}} A_{\vect{k}} (z\partial)^{\vect{k}} : A_{\vect{k}} \in \mathcal{H}_{\End U} \right\},
\end{align*} 
be the $ \mathcal{H}_{\End U} $-submodule of $ \mathcal{D} $ generated by all monomials in $ \{ z_i \frac{\partial}{\partial z_i} \}_{i =1}^n $, which is also naturally an associative unital algebra over $ \mathbb{C} $. Let $ \{ D_\alpha \}_{\alpha \in A} $ be a finite set of operators from $ \mathcal{D}^* $. Let $ \mathcal{I} $ be the left ideal in $ \mathcal{D}^* $ generated by $ \{D_\alpha\}_{\alpha \in A} $. 

\begin{definition}
If $ \mathcal{D}^* / \mathcal{I} $ is finitely generated as an $ \mathcal{H}_{\End U} $-module, we say that the system of partial differential equations $ \{D_\alpha \phi = 0 \}_{\alpha \in A} $ has a \emph{simple singularity} on $ \D^n \backslash (\D^\times)^n $. 
\end{definition} 

For example, let $ U = \mathbb{C}^m $ and let $ \{ D_\alpha \}_{\alpha \in A} $ be the set of operators
\begin{align}
D_i = z_i \frac{\partial}{\partial z_i} - H_i(z), \qquad i = 1, \dots, n ,
\end{align}
where $ H_i(z) $ are $ m \times m $ matrices of holomorphic functions on $ \D^n $. Then, the system $ \{ z_i \frac{\partial}{\partial z_i} \psi = H_i(z) \psi \}_{i = 1}^n $ has a simple singularity on $ \D^n \backslash (\D^\times)^n $. \\

We have the following theorem about the form of a solution of a system with a simple singularity.

\begin{theorem}[Theorem B.16 from \cite{Knapp+1986}]\label{thm:simple sing PDE}
Let $ \{ D_\alpha \}_{\alpha \in A} $ be a finite set of operators from $ \mathcal{D}^* $ such that the system of partial differential equations $ \{D_\alpha \phi = 0 \}_{\alpha \in A} $ has a simple singularity on $ \D^n \backslash (\D^\times)^n $. Then any multivalued holomorphic solution to $ \{D_\alpha \phi = 0 \}_{\alpha \in A} $ on $ (\D^\times)^n $ has a regular singularity at $ (0,\dots,0) $. Moreover, any $ C^\infty $ solution on $ (0,1)^n $ extends to a multivalued holomorphic solution on $ (\D^\times)^n $.
\end{theorem}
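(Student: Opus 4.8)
The plan is to reduce to a first-order Fuchsian system and then run the classical several-variable Frobenius analysis: extract the monodromy, bound the growth near the coordinate hyperplanes, apply a Riemann-type extension theorem, and read off the normal form. \textbf{Reduction to a first-order system.} Fix a solution $\phi$ on $(\D^\times)^n$. Since $\mathcal{D}^*/\mathcal{I}$ is finitely generated over $\mathcal{H}_{\End U}$, pick representatives $P_1 = 1, P_2, \dots, P_m \in \mathcal{D}^*$ of a generating set and set $\Phi = (P_1\phi, \dots, P_m\phi)^{T}$, a holomorphic $U^m$-valued function on $(\D^\times)^n$. For each $i$ the operator $z_i\frac{\partial}{\partial z_i}P_j$ lies in $\mathcal{D}^*$, so its class mod $\mathcal{I}$ is an $\mathcal{H}_{\End U}$-combination of the classes of the $P_k$; evaluating on $\phi$ produces matrices $B_i$, holomorphic on $\D^n$, with $z_i\frac{\partial}{\partial z_i}\Phi = B_i(z)\Phi$. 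As every conclusion about $\Phi$ transfers to its first component $\phi$, it suffices to treat systems of this shape; note that on $(\D^\times)^n$ the coefficients $z_i^{-1}B_i$ are holomorphic, so there the system is a connection with no singularities and finite-dimensional germ of solutions.

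\textbf{Peeling off the monodromy.} Let $\mathbf{\Phi}$ be the matrix whose columns are the analytic continuations of $\Phi$ along representatives of the generators of $\pi_1((\D^\times)^n) \cong \mathbb{Z}^n$. Because this group is abelian the monodromy matrices $M_1, \dots, M_n$ commute; choosing commuting logarithms $\Lambda_i$ with $e^{2\pi\iu\Lambda_i} = M_i$, standard bookkeeping gives $\mathbf{\Phi}(z) = \mathbf{\Psi}(z)\, z_1^{\Lambda_1}\cdots z_n^{\Lambda_n}$ with $\mathbf{\Psi}$ single-valued and holomorphic on $(\D^\times)^n$; in particular $\Phi = \mathbf{\Psi}(z)\, z_1^{\Lambda_1}\cdots z_n^{\Lambda_n}\, c$ for a constant vector $c$.

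\textbf{Growth, extension, and normal form.} Restricting the $i$th equation to a ray in $z_i$ with the remaining variables frozen yields an ODE $t\frac{d}{dt}\Phi = B_i\Phi$ with $B_i$ bounded by the supremum of $\|B_i\|$ over a smaller polydisc; Gronwall's inequality bounds $\|\Phi\|$ by a constant times a power of $|z_i|^{-1}$, and iterating over the $n$ directions gives $\|\Phi(z)\| \leq C\prod_i |z_i|^{-\rho}$ near the origin, with $C, \rho$ uniform because every $B_i$ is holomorphic on the full polydisc. Hence $\mathbf{\Psi}$ is single-valued, holomorphic on $(\D^\times)^n$, and bounded by $C'\prod_i |z_i|^{-\rho'}$, so by the several-variable Riemann extension theorem $\bigl(\prod_i z_i^{N}\bigr)\mathbf{\Psi}$ extends holomorphically across all coordinate hyperplanes for $N \gg 0$; thus $\mathbf{\Psi} = z_1^{-a_1}\cdots z_n^{-a_n} H(z)$ with $a_i \in \mathbb{Z}_{\geq 0}$ and $H$ holomorphic on $\D^n$. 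Therefore $\Phi = z_1^{\Lambda_1 - a_1}\cdots z_n^{\Lambda_n - a_n} H(z) c$, and decomposing each commuting matrix $\Lambda_i - a_i$ into semisimple and nilpotent parts — expanding $z_i^{\Lambda_i - a_i}$ as $z_i^{(\text{eigenvalue})}$ times a polynomial in $\log z_i$ — exhibits $\phi$ as a finite sum $\sum_j z_1^{s_{1,j}}\cdots z_n^{s_{n,j}}(\log z_1)^{m_{1,j}}\cdots(\log z_n)^{m_{n,j}} h_j(z)$ with $h_j$ holomorphic on a polydisc, which is precisely the regular-singularity form.

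\textbf{The $C^\infty$ statement, and the main obstacle.} The reduction above turns a $C^\infty$ solution on $(0,1)^n$ into a $C^\infty$ vector solution of $\frac{\partial}{\partial z_i}\Phi = z_i^{-1}B_i\Phi$, a linear system with real-analytic coefficients on the real cube. By induction on $n$ — the base case being the elementary fact that a $C^\infty$ solution of a linear ODE with analytic coefficients is analytic, and the inductive step expanding $\Phi$ in a power series in $z_1$ whose coefficients, by the inductive hypothesis applied to the $(n-1)$-variable system in the frozen variables, are real-analytic in those variables — one obtains that $\Phi$ is real-analytic, hence extends holomorphically to a complex neighborhood meeting $(\D^\times)^n$; continuing that germ along paths in $(\D^\times)^n$, which is unobstructed because there the coefficients are holomorphic, yields the desired multivalued holomorphic solution restricting to the original. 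I expect the real work to sit in the growth step: securing the bound on $\Phi$ with constants that stay uniform as the spectator variables also tend to their singular hyperplanes, so that the residual single-valued function $\mathbf{\Psi}$ extends meromorphically across the \emph{whole} of $\D^n \setminus (\D^\times)^n$ at once rather than one hyperplane at a time.
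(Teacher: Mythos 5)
The paper does not prove this statement at all: it is quoted as Theorem B.16 of \cite{Knapp+1986}, so there is no internal proof to compare with. Your proposal is essentially a reconstruction of the standard argument behind that citation — reduce to a first-order system $z_i\frac{\partial}{\partial z_i}\Phi = B_i(z)\Phi$ using the finite generation of $\mathcal{D}^*/\mathcal{I}$, peel off commuting monodromy logarithms, prove moderate growth by radial ODE estimates, apply Riemann extension, and treat the $C^\infty$ statement by one-variable regularity with parameters — and that outline is the right one.

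Two steps need repair as written. First, the fundamental matrix $\mathbf{\Phi}$ cannot be taken to have as columns the continuations of $\Phi$ along the $n$ generating loops: those need be neither linearly independent nor span a monodromy-invariant space. The correct object is a basis of the span of \emph{all} branches of $\Phi$ over a fixed simply connected subset of $(\D^\times)^n$; this span is finite-dimensional exactly because $\Phi$ satisfies all $n$ equations, so its restriction to any path obeys a linear ODE and every branch is determined by its value at the base point. With that fix, the commuting logarithms and the identity $\Phi=\mathbf{\Psi}(z)\,z_1^{\Lambda_1}\cdots z_n^{\Lambda_n}c$ go through. Second, and more seriously, the $C^\infty$ step cannot run on ``expanding $\Phi$ in a power series in $z_1$'': a $C^\infty$ solution has no a priori power series, and real-analyticity in each variable separately does not by itself give joint analyticity, so the induction as sketched does not close. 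The working argument uses the ODE structure with parameters: write $\Phi(z_1,y)=F(z_1,y)\,\Phi(c_1,y)$, where $F$ is the fundamental solution of $\frac{\partial}{\partial z_1}F=z_1^{-1}B_1F$ with $F(c_1,y)=I$, multivalued holomorphic in $z_1\in\D^\times$ and holomorphic in $y$; the slice $\Phi(c_1,\cdot)$ satisfies the restricted $(n-1)$-variable system of the same type, so induction applies, and iterating this representation yields the multivalued holomorphic extension, which still solves the system because the defect is holomorphic and vanishes on the real cube. Finally, the uniformity issue you flag in the growth step is handled the standard way — continue radially one coordinate at a time and bound each $B_i$ by its supremum over a slightly smaller closed polydisc, which is finite and independent of the spectator variables — so it is work to write out, not an obstruction.
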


By scaling the variables, we can see that this theorem still holds if the domain $ \mathbb{D}^n $ in $ \mathcal{H}_{U} $ and $ \mathcal{H}_{\End U} $ is replaced by $ \D_{r_1}(0) \times \cdots \times \D_{r_n}(0)  $ for any $ r_1, \dots, r_n \in \mathbb{R}_{>0} $. Note that this theorem does \emph{not} show that a solution exists for any initial condition, nor that any formal solution converges. In the following section, we will use the theory of \textit{ordinary} differential equations with regular singular points to show that formal solutions converge. \\

We will now show that the solutions to the twisted KZ equations have regular singularities when $ g $ has finite order $ t $. Since $ g $ restricts to an automorphism of finite order when acting on each finite-dimensional summand $ V_{(n)} $, $ g $ is semisimple. So we can use the twisted KZ equations in the form of \eqref{eq:ss KZ eq}. Furthermore, the eigenvalues $ \mathcal{S}_g $ are all of the form $ \alpha \in \{0, 1/t, \dots, (1-t)/t \} $.

First, we will re-express the twisted KZ equations \eqref{eq:ss KZ eq} in a more implicit form. Let $ L(W) $ denote the space of lowest-weight vectors of the (twisted) module $ W $. Consider the space
\begin{align*}
(L(W_0) \otimes \cdots \otimes L(W_{\N+1}) )^* \{x_1, \dots, x_\N \} .
\end{align*}

Define the operator $ \widetilde{\Omega}_{\el p}^i $ on $ L(W_0) \otimes \cdots \otimes L(W_{\N+1}) $ to be given by
\begin{align*}
 \widetilde{\Omega}_{\el p}^i (w_0 \otimes \cdots \otimes w_{\N+1}) = \frac{1}{2(\lvl + h^\vee)} w_0 \otimes \cdots \otimes {a^i}'(0)w_\el \otimes \cdots \otimes a^i(0)w_p \otimes \cdots \otimes w_{\N+1} .
\end{align*}
Observe that $ \widetilde{\Omega}_{\el p}^i = \widetilde{\Omega}_{p \el}^{i'} $.
Define the operator $ \Omega_{\el} $ on $ L(W_0) \otimes \cdots \otimes L(W_{\N+1}) $ to be given by
\begin{align*}
\widetilde{\Omega}_{\el}(w_0 \otimes \cdots \otimes w_{\N+1}) &= \frac{1}{2(\lvl + h^\vee)} \sum_{i \in \mathcal{I}} \Big( w_0 \otimes \cdots \otimes {a^i}'(0)w_\el \otimes \cdots \otimes w_p \otimes \cdots \otimes a^i(\alpha^i) w_{\N+1} \\ 
&\qquad - \alpha^i w_0 \otimes \cdots \otimes {a^i}(0) {a^i}'(0)w_\el \otimes \cdots \otimes w_p \otimes \cdots \otimes  w_{\N+1} \Big).
\end{align*}
Note that the first term of the summand only contributes when $ \alpha_i = 0 $ and the second term only contributes when $ \alpha_i \neq  0 $.

Consider the (finite-dimensional) vector space $ U = (L(W_0) \otimes \cdots \otimes L(W_{\N+1}))^* $. The formal counterpart to $ \mathcal{H}_U $ is contained within $ U\{ x_1, \dots, x_\N\} $.

Define the operators $ \Omega_{\el p}^i $ and $ \Omega_{\el} $ on $ U\{ x_1, \dots, x_\N\} $ by
\begin{gather}
\Omega_{\el p}^i \left( \sum_{m_1, \dots, m_\N \in \mathbb{C}} f_{m_1, \dots, m_\N} x_1^{m_1} \cdots x_\N^{m_n} \right) = \sum_{m_1, \dots, m_\N \in \mathbb{C}} f_{m_1, \dots, m_\N} \circ \widetilde{\Omega}_{\el p}^i \, x_1^{m_1} \cdots x_\N^{m_\N} , \\
\Omega_{\el} \left( \sum_{m_1, \dots, m_\N \in \mathbb{C}} f_{m_1, \dots, m_\N} x_1^{m_1} \cdots x_\N^{m_\N} \right) = \sum_{m_1, \dots, m_\N \in \mathbb{C}} f_{m_1, \dots, m_\N} \circ \widetilde{\Omega}_{\el} \, x_1^{m_1} \cdots x_\N^{m_\N} .
\end{gather}  
Similarly by pre-composition, we can define operators acting on $ \mathcal{H}_U $. We will also denote these by $ \Omega_{\el p}^i $ and $ \Omega_{\el} $. Observe that $ \Omega_{\el p}^i = \Omega_{p \el}^{i'} $.
 
We will consider the twisted KZ equations as the following system of formal partial differential equations
\begin{equation}
\frac{\partial}{\partial x_\el} \psi = \left( \sum_{i \in \mathcal{I}} \sum_{ p \neq \el} \left( \frac{x_p}{x_\el}
\right) ^{\alpha^i} : (x_\el - x_p)^{-1} : \Omega_{\el p}^i + x_\el^{-1} \Omega_{\el} \right) \psi, \qquad \el = 1, \dots, \N,
\end{equation}
or as the following system of partial differential equations
\begin{equation}
\label{eq:KZ equations implicit form}
\frac{\partial}{\partial z_\el} \psi =\left( \sum_{i \in \mathcal{I}} \sum_{ p \neq \el} \left( \frac{z_p}{z_\el}
\right) ^{\alpha^i} (z_\el - z_p)^{-1} \Omega_{\el p}^i + z_\el^{-1} \Omega_{\el} \right)\psi, \qquad \el = 1, \dots, \N.
\end{equation}

By seeing where the coefficients in the right-hand side of \eqref{eq:KZ equations implicit form} are holomorphic, we can see that every solution to the twisted KZ equations is at most defined on 
\begin{equation}
M^\N = \{ (z_1, \dots, z_\N) \in \widehat{\mathbb{C}}^\N : z_i \neq  0, \infty \text{ and } z_i \neq z_j \text{ when } i \neq j  \}.
\end{equation}
Assume that we have a solution to the twisted KZ equations defined on $ M^\N $. We will show for all change of variables \eqref{eq:change of variables}, every component-isolated singularities at $ (\delta_1, \dots, \delta_\N) \in \{0, \infty\}^\N $, is a regular singularity. To show this, we will show that the system written in the new variables has a simple singularity at $ (0, \dots, 0) $ by showing that the coefficient matrix is holomorphic in some product of sufficiently small disks centered at $ 0 $. 

Assume we have a change of variables \eqref{eq:change of variables}, or equivalently
\begin{equation}
(z_1, \dots,  z_\N) = ( \zeta_1, \dots, \zeta_2) A^{-1} + (\beta_1, \dots, \beta_\N) A^{-1} =: ( \zeta_1, \dots, \zeta_\N) B + (\gamma_1, \dots, \gamma_\N).
\end{equation}

Then 
\begin{equation}
z_\el = \sum_{j=1}^\N b_{j \el} \zeta_j + \gamma_{\el} \quad \text{and} \quad \frac{\partial z_\el}{\partial \zeta_j} = b_{j \el} .
\end{equation}

So, 
\begin{align*}
\zeta_j \frac{\partial }{\partial \zeta_j} \psi &= \zeta_j \sum_{\el} \frac{\partial z_\el}{\partial \zeta_j} \frac{\partial}{\partial z_\el} \psi \\
&= \left( \zeta_j \sum_{\el} b_{j \el} \left( \sum_{i \in \mathcal{I}} \sum_{ p \neq \el} \left( \frac{z_p}{z_\el}
\right) ^{\alpha^i} (z_\el - z_p)^{-1} \Omega_{\el p}^i + z_\el^{-1} \Omega_{\el} \right) \right) \psi \\
&=  \left(  \sum_{i \in \mathcal{I}} \sum_{\el < p }\zeta_j \left( b_{j \el}  \left( \frac{z_p}{z_\el}
\right) ^{\alpha^i} (z_\el - z_p)^{-1} \Omega_{\el p}^i+   b_{j p}  \left( \frac{z_\el}{z_p}
\right) ^{\alpha^i} (z_p - z_\el)^{-1}  \Omega_{p \el}^i \right) +\sum_{\el} b_{j \el} \zeta_j z_\el^{-1} \Omega_{\el} \right) \psi \\
&=  \left(  \sum_{i \in \mathcal{I}} \sum_{\el < p } \zeta_j \left( b_{j \el}  \left( \frac{z_p}{z_\el}
\right) ^{\alpha^i} (z_\el - z_p)^{-1} \Omega_{\el p}^i + b_{j p} \left( \frac{z_\el}{z_p}
\right) ^{\alpha^{i}} (z_p - z_\el)^{-1}  \Omega_{\el p}^{i'} \right) + \sum_{\el} b_{j \el} \zeta_j z_\el^{-1} \Omega_{\el} \right) \psi \\
&=  \left( \sum_{i \in \mathcal{I}} \sum_{\el < p } \left( b_{j \el}  \left( \frac{z_p}{z_\el}
\right) ^{\alpha^i} -  b_{j p}  \left( \frac{z_\el}{z_p}
\right) ^{\alpha^{i'}} \right)  \zeta_j  (z_\el - z_p)^{-1} \Omega_{\el p}^i + \sum_{\el} b_{j \el} \zeta_j z_\el^{-1} \Omega_{\el} \right) \psi .
\end{align*}

Assume that $ (\zeta_1, \dots, \zeta_\N) = (\delta_1,\dots, \delta_\N) $ is a component-isolated singularity of a function defined on $ M^\N $. Define the function 
\begin{equation}
s(\delta) = \begin{cases}
1 &\text{if } \delta = 0, \\
-1 &\text{if } \delta = \infty. \\
\end{cases}
\end{equation}
We perform a change variables from $ \zeta_j $ to $ \eta_j $ such that
\begin{equation}
\eta_j^{s(\delta_j)t} = \zeta_j.
\end{equation}
Hence,
\begin{equation}
\eta_j \frac{\partial}{\partial \eta_j} = \eta_j \frac{\partial \zeta_j}{\partial \eta_j} \frac{\partial}{\partial \zeta_j} = \eta_j s(\delta_j)t \eta_j^{s(\delta_j)t-1} \frac{\partial}{\partial \zeta_j} = s(\delta_j) t \zeta_j \frac{\partial}{\partial \zeta_j}.
\end{equation}

This change of variables handles singularities at both $ 0 $ and $ \infty $. Furthermore, we claim that the possible multivaluedness of $ z_\el^{1/t} = \left(\sum_{k} b_{k \el} \zeta_k + \gamma_{\el} \right)^{1/t} $ as a function of $ (\zeta_1, \dots, \zeta_n) $ is turned into a single-valued function after changing variables to $ (\eta_1, \dots, \eta_n) \in \D_{r_1}(0) \times \cdots \times  \D_{r_n}(0) $.

Suppose that $ z_\el^{1/t} $ is multivalued and $ \delta_k = 0 $ when $ b_{k\el} \neq 0 $ for all $ k $. If there is still multivaluedness after choosing $ r_k > 0 $ to be arbitrarily small, then we see that $ \gamma_\el =0 $. Then we must have
$ z_\el^{1/t} = (b_{j \el} \zeta_j)^{1/t} $ for some $ j $ (to ensure we have a component-isolated singularity for a function defined on $ M^\N $), which is single-valued as a function of $ \eta_j $. 
Now suppose that $ \delta_j = \infty $ when $ b_{j\el} \neq 0 $ for some $ j $. Then we must have
$ z_\el^{1/t} = \left(\sum_{k} b_{k \el} \zeta_k + \gamma_{\el} \right)^{1/t} $ with $ b_{k \el} = 0 $ when $ \delta_k = \infty $ and $ k \neq j $ (to ensure we have a component-isolated singularity for a function defined on $ M^\N $). After choosing each $ r_j $ to be sufficiently small, we have single-valuedness as a function of $ (\eta_1, \dots, \eta_\N) $.

Hence, to obtain a differential equation with the desired singularity at $ (\zeta_1, \dots, \zeta_\N) = (\delta_1, \dots, \delta_\N) $ and single-valued coefficients, we will look at 
\begin{equation}
\eta_j \frac{\partial }{\partial \eta_j} \psi =  s(\delta_j) t\left( \sum_{i \in \mathcal{I}} \sum_{\el < p } \left( b_{j \el}  \left( \frac{z_p}{z_\el} \right) ^{\alpha^i} -  b_{j p}  \left( \frac{z_\el}{z_p} \right) ^{\alpha^{i'}} \right)  \zeta_j  (z_\el - z_p)^{-1} \Omega_{\el p}^i +  \sum_{\el}  b_{j \el} \zeta_j z_\el^{-1} \Omega_{\el} \right) \psi ,
\end{equation}
where $ \zeta_k $ and $ z_k^{1/t} $ are single-valued holomorphic functions of $ (\eta_1, \dots, \eta_\N) $. \\

We want to show that $ \left( b_{j \el}  \left( \dfrac{z_p}{z_\el}
\right) ^{\alpha^i} -  b_{j p}  \left( \dfrac{z_\el}{z_p}
\right) ^{\alpha^{i'}} \right) \zeta_j (z_\el - z_p)^{-1} $ and $  b_{j \el} \zeta_j z_\el^{-1} $ are holomorphic functions of $ (\eta_1, \dots, \eta_\N) $ on $ \D_{r_1}(0) \times \cdots \times  \D_{r_\N}(0) $ for some sufficiently small $ r_1, \dots, r_\N > 0 $. 

First we show that $ b_{j \el} \zeta_j  z_\el^{-1} $ is holomorphic. If $ b_{j \el} = 0 $, then $ b_{j \el} \zeta_j  z_\el^{-1} = 0 $, which is holomorphic. Otherwise, when $ b_{j \el} \neq 0 $, we have two cases depending on the value of $ \delta_j $. If $ \delta_j = 0 $, then the only possible singularity can come from $ z_\el \to 0 $ as $ (\eta_1,\dots, \eta_\N) \to 0 $. Then $ \gamma_\el = 0 $, and $ b_{k \el} = 0 $ for all but $ k = j $ to ensure that $ z_\el \to 0 $ and $ z_\el \neq 0 $ when $ (\eta_1,\dots, \eta_\N) \in \D_{r_1}^\times(\delta_1) \times \cdots \times \D_{r_\N}^\times(\delta_\N) $ (this is necessary since we have a component-isolated singularity for a function defined on $ M^\N $). Then $ b_{j \el} \zeta_j  z_\el^{-1} = 1 $, which is holomorphic. If $ \delta_j = \infty $, then $ b_{k \el} = 0 $ when $ \delta_k = \infty $ and $ k \neq j $ to ensure that $ z_\el \neq 0 $ when $ (\eta_1,\dots, \eta_\N) \in \D_{r_1}^\times(\delta_1) \times \cdots \times \D_{r_\N}^\times(\delta_\N) $. Then
\begin{align*}
b_{j \el} \zeta_j  z_\el^{-1} = \frac{b_{j \el}}{\sum_{k} b_{k\el} \eta_k^{s(\delta_k) t} \eta_j^{t} + \gamma_\el \eta_j^{t}} \to \frac{b_{j \el}}{b_{j \el}} = 1
\end{align*}
as $ (\eta_1, \dots, \eta_\N) \to (0,\dots, 0) $. So $ b_{j \el} \zeta_j  z_\el^{-1} $ is holomorphic in all cases.

Second we show that $ \left( b_{j \el}  \left( \dfrac{z_p}{z_\el}
\right) ^{\alpha^i} -  b_{j p}  \left( \dfrac{z_\el}{z_p}
\right) ^{\alpha^{i'}} \right) \zeta_j (z_\el - z_p)^{-1} $ is holomorphic. If $ \alpha^i = 0 $, then $ \alpha^{i'} = 0 $ and
\begin{align*}
\left( b_{j \el}  \left( \dfrac{z_p}{z_\el}
\right) ^{\alpha^i} -  b_{j p}  \left( \dfrac{z_\el}{z_p}
\right) ^{\alpha^{i'}} \right) \zeta_j (z_\el - z_p)^{-1} = \left( b_{j \el} -  b_{j p} \right) \zeta_j (z_\el - z_p)^{-1},
\end{align*}
which is holomorphic by arguments similar to those used for $ b_{j \el} \zeta_j  z_\el^{-1} $. Otherwise, $ \alpha^i \neq 0 $ and $ \alpha^{i'} = 1 - \alpha^i $. Then 
\begin{align*}
\left( b_{j \el}  \left( \dfrac{z_p}{z_\el}
\right) ^{\alpha^i} -  b_{j p}  \left( \dfrac{z_\el}{z_p}
\right) ^{\alpha^{i'}} \right) \zeta_j (z_\el - z_p)^{-1} = z_\el^{- \alpha^i} z_p^{-\alpha^{i'}} \left( b_{j \el} z_p -  b_{j p} z_\el \right) \zeta_j (z_\el - z_p)^{-1}.
\end{align*}
If $ b_{j\el} - b_{jp} = 0 $, then 
\begin{align*}
\left( b_{j \el}  \left( \dfrac{z_p}{z_\el}
\right) ^{\alpha^i} -  b_{j p}  \left( \dfrac{z_\el}{z_p}
\right) ^{\alpha^{i'}} \right) \zeta_j (z_\el - z_p)^{-1} =  - b_{j \el} \frac{\zeta_j }{z_\el^{ \alpha^i} z_p^{\alpha^{i'}}}.
\end{align*}
This is holomorphic by arguments similar to those used for $ b_{j \el} \zeta_j  z_\el^{-1} $.

If $ b_{j\el} - b_{jp} \neq 0 $, then we need to check what happens when $ \delta_j = 0 $ and $ \delta_j = \infty $.

If $ \delta_j = 0 $ and $ z_\el - z_p \to 0 $, then $ \gamma_\el - \gamma_p = 0 $ and $ b_{k \el} - b_{k p} = 0 $ for all $ k \neq j $. Then 
\begin{align*}
\left( b_{j \el}  \left( \dfrac{z_p}{z_\el}
\right)^{\alpha^i} -  b_{j p}  \left( \dfrac{z_\el}{z_p}
\right)^{\alpha^{i'}} \right) \zeta_j (z_\el - z_p)^{-1} =  \left( b_{j \el}  \left( \dfrac{z_p}{z_\el}
\right)^{\alpha^i} -  b_{j p}  \left( \dfrac{z_\el}{z_p}
\right)^{\alpha^{i'}} \right) (b_{j\el} - b_{jp})^{-1} .
\end{align*}
Since $ z_\el - z_p \to 0 $, we have $ z_\el, z_p \nrightarrow 0 $, so the only possible singularity comes from $ z_\el, z_p \to \infty $. And since $ b_{k \el} - b_{k p} = 0 $ for all $ k \neq j $, we have $ z_\el/z_p \to 1 $.

If $ \delta_j = 0 $ and $ z_\el \to 0 $, then $ z_p, z_\el - z_p \nrightarrow 0 $. So the only possible singularity comes from the term

\begin{align*}
 b_{j \el}  \left( \dfrac{z_p}{z_\el}
\right) ^{\alpha^i} \zeta_j (z_\el - z_p)^{-1} =  b_{j \el} \frac{\zeta_j}{z_\el^{\alpha^i}} \frac{1}{z_p^{\alpha^{i'}}} \left(\frac{z_\el}{z_p} - 1 \right)^{-1} ,
\end{align*}
which is holomorphic by arguments similar to those used for $ b_{j \el} \zeta_j  z_\el^{-1} $. Similar arguments work for $ \delta_j = 0 $ and $ z_p \to 0 $.\\

If $ \delta_j = \infty $, then $ z_\el - z_p = \sum_{k} (b_{k \el} - b_{kp} ) \zeta_k + \gamma_\el - \gamma_p $ with $ b_{k \el} - b_{kp} = 0 $ when $ \delta_k = \infty $ and $ k \neq j $. So
\begin{align*}
 \zeta_j (z_\el - z_p)^{-1} \to (b_{j \el} - b_{j p} )^{-1} .
\end{align*} 
Since $ b_{j \el} -b_{j p} \neq 0 $, we have  $ b_{j \el} \neq 0 $ or $  b_{j p} \neq 0 $.
If $ b_{j \el} \neq 0 $, then  $ b_{k \el} = b_{k p} = 0 $ when $ \delta_k = \infty $ and $ k \neq j $. So
\begin{align*}
\frac{z_p}{z_\el} =  \frac{\sum_{k} b_{kp} \zeta_k + \gamma_p }{\sum_{k} b_{k\el} \zeta_k + \gamma_\el} =  \frac{\sum_{k} b_{kp} \eta_k^{s(\delta_k) t}\eta_j^{t}  + \gamma_p \eta_j^{t} }{\sum_{k} b_{k \el} \eta_k^{s(\delta_k) t}\eta_j^{t} + \gamma_\el \eta_j^{t}} \to \frac{b_{j p}}{b_{j \el}}
\end{align*}
is holomorphic. And similarly if $ b_{j p} \neq 0 $.

Thus, 
\begin{align*}
\left( b_{j \el}  \left( \dfrac{z_p}{z_\el}
\right) ^{\alpha^i} -  b_{j p}  \left( \dfrac{z_\el}{z_p}
\right) ^{\alpha^{i'}} \right) \zeta_j (z_\el - z_p)^{-1}
\end{align*}
is holomorphic in all cases.

Thus we have proven the following result.

\begin{proposition}
If the order of $ g $ is finite, then the solutions to the twisted KZ equations \eqref{eq:ss KZ eq} (or equivalently \eqref{eq:KZ equations implicit form}) defined on $ M^\N $ have regular singularities at every component-isolated singularity.
\end{proposition}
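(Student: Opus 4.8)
The plan is to reduce the statement to an application of Theorem~\ref{thm:simple sing PDE} by two successive changes of variables. Since $g$ has finite order $t$, it acts semisimply on each finite-dimensional $V_{(n)}$, so $\mathcal{N} = 0$ and one may work with the system \eqref{eq:ss KZ eq}; moreover every eigenvalue $\alpha^i$ of $\mathcal{S}$ lies in $\{0, 1/t, \dots, (t-1)/t\}$. First I would rewrite \eqref{eq:ss KZ eq} in the coordinate-free form \eqref{eq:KZ equations implicit form} as a first-order linear system $\frac{\partial}{\partial z_\el}\psi = A_\el(z)\psi$ on the finite-dimensional space $U = (L(W_0)\otimes\cdots\otimes L(W_{\N+1}))^*$, with coefficients built from the operators $\Omega_{\el p}^i$ and $\Omega_\el$; inspecting where these coefficients are holomorphic shows that every solution is defined on at most $M^\N$.

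Next, given a component-isolated singularity at $(\zeta_1,\dots,\zeta_\N) = (\delta_1,\dots,\delta_\N) \in \{0,\infty\}^\N$ arising from the affine change of variables \eqref{eq:change of variables}, I would apply the further substitution $\eta_j^{s(\delta_j) t} = \zeta_j$, where $s(0) = 1$ and $s(\infty) = -1$. This substitution does two things at once: it moves any singularity at $\infty$ to the origin, since $\eta_j \frac{\partial}{\partial \eta_j} = s(\delta_j) t\, \zeta_j \frac{\partial}{\partial \zeta_j}$, and it turns the multivalued factors $z_\el^{\alpha^i}$ with $\alpha^i \in \tfrac{1}{t}\mathbb{Z}$ into single-valued holomorphic functions of $(\eta_1,\dots,\eta_\N)$ on a small polydisc. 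Establishing this single-valuedness already uses the hypothesis that the singularity is component-isolated for a function defined on $M^\N$: it forces $z_\el$, when $z_\el \to 0$, to be proportional to a single coordinate $\zeta_j$, and, when $z_\el \to \infty$, to involve the ``$\infty$''-coordinates only through $\zeta_j$.

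The technical core is to verify that, after this substitution, each coefficient appearing in $\eta_j \frac{\partial}{\partial \eta_j}\psi$, namely $b_{j\el}\,\zeta_j\, z_\el^{-1}$ and $\bigl(b_{j\el}(z_p/z_\el)^{\alpha^i} - b_{jp}(z_\el/z_p)^{\alpha^{i'}}\bigr)\zeta_j (z_\el - z_p)^{-1}$, extends holomorphically to a polydisc $\D_{r_1}(0)\times\cdots\times\D_{r_\N}(0)$ for suitably small $r_j > 0$. This is a case analysis: one splits according to whether $b_{j\el}$ and $b_{j\el} - b_{jp}$ vanish, whether $\delta_j = 0$ or $\infty$, and which of $z_\el$, $z_p$, $z_\el - z_p$ tends to $0$; in each case the component-isolated hypothesis on $M^\N$ pins down enough linear relations among the $b_{k\el}$ and $\gamma_\el$ that the apparent pole or branch point cancels (e.g.\ $b_{j\el}\zeta_j z_\el^{-1} \to 1$ and $\zeta_j(z_\el - z_p)^{-1} \to (b_{j\el} - b_{jp})^{-1}$). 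I expect this case analysis to be the main obstacle, precisely because the clause ``component-isolated singularity for a function defined on $M^\N$'' has to be invoked repeatedly and carefully to exclude configurations in which singularities would accumulate.

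Finally, holomorphy of all coefficients says that the transformed system has a simple singularity on $\D^\N \setminus (\D^\times)^\N$ in the sense of the relevant definition, so Theorem~\ref{thm:simple sing PDE} yields that every multivalued holomorphic solution has a regular singularity at $(\eta_1,\dots,\eta_\N) = (0,\dots,0)$; unwinding the two changes of variables, this is exactly a regular singularity of the original solution at $(\zeta_1,\dots,\zeta_\N) = (\delta_1,\dots,\delta_\N)$, as desired.
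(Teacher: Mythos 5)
Your sketch follows the paper's own argument essentially step for step: rewrite the system in the implicit form \eqref{eq:KZ equations implicit form}, pass to $\zeta_j\frac{\partial}{\partial\zeta_j}$ after the affine change of variables, substitute $\eta_j^{s(\delta_j)t}=\zeta_j$ to handle $\infty$ and the $t$-th roots simultaneously, verify holomorphy of the coefficients $b_{j\el}\zeta_j z_\el^{-1}$ and $\bigl(b_{j\el}(z_p/z_\el)^{\alpha^i}-b_{jp}(z_\el/z_p)^{\alpha^{i'}}\bigr)\zeta_j(z_\el-z_p)^{-1}$ by the same case analysis invoking the component-isolated hypothesis, and conclude via Theorem~\ref{thm:simple sing PDE}. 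The approach and the key technical points are the same as in the paper, and the sketch is correct.
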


\begin{remark}
We now show that simple singularities do not immediately follow from the fact that the coefficients in \eqref{eq:KZ equations implicit form} have singularities at $ z_j = 0, \infty $ and $ z_j - z_k = 0 $. Take for simplicity $ \N = 2 $ and $ g = 1 $, so that the KZ equations are 
\begin{align*}
\frac{\partial}{\partial z_1} \psi =\left( (z_1 - z_2)^{-1} \Omega_{12} + z_1^{-1} \Omega_{1} \right)\psi, \\
\frac{\partial}{\partial z_2} \psi =\left( (z_2 - z_1)^{-1} \Omega_{12} + z_2^{-1} \Omega_{2} \right)\psi,
\end{align*}
where $ \Omega_{12} = \Omega_{21} = \sum_{i \in \mathcal{I}} \Omega^i_{12} $.

Let $ z_1 = \zeta_1 + \zeta_2 + 1 $, $ z_2 = \zeta_1 - \zeta_2 + 1 $, which implies $ z_1 - z_2 = 2 \zeta_2 $. This provides a component-isolated singularity at $ (\zeta_1, \zeta_2) = (0, 0) $, and hence the KZ equations have a simple singularity at $ (\zeta_1, \zeta_2) = (0, 0) $, as we have just shown above. However, when we consider the seemingly similar system  
\begin{align*}
\frac{\partial}{\partial z_1} \psi &=\left( (z_1 - z_2)^{-1} \Omega_{12} + z_1^{-1} \Omega_{1} \right)\psi, \\
\frac{\partial}{\partial z_2} \psi &=\left( (z_1 - z_2)^{-1} \Omega_{12} + z_2^{-1} \Omega_{2} \right)\psi,
\end{align*}
we have 
\begin{align*}
\zeta_1 \frac{\partial}{\partial \zeta_1} \psi &= \left( \frac{\zeta_1}{\zeta_2} \Omega_{12} + \frac{\zeta_1}{\zeta_1 + \zeta_2 + 1} \Omega_{1} + \frac{\zeta_1}{\zeta_1 - \zeta_2 + 1} \Omega_2 \right)\psi, \\
\zeta_2 \frac{\partial}{\partial \zeta_2} \psi &= \left( \frac{\zeta_2}{\zeta_1 + \zeta_2 + 1} \Omega_{1} - \frac{\zeta_1}{\zeta_1 - \zeta_2 + 1} \Omega_2 \right)\psi. 
\end{align*}
The coefficient of the first equation is not holomorphic in any product of small discs centered at zero. Hence, the simple singularities of the twisted KZ equations follow from more than the fact that the coefficients in \eqref{eq:KZ equations implicit form} have singularities at $ z_j = 0, \infty $ and $ z_j - z_k = 0 $. 
\end{remark}
 
\begin{remark}
When dealing with first-order homogeneous linear systems of partial differential equations of the form
 \begin{align*}
 \frac{\partial}{\partial z_i} \psi = A_i \psi, \quad \text{for } i = 1, \dots, \N, 
\end{align*}  
one usually shows that the system is consistent (also known as integrable or compatible) in the sense that 
\begin{align*}
 \frac{\partial}{\partial z_j} A_i - \frac{\partial}{\partial z_i} A_j + [A_i, A_j] = 0\quad, \text{ for all } i, j = 1, \dots \N . 
\end{align*}
This guarantees a unique solution exists for any initial condition. We have not shown that the twisted KZ equations are consistent. In the following section, we will use a different method to show that solutions exist and that the formal series solution given by the correlation function converges in certain domains.
\end{remark}

\section{Convergence when $ g $ has finite order}
\label{sec:regular singularities}
In this section, we return to the setting of Section \ref{sec:DEs} while further assuming that $ g $ has finite order $ t $. We continue to generalize Huang's method in \cite{HuangDEs} by using specific filtrations of the ring $ R $ and the $ R $-module $  T $, to derive differential equations with ``regular singular points". As a consequence, we obtain the ``convergence property for products of $ \N $ twisted intertwining operators", as conjectured in \cite{Reptheoryandorbifoldcft}, in the special case that all intertwining operators are of \gtype\ among $ C_1 $-cofinite discretely graded $ V $-modules. \\

Recall that Theorem \ref{thm:simple sing PDE} does not show that a solutions exist for every initial condition, nor that every formal solution converges. Compare this to the standard theory for an ordinary differential equation with \emph{regular singular points} (see, for example, Section B1 of \cite{Knapp+1986}). 

\begin{theorem}\label{thm:reg sing ODE}
Let 
\begin{equation}\label{eq:reg sing ODE 1}
\left(\frac{d}{dz} \right)^n \psi(z) + \frac{a_{n-1}(z)}{z} \left(\frac{d}{dz} \right)^{n-1} \psi(z) + \cdots + \frac{a_{1}(z)}{z^{n-1}} \frac{d}{dz} \psi(z) +\frac{a_{0}(z)}{z^n} \psi(z) = 0, 
\end{equation}
be an ordinary differential equation such that $ a_i(z) $ are holomorphic in $ \D_{r}(0) $ for some $ r > 0 $. Then a solution exists for any initial condition at a point in $ \D_{r}^\times(0) $, and furthermore, every solution has the form of a regular singularity at $ z = 0 $. Conversely, any formal solution with a regular singularity at $ z = 0 $ converges absolutely to a multivalued holomorphic solution in $ \D_{r}^\times(0) $.
\end{theorem}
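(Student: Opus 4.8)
The plan is to follow the classical Frobenius method, reducing the scalar equation to a first-order linear system with a regular singular point. First I would pass to the Euler operator $\vartheta = z\frac{d}{dz}$: multiplying \eqref{eq:reg sing ODE 1} by $z^n$ and using the identity $z^k\bigl(\tfrac{d}{dz}\bigr)^k = \vartheta(\vartheta-1)\cdots(\vartheta-k+1)$ (monic of degree $k$ in $\vartheta$), the equation becomes $\vartheta^n\psi + b_{n-1}(z)\vartheta^{n-1}\psi + \cdots + b_0(z)\psi = 0$ with all $b_i(z)$ holomorphic in $\mathbb{D}_r(0)$. Setting $Y = (\psi, \vartheta\psi, \dots, \vartheta^{n-1}\psi)^{\mathsf T}$, this is a first-order system $\vartheta Y = A(z) Y$ with $A(z) = A_0 + A_1 z + A_2 z^2 + \cdots$ holomorphic and matrix-valued (of companion type); conversely a solution of the system recovers a solution of \eqref{eq:reg sing ODE 1} as its first component, and prescribing initial data for \eqref{eq:reg sing ODE 1} at a base point $z_0 \in \mathbb{D}_r^\times(0)$ is the same as prescribing $Y(z_0)$. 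So it suffices to analyze $\vartheta Y = A(z) Y$ near $z = 0$.

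Second, I would construct a fundamental system by an indicial-equation and recursion argument. The indicial polynomial is $\det(\lambda I - A_0)$; for a root $\lambda$ and an eigenvector $Y_0$ of $A_0$ I look for $Y = z^\lambda \sum_{k\geq 0} Y_k z^k$, and substitution forces the recursion $\bigl(A_0 - (\lambda + k) I\bigr) Y_k = -\sum_{j=1}^{k} A_j Y_{k-j}$. When $\lambda + k$ is not an eigenvalue of $A_0$ for every $k \geq 1$, each $Y_k$ is determined; since $\|(A_0 - (\lambda+k)I)^{-1}\| = O(1/k)$ as $k\to\infty$, while Cauchy's estimate gives $\|A_j\| \leq C\rho^{-j}$ for any $\rho < r$, an induction yields $\|Y_k\| \leq M s^{-k}$ for any $s < \rho$, so the series converges on $\mathbb{D}_\rho^\times(0)$ to a genuine multivalued holomorphic solution of the required shape $z^\lambda h(z)$ with $h$ holomorphic. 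To handle resonances — coincident indicial roots, roots differing by positive integers, and non-diagonalizable $A_0$ — I would instead establish the standard normal form: there is a holomorphic invertible $P(z)$ near $0$ with $P(0) = I$, produced by finitely many ``shearing'' gauge transformations that successively remove the integer-difference resonances, such that $Y = P(z)\widetilde Y$ turns the system into $\vartheta \widetilde Y = B\widetilde Y$ with $B$ a constant matrix; then $P(z)\, z^B = P(z)\, e^{B\log z}$ is a fundamental matrix, and writing $B = S + N$ in commuting semisimple-plus-nilpotent form exhibits every column in the advertised form $\sum z^{s}(\log z)^{m} h(z)$, the logarithms arising only from $N$.

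Third, existence for an arbitrary initial condition follows because the fundamental matrix $\Phi(z) = P(z) z^B$ is invertible on (the universal cover of) $\mathbb{D}_\rho^\times(0)$, so $Y(z) = \Phi(z)\Phi(z_0)^{-1} Y(z_0)$ solves the system with the prescribed value at $z_0$, and its first component solves \eqref{eq:reg sing ODE 1}; uniqueness is the usual ODE uniqueness on the simply connected punctured-disc cover. For the converse, a formal series of regular-singularity type that solves \eqref{eq:reg sing ODE 1} lies, after grouping terms whose exponents share a coset of $\mathbb{Z}$ and collecting powers of $\log z$, in a finite-dimensional space of formal solutions; this space is spanned by the finitely many convergent solutions already constructed, so the given formal series is a finite $\mathbb{C}$-linear combination of convergent ones and therefore converges absolutely on $\mathbb{D}_\rho^\times(0)$ — alternatively one feeds the formal coefficients directly into the recursion above, where the same geometric estimate forces convergence. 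The main obstacle is the resonant/logarithmic case: when eigenvalues of $A_0$ differ by integers the naive recursion degenerates (the operator $A_0 - (\lambda+k)I$ becomes singular for some small $k$), and one must carefully organize the shearing transformations and the log-power bookkeeping while keeping the convergence bounds uniform; the non-resonant case is by comparison a short direct estimate.
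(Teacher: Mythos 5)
The paper does not actually prove this theorem: it is quoted as the standard theory of an ordinary differential equation with regular singular points, with a pointer to Section B.1 of the appendix of \cite{Knapp+1986}, so there is no in-paper argument to compare against. Your proposal is essentially the classical Frobenius-method proof that the cited source supplies: rewrite the equation in terms of $\vartheta = z\frac{d}{dz}$ (this is exactly the paper's equivalent form \eqref{eq:reg sing ODE 3}), pass to the companion first-order system $\vartheta Y = A(z)Y$, construct solutions from the indicial equation via the recursion $\bigl((\lambda+k)I - A_0\bigr)Y_k = \sum_{j=1}^{k} A_j Y_{k-j}$ with the $O(1/k)$ resolvent bound and Cauchy estimates giving geometric control and hence convergence, reduce the resonant case to a constant (Euler) system to get a fundamental matrix and the logarithm structure, and deduce the converse by identifying any formal regular-singularity solution with an element of the finite-dimensional span of the convergent fundamental solutions (or by running the formal coefficients through the same recursion). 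The one inaccuracy is your normal-form claim in the resonant case: the shearing gauge transformations are, in a suitable basis, diagonal matrices of integer powers of $z$, hence not holomorphically invertible at $z = 0$, so one cannot in general arrange a holomorphic $P$ with $P(0) = I$ conjugating to $\vartheta\widetilde{Y} = B\widetilde{Y}$; the correct statement is that the combined gauge is invertible on the punctured disc and meromorphic at $0$, yielding a fundamental matrix $H(z)z^{B}$ with $H$ having at worst a finite-order pole. This does not affect the theorem, because finitely many integer shifts (and finite pole orders) can be absorbed into the exponents $s_{i,j}$ appearing in the definition of a regular singularity, but as stated your normal-form step claims more than shearing delivers and should be weakened accordingly.
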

Note that \eqref{eq:reg sing ODE 1} could be equivalently replaced with 

\begin{equation}\label{eq:reg sing ODE 3}
\left(z\frac{d}{dz} \right)^n \psi(z) + a_{n-1}(z) \left(z\frac{d}{dz} \right)^{n-1} \psi(z) + \cdots + a_{1}(z)z\frac{d}{dz} \psi(z) +a_{0}\psi(z) = 0.
\end{equation}

We will now show that the coefficients of the differential equation \eqref{eq:DE} can be chosen so that there is a regular singularity at certain prescribed points. Since $ g $ is of finite order, $ g $ is semisimple and $ \mathcal{L} = \mathcal{S} $. So we can ignore all $ \log x_i $ in $ R $, and the eigenvalues for $ \mathcal{S} $ are in $ \frac{1}{t}\mathbb{Z} $. Hence, we will use
\begin{align*}
R = \mathbb{C}[x_i^{\pm 1/t},(x_i - x_j)^{-1}: i, j = 1, \dots, \N  \text{ with } i < j ] .
\end{align*} 
We define the \emph{degree} of a monomial $ x_i^{p}(x_j - x_k)^{q} $, $ p \in \frac{1}{t} \mathbb{Z} $ and $ q \in \mathbb{Z}_{\leq 0} $ to be $ p + q $. An element $ f(x_1, \dots, x_\N) \in R $ has \emph{degree} $ m \in \frac{1}{t} \mathbb{Z} $, denoted by $ \degree f(x_1, \dots, x_\N) = m $, if $ f(x_1, \dots, x_\N) $ is a $ \mathbb{C} $-linear combination of degree $ m $ monomials.

Let $ c = x_i $  or $ c = \ x_i - x_j $ for $ i , j = 1 ,\dots, \N $ with $ i < j $. To study the singularity $ c = 0 $, we use a certain filtration of $ R $. We first define the subrings 
\begin{gather*}
R^{(c = 0)} = \mathbb{C}[x_i^{\pm 1/t},(x_i - x_j)^{-1}: i, j = 1, \dots, \N  \text{ with } i < j, \ \text{ excluding negative powers of } c] .
\end{gather*}

Define $ R^{(c = 0)}_{(\deg \ 0)} $ to be the subspace of degree zero elements in $ R^{(c = 0)} $. We define a filtration of $ R $ with 
\begin{equation}
F_m^{(c = 0)}(R) = \Span_\mathbb{C} \{ f(x_1,\dots, x_\N) \in R : f(x_1,\dots, x_\N) c^m \in R^{(c=0)} \}
\end{equation}
with $ m \in \mathbb{Z}_{\geq 0} $ or $ m \in \frac{1}{t}\mathbb{Z}_{\geq 0} $ if $ c $ is proportional to $ x_1, \dots, x_\N $. Define $ F_r^{(c = 0)} (T) $ to be the vector subspace of $ T $ spanned by $ f(x_1,\dots, x_\N) w_0 \otimes \cdots \otimes w_{\N+1} $ for all $ f(x_1,\dots, x_\N) \in F_m^{(c=0)}(R) $ and homogeneous $ w_i \in W_i $ satisfying $ m + \sigma \leq r $. Define $ F_r^{(c = 0)} (J) = F_r^{(c = 0)} (T) \cap J $. These are compatible filtrations on the $ R $-modules $ T $ and $ J $ in the sense that $ F_m^{(c=0)}(R) F_r^{(c=0)}(T) \subseteq F_{m+r}^{(c=0)}(T) $ and $ F_m^{(c=0)}(R) F_r^{(c=0)}(J) \subseteq F_{m+r}^{(c=0)}(J) $. Define
\begin{align*}
T^{(c = 0)} = R^{(c = 0)} \otimes W_0 \otimes \cdots \otimes W_{\N+1},
\end{align*}
with grading $ T^{(c=0)} = \coprod_{m \in \mathbb{R}} T^{(c=0)}_{(m)} $ given by the grading by real components of the weights for $ W_i $, with trivial grading for $ R^{(c = 0)} $. Then $ F_m^{(c=0)}(R) T_{(r)}^{(c=0)} \subseteq F_{m+r}^{(c=0)}(T) $. 

We recall the generators $ \mathcal{A}_i(u,w_0, \dots, w_{\N+1}) $ as defined in Section \ref{sec:DEs}, but now with $ g $ of finite order.

\begin{equation}
\begin{aligned}
&\mathcal{A}_0(u,w_0, \dots, w_{\N+1}) \\
& \quad = u_{\alpha'-1} w_0 \otimes w_1 \otimes \dots \otimes w_{\N+1} \\
& \qquad + \sum_{p=1}^\N \sum_{k \geq 0} {\alpha -1 \choose k} x_p^{1+k-\alpha} w_0 \otimes \dots \otimes \Op_\pm (x_p^{-1}) u_{k} \Op_{\pm}^{-1}(x_p^{-1})w_p \otimes \dots \otimes w_{\N+1} \\
&\qquad - w_0 \otimes \dots \otimes w_\N \otimes u_{\alpha'-1}^* w_{\N+1},
\end{aligned}
\end{equation}

\begin{equation}
\begin{aligned}
&\mathcal{A}_\el(u,w_0,\dots, w_{\N+1}) \\
&\quad = -\sum_{k \geq 0} x_\el^{-\alpha + k} u_{\alpha - 1 - k}^* w_0 \otimes w_1 \otimes \dots \otimes w_{\N+1} \\
& \qquad - \sum_{\substack{p=1,\dots,\N \\ p \neq \el}} \sum_{j,k \geq 0}{\alpha \choose k} x_p^{\alpha-k} x_\el^{-\alpha}  (x_\el - x_p)^{-1-j}  w_0 \otimes \dots \otimes  u_{j+k} w_p \otimes \dots \otimes w_{\N+1} \\
& \qquad +\sum_{k \geq 0} {\alpha \choose k} x_\el^{- k}  w_0  \otimes \dots \otimes u_{-1+k} w_\el \otimes \dots \otimes w_{\N+1}  \\
& \qquad - \sum_{k \geq 0} x_\el^{-\alpha - k - 1}
w_0 \otimes \dots \otimes w_\N \otimes u_{\alpha + k} w_{\N+1},
\end{aligned}
\end{equation}

\begin{equation}
\begin{aligned}
&\mathcal{A}_{\N+1}(u,w_0,\dots, w_{\N+1}) \\
&\quad = -u_{\alpha - 1}^* w_0 \otimes w_1 \otimes  \dots \otimes w_{\N+1} \\
& \qquad + \sum_{p =1}^\N \sum_{k \geq 0} { \alpha - 1 \choose k} x_p^{-1 - k+ \alpha}  w_0 \otimes \dots \otimes   u_{k} w_p \otimes \dots \otimes w_{\N+1}\\
&\qquad + w_0 \otimes \dots \otimes w_\N \otimes u_{\alpha-1} w_{\N+1} ,
\end{aligned}
\end{equation}

\begin{lemma}\label{lem:(c=0) grade shift}
Let $ u \in V^{[\alpha]}_+ $, for $ \alpha \in \mathbb{C} $ with $ \Re(\alpha) \in [0,1) $, and let $ w_i \in W_i $ be weight homogeneous. Let 
\begin{gather*}
w^{(0)} =  u_{\alpha' - 1} w_0 \otimes w_1 \otimes \cdots \otimes w_{\N+1}, \qquad w^{(\N+1)} = w_0 \otimes \cdots \otimes w_\N \otimes u_{\alpha-1}w_{\N+1} \\
\text{or } \ w^{(\el)} = w_0 \otimes  \cdots \otimes w_{\el-1} \otimes u_{-1} w_\el \otimes w_{\el+1} \cdots \otimes \cdots \otimes w_{\N+1},  \quad \el = 1, \dots, \N ,
\end{gather*} 
and assume $ \Re (\wt w^{(i)}) = s $. Then $ w^{(i)} \in F_s^{(c = 0)}(J) + \coprod_{m > 0} F_m^{(c=0)} T_{(s-m)}^{(c=0)} $. 
\end{lemma}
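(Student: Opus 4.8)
The plan is to run the proof of Lemma~\ref{lem:grade shift} again, but with the bookkeeping upgraded so that it records, for each monomial-times-vector summand, the order of the pole at $ c = 0 $ in addition to the real part of the weight. Write $ \nu_c(f) $ for the order of the pole of $ f(x_1,\dots,x_\N) \in R $ at $ c = 0 $, so that $ f \in F^{(c=0)}_{\nu_c(f)}(R) $, with $ \nu_c(f) \in \mathbb{Z}_{\ge 0} $ when $ c = x_i - x_j $ and $ \nu_c(f) \in \frac{1}{t}\mathbb{Z}_{\ge 0} $ when $ c $ is one of the $ x_i $. First I would solve the defining identity of the appropriate generator of $ J $ for $ w^{(i)} $: the first summand of $ \mathcal{A}_0 $ is $ w^{(0)} $, the last summand of $ \mathcal{A}_{\N+1} $ is $ w^{(\N+1)} $, and the $ k = 0 $ term of the third sum in $ \mathcal{A}_\el $ is $ w^{(\el)} $; moving it to the other side gives in each case
\begin{equation*}
w^{(i)} = \pm\, \mathcal{A}_i(u,w_0,\dots,w_{\N+1}) + \sum (\text{remaining terms}).
\end{equation*}
The lemma then reduces to two claims: that $ \mathcal{A}_i(u,w_0,\dots,w_{\N+1}) \in F^{(c=0)}_s(J) $, and that each remaining term lies in $ \coprod_{m>0} F^{(c=0)}_m(R)\, T^{(c=0)}_{(s-m)} $.

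For the first claim, since $ \mathcal{A}_i(u,w_0,\dots,w_{\N+1}) $ lies in $ J $ by definition, it suffices to check that it lies in $ F^{(c=0)}_s(T) $, i.e.\ that every monomial-times-vector summand $ f \otimes v $ occurring in $ \mathcal{A}_i $ satisfies $ \nu_c(f) + \Re(\wt v) \le s $. The real parts $ \Re(\wt v) $ are exactly the ones already computed in the proof of Lemma~\ref{lem:grade shift} (now with $ \Re(\alpha) = \alpha $, $ \Re(\alpha') = \alpha' $, and eigenvalues of $ \mathcal{S} $ in $ \frac{1}{t}\mathbb{Z} $, since $ g $ has finite order $ t $); the numbers $ \nu_c(f) $ are read off the monomials, for instance $ \nu_{x_\el}\!\big(x_\el^{-\alpha-k-1}\big) = \alpha + k + 1 $, $ \nu_{x_\el-x_p}\!\big(x_p^{\alpha-k}x_\el^{-\alpha}(x_\el-x_p)^{-1-j}\big) = 1+j $, and $ \nu_c $ vanishes for the remaining (finitely many) choices of $ c $. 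Combining the two inputs gives $ \nu_c(f) + \Re(\wt v) \le s $ in every case, equality occurring only in a few extremal summands (such as the $ u_{\alpha+k}w_{\N+1} $ term relative to $ c = x_\el $, or $ w^{(\el)} $ itself relative to any $ c $).

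For the second claim I would reuse the \emph{strict} inequalities $ \Re(\wt v) < s $ that were proved for the remaining terms in Lemma~\ref{lem:grade shift}: set $ m = s - \Re(\wt v) > 0 $, so that $ v \in T^{(c=0)}_{(s-m)} $, and observe that the bound $ \nu_c(f) + \Re(\wt v) \le s $ from the first claim now reads $ \nu_c(f) \le m $, i.e.\ $ f \in F^{(c=0)}_m(R) $. Hence each remaining term lies in $ F^{(c=0)}_m(R)\, T^{(c=0)}_{(s-m)} $ with $ m > 0 $, and summing over the finitely many such terms completes the argument.

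The step I expect to be the main obstacle is the treatment of the summand of $ \mathcal{A}_0 $ built from $ \Op_\pm(x_p^{-1})\, u_k\, \Op_\pm^{-1}(x_p^{-1}) w_p $, since there the monomial is not visible until one expands. Because $ g $ has finite order one has $ \mathcal{L} = \mathcal{S} $, which acts on $ V^{[\alpha]} $ as the scalar $ \alpha $ and satisfies $ \wt\mathcal{L} = 0 $; this already reduces $ x_p^{-\mathcal{L}}\binom{\mathcal{L}-1}{k}u $ to $ \binom{\alpha-1}{k}x_p^{-\alpha}u $. Expanding the remaining $ \Op_\pm $ factors with $ \wt L(0) = 0 $ and $ \wt L(1) = -1 $ (the expansion is finite by local nilpotence of $ L(1) $ and lower truncation), this summand becomes a finite sum of terms $ x_p^{\,P}\cdot(\text{weight-homogeneous vector}) $ in which the exponent $ P $ and the weight of the vector are tied together so that $ m = s - \Re(\wt v) > 0 $ and $ \nu_{x_p}\!\big(x_p^{P}\big) = \max(0,-P) \le m $, using $ \wt u \ge 1 $ for $ u \in V_+ $; since $ x_p^{P} $ involves only $ x_p $, its $ \nu_c $ vanishes for every other $ c $, so no new cases arise. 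Getting this $ \Op_\pm $-expansion right, and more generally keeping track of orders of poles rather than merely degrees of monomials (in particular the fractional filtration steps that occur when $ c $ is one of the $ x_i $), is where the care is needed; the rest is the weight bookkeeping already carried out in Lemma~\ref{lem:grade shift}.
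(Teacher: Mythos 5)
Your proposal is correct and follows essentially the same route as the paper, which omits the details of this proof and refers back to Lemma \ref{lem:grade shift}: you isolate $w^{(i)}$ inside the generator $\mathcal{A}_i(u,w_0,\dots,w_{\N+1})$, check that $\mathcal{A}_i \in F_s^{(c=0)}(T)\cap J$ by bounding pole order at $c=0$ plus real weight for each summand, and place the remaining terms in $\coprod_{m>0}F_m^{(c=0)}(R)\,T^{(c=0)}_{(s-m)}$ using the strict weight drop together with the same pole-order bound. The inequalities you cite (including the $\Op_\pm$-expansion bookkeeping, which hinges on $\alpha+\alpha' < 2 \le 2\wt u$) all check out.
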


\begin{proof}
This follows from an explicit case-by-case check that $ \mathcal{A}_i(u,w_0, \dots ,w_{\N+1}) \in F_s^{(c = 0)}(T) $ and that $ \mathcal{A}_i(u,w_0, \dots ,w_{\N+1}) - w^{(i)} \in F_{s-1/t}(T) $, which we omit (similar to the proof of Lemma \ref{lem:grade shift}).
\end{proof}

\begin{proposition} \label{prop:T^(c=0) = J + F_M(T)}
There exists $ M \in \mathbb{Z} $ such that for any $ r \in \mathbb{R} $, $ F_r^{(c = 0)}(T) \subseteq F_r^{(c = 0)}(J) + F_M(T) $. 
\end{proposition}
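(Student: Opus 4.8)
The plan is to mimic the proof of Proposition~\ref{prop:T = J + F_M(T)}, but now working with the $c$-adapted filtration $F_\bullet^{(c=0)}$ rather than the weight filtration $F_\bullet$. First I would fix the same integer $M$ as in Proposition~\ref{prop:T = J + F_M(T)}: since $\dim W_i/C_1(W_i) < \infty$ for each $i$, there are $M_i \in \mathbb{Z}$ so that every homogeneous element of $(W_i)_{[m]}$ with $\Re(m) > M_i$ lies in $C_1(W_i)$, and I set $M = \sum_{i=0}^{\N+1} M_i$. The key structural observation, exactly as in \eqref{eq:inclusion}, is that any homogeneous element of $T^{(c=0)}$ whose weight has real part exceeding $M$ lies in $\sum_{i=0}^{\N+1} R^{(c=0)} \otimes W_0 \otimes \cdots \otimes C_1(W_i) \otimes \cdots \otimes W_{\N+1}$, i.e.\ it is (a sum of) terms of the form $f(x_1,\dots,x_\N) w_0 \otimes \cdots \otimes u_{\alpha-1} w_p \otimes \cdots \otimes w_{\N+1}$ for $u \in V_+^{[\alpha]}$ and $f \in R^{(c=0)}$ (and similarly for the $C_1$-generators attached to $W_0$ and $W_{\N+1}$, which involve $u_{\alpha'-1}$).

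Next I would run an induction on $r$ (legitimate because $T$ is discretely graded, so the set of relevant weights is well-ordered in each bounded region) to show $F_r^{(c=0)}(T) \subseteq F_r^{(c=0)}(J) + F_M(T)$. The base case is $F_r^{(c=0)}(T) \subseteq F_M(T)$ for $r \leq M$, which holds since degree-$m$ pieces with $m \geq 0$ only raise the real part of the weight. For the inductive step, I take a spanning element $f(x_1,\dots,x_\N) w_0 \otimes \cdots \otimes w_{\N+1}$ with $f \in F_m^{(c=0)}(R)$, homogeneous $w_i$, and $m + \sigma \leq r$; if $\sigma \leq M$ the element is already in $F_M(T)$, so I may assume $\sigma > M$, hence one of the $w_i$ lies in $C_1(W_i)$ and the element is a linear combination of the three types $w^{(i)}$ appearing in Lemma~\ref{lem:(c=0) grade shift}, multiplied by an element of $R^{(c=0)}$ (and possibly by further powers of $c^{-1}$ absorbed into $F_m^{(c=0)}(R)$). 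By Lemma~\ref{lem:(c=0) grade shift}, each $w^{(i)}$ with $\Re(\wt w^{(i)}) = s$ lies in $F_s^{(c=0)}(J) + \coprod_{m' > 0} F_{m'}^{(c=0)} T^{(c=0)}_{(s - m')}$; multiplying through by the ambient $R^{(c=0)}$-factor (using the compatibility $F_{m}^{(c=0)}(R)\,F_{s}^{(c=0)}(J) \subseteq F_{m+s}^{(c=0)}(J)$ and $F_m^{(c=0)}(R)\,T^{(c=0)}_{(r')} \subseteq F_{m+r'}^{(c=0)}(T)$) shows our element lies in $F_r^{(c=0)}(J)$ plus a sum of terms in $F_{r'}^{(c=0)}(T)$ with $r' < r$. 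The induction hypothesis finishes these lower-order terms, giving $F_r^{(c=0)}(T) \subseteq F_r^{(c=0)}(J) + F_M(T)$.

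The step I expect to be the main obstacle is making precise the bookkeeping in the inductive step: one must check that the ``extra'' terms produced by subtracting the generators $\mathcal{A}_i(u,w_0,\dots,w_{\N+1})$ genuinely drop in the $(c=0)$-filtration by a positive amount (that is, the $\coprod_{m'>0}$ in Lemma~\ref{lem:(c=0) grade shift} really does reduce $r$ to some $r' < r$), and that all the rational-function coefficients appearing in $\mathcal{A}_0, \mathcal{A}_\el, \mathcal{A}_{\N+1}$ — including the binomial coefficients $\binom{\alpha}{k}$, the powers $x_p^{\alpha - k}$, $x_\el^{-\alpha}$, and the conjugations by $\Op_\pm(x_p^{-1})$ — have the expected $(c=0)$-degree, so that multiplying $w^{(i)}$ by them keeps everything inside the asserted filtered piece. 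Since $g$ has finite order this is a finite, purely combinatorial check on degrees in $R = \mathbb{C}[x_i^{\pm 1/t}, (x_i - x_j)^{-1}]$ — no logarithms intervene — and it is closely parallel to the verification in Lemma~\ref{lem:grade shift}, so I would state it as a routine case analysis and record only the degree count for each of the three families of generators.
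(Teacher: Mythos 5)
Your overall strategy matches the paper's: the same $M$ as in Proposition \ref{prop:T = J + F_M(T)}, the base case $r \le M$, and an induction driven by Lemma \ref{lem:(c=0) grade shift} together with compatibility of the filtrations. However, there is a genuine slip in your inductive step. Take a spanning element $f\, w_0\otimes\cdots\otimes w_{\N+1}$ with $f\in F_m^{(c=0)}(R)$, $m+\sigma\le r$ and $\sigma>M$. Lemma \ref{lem:(c=0) grade shift} writes the tensor factor as an element of $F_\sigma^{(c=0)}(J)$ plus terms in $F_{m'}^{(c=0)}(R)\,T^{(c=0)}_{(\sigma-m')}$ with $m'>0$. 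Once you multiply through by $f$, those remainder terms lie in $F_{m+m'}^{(c=0)}(R)\,T^{(c=0)}_{(\sigma-m')}\subseteq F_{(m+m')+(\sigma-m')}^{(c=0)}(T)=F_{m+\sigma}^{(c=0)}(T)$; in the boundary case $m+\sigma=r$ (the typical case) they sit in $F_r^{(c=0)}(T)$ again, not in some $F_{r'}^{(c=0)}(T)$ with $r'<r$ as you assert. What strictly decreases is the weight $\sigma-m'$ of the tensor factor, not the filtration index, so an induction literally on $r$ with the hypothesis you state does not close.

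The repair is exactly what the paper does: apply the induction hypothesis to the bare graded piece $T^{(c=0)}_{(\sigma-m')}$, which lies in $F_{\sigma-m'}^{(c=0)}(T)$ with $\sigma-m'<\sigma\le r$, \emph{before} multiplying by the ring coefficient; then use $F_{m+m'}^{(c=0)}(R)\,F_{\sigma-m'}^{(c=0)}(J)\subseteq F_{m+\sigma}^{(c=0)}(J)\subseteq F_r^{(c=0)}(J)$ together with the fact that $F_M(T)$ is stable under multiplication by $R$ (the grading on $R$ is trivial, so $R$-multiplication does not change weight). Equivalently, run the induction on the tensor weight to prove $T^{(c=0)}_{(s)}\subseteq F_s^{(c=0)}(J)+F_M(T)$ for all $s$, and then deduce the statement for $F_r^{(c=0)}(T)$ by splitting each spanning element according to whether $\sigma\le M$ or $\sigma>M$. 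One further remark: the degree bookkeeping inside the generators $\mathcal{A}_i$ that you flag as the main obstacle is not something you need to redo at this point; it is precisely the content of Lemma \ref{lem:(c=0) grade shift}, which you are entitled to cite as a black box here.
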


\begin{proof}
Choose the same $ M $ as in Proposition \ref{prop:T = J + F_M(T)}. When $ r \leq M $, $ F_r^{(c=0)}(T) $ is spanned by elements of the form $ f(x_1, \cdots, x_\N) w_0 \otimes \cdots \otimes w_{\N+1} $ with $ f(x_1, \cdots, x_{\N+1}) \in F_m^{(c=0)}(R) $ and $ m + \sigma \leq r \leq M $. Since $ \sigma \leq M $, this element is in $ F_M(T) $. Hence, $ F_r^{(c=0)}(T) \subseteq  F_M(T) \subseteq F_{r}^{(c = 0)}(J) + F_M(T) $. 
Since $ T $ is discretely graded, we can proceed by induction on $ r $. Assume that $ s > M $ and $ F_r^{(c = 0)}(T) \subseteq F_{r}^{(c = 0)}(J) + F_M(T) $ whenever $ r < s $. Consider an element $ w \in T_{(s)}^{(c = 0)} $. Since $ s > M $, $ T_{(s)}^{(c = 0)} $ is $ R^{(c=0)} $-spanned by elements $ w $ in Lemma \ref{lem:(c=0) grade shift}. Hence, $ w \in F_{s}^{(c = 0)}(J) + \coprod_{m > 0} F^{(c=0)}_{m}(R) T^{(c = 0)}_{(s - m)} $. Since $ s - m < s $ when $ m >0 $, we have $ T_{(s-m)}^{(c=0)} \subseteq F^{(c=0)}_{s-m}(J) + F_M(T) $ by the induction hypothesis. So, $ F^{(c=0)}_{m}(R) T_{(s-m)}^{(c=0)} \subseteq F^{(c=0)}_{s}(J) + F_M(T) $. So, $ T_{(s)}^{(c = 0)} \subseteq F^{(c=0)}_{s}(J) + F_M(T) $. Thus, $ F_s^{(c=0)}(T) \subseteq  F^{(c=0)}_{s}(J) + F_M(T) $.
\end{proof}

\begin{lemma}\label{lem:sigma+S}
For any $ s \in [0,1) $, there exists $ S \in \mathbb{R} $ such that $ s + S \in \mathbb{Z}_{>0} $, and for any homogeneous $ w_i \in W_i $ and  any $ \mathcal{W}_1 \in F_\sigma^{(c = 0)}(J), \mathcal{W}_2 \in  F_M(T) $ satisfying $ \sigma \in s + \mathbb{Z} $ and
\begin{align*}
w_0 \otimes \cdots \otimes w_{\N+1} = \mathcal{W}_1 + \mathcal{W}_2,
\end{align*} we have
\begin{align*}
c^{\sigma + S} \mathcal{W}_2 \in T^{(c = 0)}.
\end{align*}
\end{lemma}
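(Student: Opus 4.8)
The plan is to read off, in a fixed weight--homogeneous basis of $W_0 \otimes \cdots \otimes W_{\N+1}$, exactly which powers of $c$ can occur in $\mathcal{W}_2$, and then to observe that the discrete grading forces those powers to be bounded by a constant depending only on $s$ and the (already fixed) $M$.

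First I would record two structural descriptions of the filtrations. Fix a basis $\{e_\beta\}$ of $W_0 \otimes \cdots \otimes W_{\N+1}$ consisting of weight--homogeneous vectors, write $\tau_\beta = \Re(\wt e_\beta)$, and expand every element of $T$ uniquely as $\sum_\beta r_\beta e_\beta$ with $r_\beta \in R$, finitely many nonzero. Since $R$ carries the trivial grading, $F_M(T) = R \otimes \coprod_{\Re(n)\le M}(W_0\otimes\cdots\otimes W_{\N+1})_{[n]}$, so $\sum_\beta r_\beta e_\beta \in F_M(T)$ exactly when $r_\beta = 0$ whenever $\tau_\beta > M$. By the discrete grading hypothesis the space $P_M := \coprod_{\Re(n)\le M}(W_0\otimes\cdots\otimes W_{\N+1})_{[n]}$ is finite--dimensional, so only finitely many $\beta$ have $\tau_\beta \le M$; assuming $P_M \neq 0$ (the case $P_M = 0$ is trivial, as then $\mathcal{W}_2 = 0$), $\mu := \min\{\tau_\beta : \tau_\beta \le M\}$ is a real number depending only on $M$. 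Second, unwinding the definition of $F_r^{(c=0)}(T)$ shows it is weight--graded and that $\sum_\beta r_\beta e_\beta \in F_r^{(c=0)}(T)$ exactly when each coefficient satisfies $r_\beta c^{m_\beta} \in R^{(c=0)}$ for some valid exponent $m_\beta \le r - \tau_\beta$.

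Now I would choose $S$. Pick the smallest positive integer $n$ with $n \ge s - \mu$ and set $S = n - s$, so that $s + S = n \in \mathbb{Z}_{>0}$, $S \ge -\mu$, and $S$ depends only on $s$ and $M$; note also $\sigma + S \in \mathbb{Z}$ since $\sigma \in s + \mathbb{Z}$, so $c^{\sigma+S}$ is a genuine element of $R$. To verify the conclusion: $w_0 \otimes \cdots \otimes w_{\N+1} = 1\cdot(w_0\otimes\cdots\otimes w_{\N+1}) \in F_\sigma^{(c=0)}(T)$ because $1 \in R^{(c=0)} = F_0^{(c=0)}(R)$ and $0 + \sigma \le \sigma$, and $\mathcal{W}_1 \in F_\sigma^{(c=0)}(J) \subseteq F_\sigma^{(c=0)}(T)$, hence $\mathcal{W}_2 = (w_0\otimes\cdots\otimes w_{\N+1}) - \mathcal{W}_1 \in F_\sigma^{(c=0)}(T) \cap F_M(T)$. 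Writing $\mathcal{W}_2 = \sum_\beta r_\beta e_\beta$, for every $\beta$ with $r_\beta \neq 0$ we get from $F_M(T)$ that $\tau_\beta \le M$, hence $\tau_\beta \ge \mu$, and from $F_\sigma^{(c=0)}(T)$ that $r_\beta c^{m_\beta}\in R^{(c=0)}$ for some valid $m_\beta \le \sigma - \tau_\beta$. Since $\sigma + S \ge \sigma - \mu \ge \sigma - \tau_\beta \ge m_\beta$, the exponent $\sigma + S - m_\beta$ is a nonnegative valid exponent, so $c^{\sigma+S-m_\beta} \in R^{(c=0)}$ (as $c^m \in R^{(c=0)}$ for every valid nonnegative $m$), giving $c^{\sigma+S}r_\beta = (r_\beta c^{m_\beta})\,c^{\sigma+S-m_\beta} \in R^{(c=0)}$. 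Summing over $\beta$ yields $c^{\sigma+S}\mathcal{W}_2 \in R^{(c=0)} \otimes W_0 \otimes \cdots \otimes W_{\N+1} = T^{(c=0)}$.

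The only genuine content — and the point I would take care with — is the \emph{uniformity} of $S$: it must depend on $s$ and the globally fixed $M$ only, not on the particular homogeneous $w_i$ nor on the particular decomposition $\mathcal{W}_1 + \mathcal{W}_2$. This is precisely what finite--dimensionality of $P_M$ provides, by bounding the weights $\tau_\beta$ that can appear in an element of $F_M(T)$ from below, hence bounding the $c$--pole order of its coefficients from above. The rest is bookkeeping: that $w_0\otimes\cdots\otimes w_{\N+1}$ itself lies in $F_\sigma^{(c=0)}(T)$, keeping track of which exponents are valid ($\mathbb{Z}_{\ge 0}$ when $c = x_i - x_j$, versus $\tfrac{1}{t}\mathbb{Z}_{\ge 0}$ when $c$ is proportional to some $x_i$), and that $\sigma + S \in \mathbb{Z}$ so that $c^{\sigma+S}\mathcal{W}_2$ even makes sense in $T$.
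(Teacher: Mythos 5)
Your proof is correct and is essentially the paper's argument: both rest on the same key inequality --- the allowed $c$-pole order of a coefficient in $F_\sigma^{(c=0)}(T)$ is at most $\sigma$ minus the real weight of the accompanying homogeneous tensor, which is $\leq \sigma + S$ once $S$ is chosen (using the discrete grading) as a lower bound for the negatives of the weights, so multiplying by $c^{\sigma+S}$ lands in $R^{(c=0)}$. The only cosmetic difference is that you apply this bound coordinate-wise to $\mathcal{W}_2$, using $\mathcal{W}_2 \in F_M(T)$ to sandwich its weights in $[\mu, M]$, whereas the paper chooses $S$ so that $T_{(r)} = 0$ for $r < -S$ and applies the bound directly to $\mathcal{W}_1 \in F_\sigma^{(c=0)}(J)$ and to the pure tensor $c^{\sigma+S}\,w_0 \otimes \cdots \otimes w_{N+1}$ separately (never actually needing the $F_M(T)$ hypothesis).
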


\begin{proof}
Pick $ S \in \mathbb{R} $ with $ s + S \in \mathbb{Z}_{> 0} $ to be sufficiently large so that $ T_{(r)} = 0 $ when $ r < - S $. Let $ w_i \in W_i $ be homogeneous with $ \sigma \in s + \mathbb{Z} $ and let $ \mathcal{W}_1 \in F_\sigma^{(c = 0)}(J), \mathcal{W}_2 \in  F_M(T) $ satisfying
\begin{align*}
w_0 \otimes \cdots \otimes w_{\N+1} = \mathcal{W}_1 + \mathcal{W}_2.
\end{align*}
Then $ \sigma + S \in \mathbb{Z} $ and $ \sigma \geq - S $ so $ \sigma + S \in \mathbb{Z}_{\geq 0} $. Then
\begin{align*}
c^{\sigma + S} \mathcal{W}_2 = c^{\sigma + S} w_0 \otimes \cdots \otimes w_{\N+1} - c^{\sigma + S} \mathcal{W}_1 \in T^{(c=0)} + c^{\sigma + S} F_\sigma^{(c = 0)}(J) \subseteq T^{(c=0)}.
\end{align*} 
To show the last inclusion, consider $ f(x_1,\dots,x_{\N}) \widetilde{w}_0 \otimes \cdots \otimes \widetilde{w}_{\N+1} \in F_\sigma^{(c = 0)}(T) $ with $ f(x_1, \dots, x_\N) \in F^{(c=0)}_m(R) $ and $ \Re(\wt \widetilde{w}_0 \otimes \cdots \otimes \widetilde{w}_{\N+1} ) = \widetilde{\sigma} $. Then
\begin{align*}
m - S - \sigma \leq m + \widetilde{\sigma} - \sigma  \leq \sigma - \sigma = 0 
\end{align*}
implies that $ f(x_1,\dots,x_\N) c^{\sigma + S} \in R^{(c=0)} $.
\end{proof}

We now have an extension of Theorem \ref{thm:formal DEs general}. 
\begin{theorem}  \label{thm: DEs c}
Assume $ g $ has finite order. For any change of variables 
\begin{align*}
(\zeta_1, \dots, \zeta_\N) = (z_1, \dots, z_\N)A - (\beta_1, \dots, \beta_\N),
\end{align*}
and $ c = x_i, \ x_i - x_j $, there exist differential equations
\begin{equation}\label{eq:DE with coefficients}
\begin{aligned}
& (c(z_1, \dots, z_\N) )^{m_\el} \left(\frac{\partial}{\partial \zeta_\el} \right)^{m_\el} \psi \\
 &\qquad + \sum_{k = 1}^{m_\el}  d^{(c)}_{k,\el}(z_1, \dots, z_\N) (c(z_1, \dots, z_\N) )^{m_\el-k} \left(\frac{\partial}{\partial \zeta_\el} \right)^{m_\el-k} \psi  = 0, \qquad \el = 1, \dots, \N ,
\end{aligned} 
\end{equation}
for some $ d^{(c)}_{k,\el}(x_1, \dots, x_\N) \in R^{(c=0)}_{(\deg 0)} $ satisfied by 
\begin{align*}
\langle w_0, \mathcal{Y}_1(w_1,z_1) \cdots \mathcal{Y}_\N(w_\N,z_\N) w_{\N+1} \rangle
\end{align*}
in the region $ |z_1| > \dots > | z_\N | $.
\end{theorem}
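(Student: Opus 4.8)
The plan is to mirror the argument for Theorem \ref{thm:formal DEs general}, but with the $c$-adapted filtration $F_\bullet^{(c=0)}$ in place of the weight filtration $F_\bullet$. First I would fix the change of variables and $c$, and note that by Proposition \ref{prop:T^(c=0) = J + F_M(T)} we have $F_r^{(c=0)}(T)\subseteq F_r^{(c=0)}(J)+F_M(T)$, which is the analogue of Proposition \ref{prop:T = J + F_M(T)}. The key point is that the operator $c\,\widehat{L}_{\zeta_\el}$ shifts the $c$-filtration degree by at most a fixed bounded amount: recall $\widehat{L}_{\zeta_\el}$ acts by $L(-1)$ on one tensor factor, which raises weight by $1$, while multiplication by $c$ (an element of $R^{(c=0)}$ of degree $1$) raises the $c$-filtration degree by $1$; so $c\,\widehat{L}_{\zeta_\el}$ preserves the grading $T=\coprod_r T_{(r)}$ up to lower order, exactly in the sense needed to run the Noetherian ascending chain argument inside $T/J$. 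One subtlety is that $c\,\widehat{L}_{\zeta_\el}$ is not quite homogeneous for the $c$-filtration — it is a sum of a degree-preserving piece and strictly lower pieces — so I would instead work with the associated graded, or more directly observe that the chain of $R^{(c=0)}_{(\deg 0)}$-submodules of the appropriate graded piece of $T/J$ generated by $\{[(c\widehat{L}_{\zeta_\el})^k(w_0\otimes\cdots\otimes w_{\N+1})]\}$ stabilizes because $R^{(c=0)}_{(\deg 0)}$ is Noetherian and each graded piece is finitely generated over it.

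Concretely, the steps are: (1) Show $T/J$, with its induced $c$-filtration, has the property that a fixed graded piece (say the one containing the class of $c^{m}\widehat{L}_{\zeta_\el}^{m}(w_0\otimes\cdots\otimes w_{\N+1})$ for all $m$, after normalizing degrees by powers of $c$) is a finitely generated module over the degree-zero subring $R^{(c=0)}_{(\deg 0)}$; this uses Proposition \ref{prop:T^(c=0) = J + F_M(T)} together with the finite generation of $F_M(T)$. (2) Invoke that $R^{(c=0)}_{(\deg 0)}$ is Noetherian (it is finitely generated over $\mathbb{C}$, being a ring of degree-zero rational monomials in $x_i^{\pm 1/t}$, $(x_i-x_j)^{-1}$ when $g$ has finite order), so the ascending chain of submodules generated by $[c^k\widehat{L}_{\zeta_\el}^k(w_0\otimes\cdots\otimes w_{\N+1})]$, $k=0,\dots,j$, stabilizes; this yields a relation
\begin{equation*}
[c^{m_\el}\widehat{L}_{\zeta_\el}^{m_\el}(w_0\otimes\cdots\otimes w_{\N+1})]+\sum_{k=1}^{m_\el} d^{(c)}_{k,\el}(x_1,\dots,x_\N)\,[c^{m_\el-k}\widehat{L}_{\zeta_\el}^{m_\el-k}(w_0\otimes\cdots\otimes w_{\N+1})]=0
\end{equation*}
with $d^{(c)}_{k,\el}\in R^{(c=0)}_{(\deg 0)}$. (3) Apply $\overline{\phi}$ and the $L(-1)$-derivative property, exactly as in the proof of Theorem \ref{thm:formal DEs general}, to translate $\widehat{L}_{\zeta_\el}$ into $\partial/\partial\zeta_\el$ and $c$ into the complex function $c(z_1,\dots,z_\N)$; expanding via $\iota_{|x_1|>\cdots>|x_\N|}$ and specializing $x_i=z_i$ gives the stated equations in the region $|z_1|>\cdots>|z_\N|$.

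I expect the main obstacle to be step (1): verifying that the relevant graded pieces of $T/J$ are finitely generated over $R^{(c=0)}_{(\deg 0)}$, and in particular that the $c$-filtration interacts with the weight filtration well enough that powers of $c$ can be used to "renormalize" $\widehat{L}_{\zeta_\el}^k(w_0\otimes\cdots\otimes w_{\N+1})$ into a single finitely generated graded piece. This requires carefully combining Proposition \ref{prop:T^(c=0) = J + F_M(T)} (which controls $F_r^{(c=0)}(T)$ modulo $J$ by $F_M(T)$) with Lemma \ref{lem:sigma+S} (which says, after multiplying by a suitable power of $c$, the $F_M(T)$-remainder lands back in $T^{(c=0)}$), so that $c^{N}\widehat{L}_{\zeta_\el}^{k}(w_0\otimes\cdots\otimes w_{\N+1})$ lies in $T^{(c=0)}$ for a uniform $N$ and its class in $T^{(c=0)}/(J\cap T^{(c=0)})$ lies in a fixed finitely generated graded component. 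Once this bookkeeping is in place, the Noetherian argument and the translation to differential equations are routine, following the template already established.
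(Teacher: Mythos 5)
Your plan follows essentially the same route as the paper's proof: decompose $\widehat{L}_{\zeta_\el}^k(w_0\otimes\cdots\otimes w_{\N+1})$ via Proposition \ref{prop:T^(c=0) = J + F_M(T)}, use Lemma \ref{lem:sigma+S} to multiply the $F_M(T)$-remainders by $c^{\sigma+k+S}$ (i.e.\ a uniform power of $c$ beyond the $c^k$ paired with $\widehat{L}_{\zeta_\el}^k$) so they land in the finitely generated module $\coprod_{r\le M}T^{(c=0)}_{(r)}$, run a Noetherian ascending-chain argument on these classes, and then apply $\overline{\phi}$ and the $L(-1)$-derivative property. The only minor deviation is that you run the chain over $R^{(c=0)}_{(\deg 0)}$ on graded pieces to obtain degree-zero coefficients directly, whereas the paper runs it over $R^{(c=0)}$ and only afterwards uses the degree grading (for which the generators of $J$ are homogeneous) to choose $d^{(c)}_{k,\el}$ of degree zero; your variant is fine but requires the additional observation that those graded pieces are finitely generated over the Noetherian ring $R^{(c=0)}_{(\deg 0)}$, a point the paper's ordering of the argument avoids.
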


\begin{proof}
Let $ w_i \in W_i $ be homogeneous and $ k \in \mathbb{Z}_{\geq 0} $.  By Proposition \ref{prop:T^(c=0) = J + F_M(T)}, there exist $ \mathcal{W}_1^{(k)} \in F_{\sigma + k}^{(c=0)}(J) $ and $ \mathcal{W}_2^{(k)} \in F_M(T) $ such that
\begin{align*}
\widehat{L}_{\zeta_\el}^k (w_0 \otimes \dots \otimes w_{\N+1}) = \mathcal{W}_1^{(k)}  + \mathcal{W}_2^{(k)} .
\end{align*} 
By Lemma \ref{lem:sigma+S}, there exists $ S \in \mathbb{R} $ such that $ \sigma + S \in \mathbb{Z}_{>0} $ and
\begin{align*}
c^{\sigma + k + S} \mathcal{W}_2^{(k)} \in T^{(c=0)}.
\end{align*}
And so
\begin{align*}
c^{\sigma + k + S} \mathcal{W}_2^{(k)} \in \coprod_{r \leq M} T_{(r)}^{(c=0)}.
\end{align*}
Now we consider the finitely-generated $ R^{(c=0)} $-module $ \coprod_{r \leq M} T_{(r)}^{(c=0)} $. Consider the ascending chain $ (M_j)_{j=0}^\infty $ of submodules $ M_j $ generated by
\begin{align*}
\{c^{\sigma + k + S} \mathcal{W}_2^{(k)} : k \in \mathbb{Z}, 0 \leq k \leq j \}.
\end{align*} 
Since $ R^{(c=0)} $ is Noetherian, there exist $ m_\el \in \mathbb{Z}_{\geq 0} $ and $ d^{(c)}_{1,\el}(x_1, \dots, x_\N), \dots, d^{(c)}_{m_\el,\el}(x_1,\dots,x_\N) \in R^{(c=0)} $ such that
\begin{align*}
c^{\sigma + m_\el + S} \mathcal{W}_2^{(m_\el)} + \sum_{k=0}^{m_\el-1} d^{(c)}_{m_\el-k,\el}(x_1, \dots, x_\N) c^{\sigma + k + S} \mathcal{W}_2^{(k)} = 0.
\end{align*}
Hence,
\begin{align*}
&c^{m_\el}[\widehat{L}_{\zeta_\el}^{m_\el} (w_0 \otimes \dots \otimes w_{\N+1})] \\
&\quad + \sum_{k =1}^{m_\el} d^{(c)}_{k,\el}(x_1, \dots, x_\N) c^{m_\el -k} [\widehat{L}_{\zeta_\el}^{m_\el - k} (w_0 \otimes \dots \otimes w_{\N+1})] = 0 
\end{align*}
When $ T $ is given the grading
\begin{align*}
\degree \left( f(x_1, \dots, x_\N) w_0 \otimes \cdots \otimes w_{\N+1} \right) = - \deg f(x_1, \dots, x_\N) - \wt w_0 + \wt w_1 + \cdots \wt w_{\N+1},
\end{align*}
and the generators for $ J $ are checked to be homogeneous with respect to this grading, we can see that $ d^{(c)}_{k,\el}(x_1, \dots, x_\N) $ can be chosen to be degree zero elements.
\end{proof}

We can immediately  extend the previous proposition to the case where $ c $ is any non-zero element $ \Span_{\mathbb{C}}\{x_1, \dots, x_\N\} $. If $ c $ is a scalar multiple of $ x_i $ or $ x_i - x_j $, the result follows from Theorem \ref{thm: DEs c} by rescaling $ c $. Otherwise, no negative powers of $ c $ occur in $ R $, so we can take $ R^{(c =0)} = R $, and the result follows directly from Theorem \ref{thm:formal DEs general}.

The following special case of the previous proposition will be used to prove convergence.
\begin{corollary}
The series 
\begin{align*}
\langle w_0, \mathcal{Y}_1(w_1,z_1) \cdots \mathcal{Y}_\N(w_\N,z_\N) w_{\N+1} \rangle
\end{align*}
satisfies the following differential equations
\begin{equation}\label{eq:DE for convergence}
\begin{aligned}
\sum_{k = 0}^{m_\el} a_{k,\el}(z_1, \dots, z_\N) \left(z_\el \frac{\partial}{\partial z_\el} \right)^{k} \psi = 0, \qquad \el = 1, \dots, \N ,
\end{aligned} 
\end{equation}
with $ a_{k,\el}(x_1, \dots, x_\N) \in R^{(x_\el = 0)} $ and $ a_{m_\el, \el} = 1 $.
\end{corollary}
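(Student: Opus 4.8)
The plan is to specialize Theorem~\ref{thm: DEs c} to the case $c = x_\el$ together with the trivial change of variables $A = I_\N$, $(\beta_1,\dots,\beta_\N) = (0,\dots,0)$, so that $\zeta_j = z_j$, and then to re-express the resulting differential operator in the Euler-operator basis $\{(z_\el\,\partial/\partial z_\el)^k\}_{k\geq 0}$. Concretely, Theorem~\ref{thm: DEs c} provides, for each $\el = 1,\dots,\N$, an integer $m_\el \geq 0$ and elements $d^{(x_\el)}_{k,\el}(x_1,\dots,x_\N) \in R^{(x_\el = 0)}_{(\deg 0)}$ for $k = 1,\dots,m_\el$ such that $\psi = \langle w_0, \mathcal{Y}_1(w_1,z_1)\cdots\mathcal{Y}_\N(w_\N,z_\N)w_{\N+1}\rangle$ satisfies
\[
z_\el^{\,m_\el}\Bigl(\tfrac{\partial}{\partial z_\el}\Bigr)^{m_\el}\psi + \sum_{k=1}^{m_\el} d^{(x_\el)}_{k,\el}(z_1,\dots,z_\N)\, z_\el^{\,m_\el-k}\Bigl(\tfrac{\partial}{\partial z_\el}\Bigr)^{m_\el-k}\psi = 0
\]
in the region $|z_1| > \cdots > |z_\N|$.

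Next I would invoke the elementary operator identity $z^{\,j}(d/dz)^{\,j} = (z\,d/dz)(z\,d/dz - 1)\cdots(z\,d/dz - j + 1)$, which exhibits $z^j(d/dz)^j$ as a monic polynomial of degree $j$ in $z\,d/dz$ with integer coefficients (the empty product being $1$). Writing $\theta_\el := z_\el\,\partial/\partial z_\el$ and substituting $z_\el^j(\partial/\partial z_\el)^j = \prod_{i=0}^{j-1}(\theta_\el - i)$ into the displayed equation converts it into $\sum_{k=0}^{m_\el} a_{k,\el}(z_1,\dots,z_\N)\,\theta_\el^{\,k}\psi = 0$, where each $a_{k,\el}$ is a $\mathbb{Z}$-linear combination of $1$ and the $d^{(x_\el)}_{j,\el}$, hence lies in the ring $R^{(x_\el = 0)}$. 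The monomial $\theta_\el^{\,m_\el}$ receives a contribution only from the leading term $z_\el^{m_\el}(\partial/\partial z_\el)^{m_\el}$, whose expansion $\prod_{i=0}^{m_\el-1}(\theta_\el - i)$ is monic, while the remaining terms have $\theta_\el$-degree at most $m_\el - 1$; therefore $a_{m_\el,\el} = 1$, giving exactly the asserted system \eqref{eq:DE for convergence}. Since Theorem~\ref{thm: DEs c} already delivers the statement for the evaluated series — the passage from the formal identity to the evaluated one via $\iota_{|x_1|>\cdots>|x_\N|}$ and the $L(-1)$-derivative property being built into its proof — there is nothing further to verify.

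I do not anticipate a genuine obstacle, since all the substantive content (the construction of the submodule $J \subseteq \ker\phi$, the $(x_\el = 0)$-filtration compatibilities, Lemma~\ref{lem:sigma+S}, and the Noetherian ascending-chain argument producing the coefficients $d^{(c)}_{k,\el}$) is already carried out in Theorem~\ref{thm: DEs c}. The only point requiring a little care is the change of operator basis: one must confirm that the top coefficient after rewriting is \emph{exactly} $1$ rather than merely a nonzero scalar, and that passing from $z_\el^j(\partial/\partial z_\el)^j$ to polynomials in $\theta_\el$ does not reintroduce negative powers of $z_\el$ into the coefficients — both of which are immediate from the monicity and integrality of the falling-factorial identity.
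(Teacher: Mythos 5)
Your proposal is correct and matches the paper's intent: the corollary is exactly the special case of Theorem~\ref{thm: DEs c} with $c = x_\el$, the identity change of variables, and a rewriting of $z_\el^{j}(\partial/\partial z_\el)^{j}$ as the monic falling-factorial polynomial in $z_\el\,\partial/\partial z_\el$, which keeps the coefficients in $R^{(x_\el=0)}$ and makes the top coefficient $1$. The paper leaves this basis change implicit here (it performs the same $\{\xi^k\partial^k\}\leftrightarrow\{(\xi\partial)^k\}$ conversion explicitly in the later $\N=2$ regularity argument), so your write-up simply spells out the intended one-step deduction.
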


\begin{theorem}
[The convergence property for products of $ \N $ \gtype \ twisted intertwining operators] Let $ W_0, \dots, W_{\N+1} $, $ \widetilde{W}_0, \dots, \widetilde{W}_\N $, $ \mathcal{Y}_1, \dots, \mathcal{Y}_\N $, $ w_0, \dots, w_{\N+1} $ be as in Section \ref{sec:DEs}. Assume that $ g $ is an automorphism of $ V $ with finite order. 
\begin{itemize}
\item[(1)] The multi-series 
\begin{equation*}
\langle w_0, \mathcal{Y}_1(w_1,z_1) \cdots \mathcal{Y}_\N(w_\N,z_\N) w_{\N+1} \rangle
\end{equation*}
in $ z_1, \dots, z_\N $ is absolutely convergent in the region $ |z_1| > \cdots > |z_\N| > 0 $.
\item[(2)] The sum of
\begin{equation*}
\langle w_0, \mathcal{Y}_1(w_1,z_1) \cdots \mathcal{Y}_\N(w_\N,z_\N) w_{\N+1} \rangle
\end{equation*}
in $ |z_1|> \dots >|z_\N|>0 $ can be analytically continued to a multivalued analytic function 
\begin{equation}
F(\langle w_0, \mathcal{Y}_1(w_1,z_1) \cdots \mathcal{Y}_\N(w_\N,z_\N) w_{\N+1} \rangle)
\end{equation}
defined on all of $ M^\N := \{ (z_1, \dots, z_\N) \in \widehat{\mathbb{C}}^\N : z_i \neq  0, \infty \text{ and } z_i \neq z_j \text{ when } i \neq j  \} $. 
\end{itemize}
 
\end{theorem}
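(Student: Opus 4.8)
The plan is to feed the ordinary differential equations produced above into the theory of ODEs with regular singular points, Theorem~\ref{thm:reg sing ODE}, and then to use the same equations, now regarded at ordinary points, to propagate the solution across $ M^\N $.

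\textbf{Part (1).} I would prove the absolute convergence in $ |z_1| > \cdots > |z_\N| > 0 $ by induction on $ \N $. The base case $ \N = 1 $ follows directly from (\ref{eq:DE for convergence}) for $ \el = 1 $: after the substitution $ x_1 = w^t $, which removes the multivaluedness of $ x_1^{1/t} $, this becomes an equation of the form (\ref{eq:reg sing ODE 3}) in $ w $ with entire coefficients and leading coefficient $ 1 $, whose only finite singularity is the regular singular point $ w = 0 $, so $ \langle w_0, \mathcal{Y}_1(w_1,x_1) w_2 \rangle $ (a formal series in $ x_1 $ and finitely many powers of $ \log x_1 $, with exponents of $ x_1 $ discrete and of real part bounded below by lower truncation and discreteness of the gradings) is a formal solution of regular‑singularity form and hence converges absolutely for all $ z_1 \neq 0 $. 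For the inductive step, fix $ z_1, \dots, z_{\N-1} $ with $ |z_1| > \cdots > |z_{\N-1}| > 0 $ and regard $ \langle w_0, \mathcal{Y}_1(w_1,x_1) \cdots \mathcal{Y}_\N(w_\N,x_\N) w_{\N+1} \rangle |_{x_1 = z_1, \dots, x_{\N-1} = z_{\N-1}} $ as a formal series in $ x_\N $ and $ \log x_\N $. Its coefficient of $ x_\N^{-n-1} (\log x_\N)^k $ is $ \langle w_0, \mathcal{Y}_1(w_1,z_1) \cdots \mathcal{Y}_{\N-1}(w_{\N-1},z_{\N-1}) (\mathcal{Y}_\N)_{n,k}(w_\N) w_{\N+1} \rangle $, and since $ (\mathcal{Y}_\N)_{n,k}(w_\N) w_{\N+1} \in \widetilde{W}_{\N-1} $, applying the inductive hypothesis to the product of the $ \N-1 $ \gtype\ intertwining operators $ \mathcal{Y}_1, \dots, \mathcal{Y}_{\N-1} $ together with the homogeneous components of $ (\mathcal{Y}_\N)_{n,k}(w_\N) w_{\N+1} $ shows each such coefficient converges absolutely in $ |z_1| > \cdots > |z_{\N-1}| > 0 $.

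It then remains to carry out the sum over $ n $ (and the finitely many $ k $). The $ x_\N $-series is a formal solution of (\ref{eq:DE for convergence}) with $ \el = \N $; after the substitution $ x_\N = w^t $ this equation becomes one of the form (\ref{eq:reg sing ODE 3}) in $ w $, whose coefficients are single‑valued holomorphic functions on $ |w| < |z_{\N-1}|^{1/t} $ with leading coefficient $ 1 $, because the only singularities of $ a_{k,\N} $ in $ x_\N $, apart from the branch point at $ x_\N = 0 $ that the substitution resolves, are poles at $ x_\N = z_i $ with $ i < \N $, all of modulus $ \geq |z_{\N-1}| $. Since the $ x_\N $-exponents occurring have real parts bounded below and form a discrete set, the series is of regular‑singularity form, so by Theorem~\ref{thm:reg sing ODE} it converges absolutely for $ 0 < |w| < |z_{\N-1}|^{1/t} $, i.e.\ for $ 0 < |z_\N| < |z_{\N-1}| $. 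Combined with the inductive hypothesis for the coefficients, this is precisely the absolute convergence of the iterated multi-series in $ |z_1| > \cdots > |z_\N| > 0 $.

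\textbf{Part (2).} Let $ F $ denote the multivalued analytic function obtained by summing the series in $ |z_1| > \cdots > |z_\N| > 0 $. By Theorem~\ref{thm:formal DEs general}, applied with the identity change of variables, $ F $ satisfies, for each $ \el = 1, \dots, \N $, the order-$ m_\el $ equation (\ref{eq:DE}) in $ z_\el $, whose coefficients lie in $ R $ and hence, as functions of $ z_\el $, are holomorphic (though multivalued, through the powers $ z_i^{1/t} $) away from $ z_\el \in \{0,\infty\} \cup \{ z_j : j \neq \el \} $, with leading coefficient $ 1 $. As $ M^\N \subset (\mathbb{C}^\times)^\N $ avoids exactly these loci, each of these equations is, near every point of $ M^\N $, a non-singular linear ODE. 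Consequently $ F $ continues analytically along any path in $ M^\N $: such a path can be decomposed into segments along which a single coordinate $ z_\el $ varies (possible since the complement of $ M^\N $ has real codimension $ 2 $), and $ F $ is continued along each segment using (\ref{eq:DE}) in that variable, the continuation being unobstructed because the equation is non-singular and linear there. Since $ M^\N $, being the complement in $ (\mathbb{C}^\times)^\N $ of a finite union of complex hypersurfaces, is connected, $ F $ extends to a multivalued analytic function on all of $ M^\N $, and by the identity theorem the extension continues to satisfy the equations (\ref{eq:DE}).

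\textbf{The main difficulty} is in Part (1): one must verify carefully that the single-variable $ x_\N $-series genuinely has the form of a regular singularity --- that after $ x_\N = w^t $ it is a finite sum $ \sum_j w^{s_j} (\log w)^{m_j} h_j(w) $ with the $ h_j $ formal power series --- so that the converse half of Theorem~\ref{thm:reg sing ODE} applies. This rests on the structural facts that its exponent set has real parts bounded below and that, being a solution of a finite-order regular-singular equation with leading coefficient $ 1 $, its exponents are confined to finitely many cosets of $ \mathbb{Z}_{\geq 0} $ based at the indicial roots. Setting up the induction so that, at every stage, the inductive hypothesis applies verbatim to each coefficient series $ \langle w_0, \mathcal{Y}_1(w_1,z_1) \cdots \mathcal{Y}_{\N-1}(w_{\N-1},z_{\N-1}) (\mathcal{Y}_\N)_{n,k}(w_\N) w_{\N+1} \rangle $ --- which is exactly a product of $ \N-1 $ \gtype\ intertwining operators in the sense of Section~\ref{sec:DEs} --- is the other point that needs care.
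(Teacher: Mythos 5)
Your proposal is correct, and for part (1) it is essentially the paper's proof: induction on $\N$, with the inductive step handled by the ordinary differential equation \eqref{eq:DE for convergence} in the last variable, the substitution $z_\N = w^t$ (the paper writes $\zeta_\N = z_\N^{1/t}$), and Theorem~\ref{thm:reg sing ODE}. The differences are organizational. First, the paper's base case is simpler: for $\N = 1$ the series is a finite sum by the $L(0)$-grading, so no ODE is needed there. Second, for part (2) the paper does not continue along coordinate paths in $M^\N$ using the full system \eqref{eq:DE}; instead the continuation is built into the same induction: by hypothesis the coefficient functions $\langle w_0, \mathcal{Y}_1(w_1,z_1)\cdots\mathcal{Y}_{\N-1}(w_{\N-1},z_{\N-1})(\mathcal{Y}_\N)_{n,k}(w_\N)w_{\N+1}\rangle$ already extend to $M^{\N-1}$, the extended sum still satisfies the $z_\N$-equation there, and one then continues in $z_\N$ alone using the analyticity of the coefficients $a_{k,\N}$ on $M^\N$. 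Your path-continuation argument makes part (2) independent of the induction, at the cost of the (standard, and equally unspoken in the paper) verifications that continuation in one variable depends analytically on the remaining ones and that the other equations persist by the identity theorem; the paper's route needs the analogous analytic parameter dependence of the $z_\N$-ODE solution, so neither version is more demanding in substance. Finally, the subtlety you single out --- that the formal $x_\N$-series must be seen to have the shape $\sum_j w^{s_j}(\log w)^{m_j}h_j(w)$ before the converse half of Theorem~\ref{thm:reg sing ODE} applies --- is passed over silently in the paper, and your sketch (lower truncation and discreteness of the weights give a lowest exponent in each coset, and the indicial equation then confines the exponents to finitely many cosets of $\mathbb{Z}_{\geq 0}$ based at indicial roots) is the right way to fill it.
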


\begin{proof}
When $ \N = 1 $, we have 
\begin{align*}
\langle w_0, \mathcal{Y}_1(w_1,z_1) w_2 \rangle  = \sum_{n \in \mathbb{C}} \sum_{k = 0}^K \langle w_0,  \mathcal{Y}_{n,k}(w_1) w_2 \rangle z_1^{-n-1} (\log z_1)^k.
\end{align*}  
This sum is finite by the $ L(0) $-grading, hence absolutely converges when $ z_1 \in M^1 $. We now proceed by induction. Assume that $ \langle w_0, \mathcal{Y}_1(w_1,z_1) \cdots \mathcal{Y}_{\N-1}(w_{\N-1},z_{\N-1}) \widetilde{w}_{\N-1} \rangle$ converges absolutely in the region $ |z_1| > \cdots > |z_{\N-1}| > 0 $ for all $ \widetilde{w}_{\N-1} \in \widetilde{W}_{\N-1} $. Further assume that the function it converges to can be analytically extended to a multivalued analytic function defined on $ M^{\N-1} $. Now consider the series 
\begin{align}
&\langle w_0, \mathcal{Y}_1(w_1,z_1) \cdots \mathcal{Y}_\N(w_\N,z_\N) w_{\N+1} \rangle \nonumber \\
&= \sum_{n \in \mathbb{C}} \sum_{k = 0}^K \langle w_0, \mathcal{Y}_1(w_1,z_1) \cdots \mathcal{Y}_{\N-1}(w_{\N-1},z_{\N-1}) \mathcal{Y}_{n,k}(w_\N) w_{\N+1} \rangle z_\N^{-n-1} (\log z_\N)^k, \label{eq:correlator sum}
\end{align}  
as a series in $ z_\N $ for fixed $ |z_1| > \cdots > |z_{\N-1}| > 0 $. The \textit{ordinary} differential equation  \eqref{eq:DE for convergence} for $ \el = \N $ after a change of variable $ \zeta_\N = z_\N^{1/t} $ has a regular singularity at $ z_\N = 0 $ and is satisfied by \eqref{eq:correlator sum} for all $ |z_1| > \cdots > |z_{\N-1}| > |z_\N| > 0 $. By Theorem \ref{thm:reg sing ODE}, \eqref{eq:correlator sum} converges absolutely as a series in $ z_\N $. And by the induction hypothesis, \eqref{eq:correlator sum} is absolutely convergent as a multi-series in $ |z_1| > \cdots > |z_{\N-1}| > |z_\N| > 0 $. 
Since the coefficients of \eqref{eq:correlator sum} can be analytically extended to multivalued valued functions on $ M^{\N-1} $, the sum \eqref{eq:correlator sum} satisfies \eqref{eq:DE for convergence} for all $ (z_1, \dots, z_{\N-1}) \in M^{\N-1} $ when $ |z_1|, \dots ,|z_{\N-1}| > |z_\N| > 0 $. Since the coefficients of \eqref{eq:DE for convergence} are analytic on $ M^{\N} $, for fixed $ (z_1, \dots, z_{\N-1}) $ we can analytically extend the sum $ \eqref{eq:correlator sum} $ to all of $ (z_1, \dots, z_\N) \in M^\N $.
\end{proof}

Finally, we will prove that every component-isolated singularity of the four-point correlation function is a regular singularity when the $ L(0) $-actions on the twisted modules are semisimple.

\begin{lemma}\label{lem:L(0) property}
Let $ w_0 \in W_0, \dots, w_3 \in W_3 $ be weight homogeneous and define $ \Delta = \wt w_0 - \wt w_1 - \wt w_2 - \wt w_3 $. Let $ A \in \text{GL}(2, \mathbb{C}) $. Let $ (\xi_1, \xi_2) = (z_1, z_2) A $ be a change of variables. 
Then 
\begin{equation}\label{eq:homogeneous derivative relation}
\left( \xi_1 \frac{\partial}{\partial \xi_1} + \xi_2 \frac{\partial}{\partial \xi_2} - \Delta \right)^K \langle w_0, \mathcal{Y}(w_1, z_1) \mathcal{Y}(w_2, z_2) w_{3} \rangle = 0, \quad \text{for sufficiently large } K >0.
\end{equation}
Furthermore, $ K = 1 $ when the $ L(0) $-actions on $ W_0, \dots, W_3 $ are each semisimple. 
\end{lemma}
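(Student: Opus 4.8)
The plan is to reduce the statement to a fact about the Euler (scaling) operator $z_1\frac{\partial}{\partial z_1}+z_2\frac{\partial}{\partial z_2}$, which is unchanged by the linear substitution $(\xi_1,\xi_2)=(z_1,z_2)A$, and then to compute the action of this operator on the correlation function in terms of $L(0)$.

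First I would record, for a \gtype\ intertwining operator $\mathcal{Y}$, the commutation relation
\begin{equation*}
[L(0),\mathcal{Y}(w_1,x)]=\mathcal{Y}(L(0)w_1,x)+x\frac{d}{dx}\mathcal{Y}(w_1,x).
\end{equation*}
Since $g$ has finite order it is semisimple, so $\mathcal{L}=\mathcal{S}$, and $\mathcal{S}\omega=0$ because $g\omega=\omega$; hence $\bigl(\tfrac{x_2+x_0}{x_1}\bigr)^{\mathcal{L}}\omega=\omega$, and the Jacobi identity \eqref{eq:Y Jacobi} specialized to $u=\omega$ is formally identical to the untwisted one. Taking $\res_{x_0}$ and then extracting the coefficient of $x_1^{-2}$ yields the displayed relation exactly as in the non-twisted theory, using also the $L(-1)$-derivative property $\frac{d}{dx}\mathcal{Y}(w_1,x)=\mathcal{Y}(L(-1)w_1,x)$.

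Next I would combine this with the $L(-1)$-derivative property to compute $z_1\frac{\partial}{\partial z_1}+z_2\frac{\partial}{\partial z_2}$ on $\phi(z_1,z_2):=\langle w_0,\mathcal{Y}(w_1,z_1)\mathcal{Y}(w_2,z_2)w_3\rangle$. Writing $x\,\mathcal{Y}(L(-1)w_i,x)=[L(0),\mathcal{Y}(w_i,x)]-\mathcal{Y}(L(0)w_i,x)$, substituting, and noting that the two inner $L(0)$-contributions cancel while the leftmost $L(0)$ passes to the first slot via the contragredient convention $\langle w_0,L(0)v\rangle=\langle L(0)w_0,v\rangle$, one obtains
\begin{equation*}
\Bigl(z_1\tfrac{\partial}{\partial z_1}+z_2\tfrac{\partial}{\partial z_2}\Bigr)\phi=\Phi\bigl(\mathcal{D}(w_0\otimes w_1\otimes w_2\otimes w_3)\bigr),
\end{equation*}
where $\Phi$ denotes the linear map $w_0\otimes\cdots\otimes w_3\mapsto\phi$ and $\mathcal{D}=L(0)\otimes 1\otimes 1\otimes 1-1\otimes L(0)\otimes 1\otimes 1-1\otimes 1\otimes L(0)\otimes 1-1\otimes 1\otimes 1\otimes L(0)$. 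On the weight-homogeneous vector $w_0\otimes\cdots\otimes w_3$ the operator $\mathcal{D}$ acts as $\Delta+\mathcal{N}$, with $\mathcal{N}$ the corresponding combination of the locally nilpotent parts of the four $L(0)$-actions (these parts commute, since they act on different tensor factors, and annihilate each $w_i$ after finitely many steps because the modules are discretely graded). Iterating gives $\bigl(z_1\tfrac{\partial}{\partial z_1}+z_2\tfrac{\partial}{\partial z_2}-\Delta\bigr)^K\phi=\Phi\bigl(\mathcal{N}^K(w_0\otimes\cdots\otimes w_3)\bigr)$, which vanishes once $K$ exceeds the sum of the nilpotency orders, and already for $K=1$ when every $L(0)$ is semisimple, since then $\mathcal{N}=0$.

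Finally I would invoke the invariance of the Euler vector field under the linear change of variables: from $\xi_i=\sum_j z_j a_{ji}$ one computes $\sum_i\xi_i\frac{\partial}{\partial\xi_i}=\sum_j z_j\frac{\partial}{\partial z_j}$, so $\bigl(\xi_1\frac{\partial}{\partial\xi_1}+\xi_2\frac{\partial}{\partial\xi_2}-\Delta\bigr)^K\phi=\bigl(z_1\frac{\partial}{\partial z_1}+z_2\frac{\partial}{\partial z_2}-\Delta\bigr)^K\phi=0$, which is \eqref{eq:homogeneous derivative relation}. All manipulations are first performed on the formal series in $x_1,x_2$, where $\frac{\partial}{\partial x_i}$ also differentiates $\log x_i$, and then specialized to $z_1,z_2$; this is legitimate because $z\frac{d}{dz}$ acts on $e^{rl_p(z)}$ and $l_p(z)$ exactly as $x\frac{d}{dx}$ acts on $x^r$ and $\log x$. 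I expect the only point requiring care to be the first step — confirming that the twisted Jacobi identity genuinely degenerates to the untwisted shape for $u=\omega$, so that the classical $L(0)$-commutator survives — but this is immediate from $\mathcal{N}=0$ and $\mathcal{S}\omega=0$; the rest is bookkeeping with contragredient conventions.
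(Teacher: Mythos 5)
Your proposal is correct and follows essentially the same route as the paper: use $[L(0),\mathcal{Y}(w,x)]=\mathcal{Y}(L(0)w,x)+x\mathcal{Y}(L(-1)w,x)$ together with the $L(-1)$-derivative property to show that $z_1\frac{\partial}{\partial z_1}+z_2\frac{\partial}{\partial z_2}-\Delta$ inserts the locally nilpotent operators $L(0)-\wt w_i$ into the four slots, then iterate and use the invariance of the Euler operator under the linear change of variables. Your extra step of deriving the $L(0)$-commutator from the Jacobi identity \eqref{eq:Y Jacobi} at $u=\omega$ (using $\mathcal{L}\omega=0$) is a harmless elaboration of what the paper simply quotes as a known property.
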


\begin{proof}
We use the property $ [L(0), \mathcal{Y}(w,x)] = \mathcal{Y}(L(0) w,x) + x  \mathcal{Y}(L(-1) w,x) $ to derive
\begin{align*}
\left( z_1 \frac{\partial}{\partial z_1} + z_2 \frac{\partial}{\partial z_2} - \Delta \right) \langle w_0, \mathcal{Y}(w_1, z_1) \mathcal{Y}(w_2, z_2) w_{3} \rangle &= \langle (L(0) - \wt w_0)w_0, \mathcal{Y}(w_1, z_1) \mathcal{Y}(w_2, z_2) w_{3} \rangle \\
& \quad - \langle w_0, \mathcal{Y}((L(0) - \wt w_1)w_1, z_1) \mathcal{Y}(w_2, z_2) w_{3} \rangle \\
& \quad - \langle w_0, \mathcal{Y}(w_1, z_1) \mathcal{Y}((L(0) - \wt w_2)w_2, z_2) w_{3} \rangle \\
& \quad - \langle w_0, \mathcal{Y}(w_1, z_1) \mathcal{Y}(w_2, z_2) (L(0) - \wt w_3)w_{3} \rangle .
\end{align*}
The right-hand side is zero if the $ L(0) $-actions on $ W_0, \dots, W_3 $ are each semisimple. Otherwise, repeated application gives 
\begin{align*}
\left( z_1 \frac{\partial}{\partial z_1} + z_2 \frac{\partial}{\partial z_2} - \Delta \right)^K \langle w_0, \mathcal{Y}(w_1, z_1) \mathcal{Y}(w_2, z_2) w_{3} \rangle &=0
\end{align*}
whenever $ K $ is sufficiently large. Then we use
\begin{align*}
\xi_1 \frac{\partial}{\partial \xi_1} + \xi_2 \frac{\partial}{\partial \xi_2} &= \xi_1 \left( \frac{\partial z_1}{\partial \xi_1}\frac{\partial}{\partial z_1} + \frac{\partial z_2}{\partial \xi_1}\frac{\partial}{\partial z_2}  \right) + \xi_2 \left( \frac{\partial z_1}{\partial \xi_2}\frac{\partial}{\partial z_1} + \frac{\partial z_2}{\partial \xi_2}\frac{\partial}{\partial z_2}  \right) \\
&= \left( \xi_1 \frac{\partial z_1}{\partial \xi_1} +  \xi_2 \frac{\partial z_1}{\partial \xi_2} \right) \frac{\partial}{\partial z_1} +  \left( \xi_1 \frac{\partial z_2}{\partial \xi_1} +  \xi_2 \frac{\partial z_2}{\partial \xi_2} \right)  \frac{\partial}{\partial z_2} \\
&=  z_1 \frac{\partial}{\partial z_1} + z_2 \frac{\partial}{\partial z_2}. \qedhere
\end{align*}
\end{proof}

\begin{proposition}
Assume that $ g $ has finite order and assume the $ L(0) $-actions on $ W_0, \dots, W_3 $ are each semisimple. Then every component-isolated singularity of $ F( \langle w_0, \mathcal{Y}(w_1, z_1) \mathcal{Y}(w_2, z_2) w_{3} \rangle) $ is a regular singularity. 
\end{proposition}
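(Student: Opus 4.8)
The plan is to exhibit, in coordinates adapted to the prescribed point, a system of differential operators from $\mathcal{D}^{*}$ with single-valued holomorphic coefficients that annihilates the four-point function and has a simple singularity there, and then to quote Theorem \ref{thm:simple sing PDE}. Write $F=F(\langle w_{0},\mathcal{Y}(w_{1},z_{1})\mathcal{Y}(w_{2},z_{2})w_{3}\rangle)$, which is defined on $M^{2}$ by the convergence property established above, and take $U=\mathbb{C}$. First I would fix a component-isolated singularity of $F$ at $(\zeta_{1},\zeta_{2})=(\delta_{1},\delta_{2})\in\{0,\infty\}^{2}$ with respect to an affine change of variables $(\zeta_{1},\zeta_{2})=(z_{1},z_{2})A-(\beta_{1},\beta_{2})$; by definition the transformed function is holomorphic on a punctured polydisc about $(\delta_{1},\delta_{2})$, so there is a genuine multivalued holomorphic solution to argue with. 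Replacing $\zeta_{\el}$ by $\varphi_{\delta_{\el}}(\zeta_{\el})$ I may assume $(\delta_{1},\delta_{2})=(0,0)$, and then I pass to the $t$-fold branched cover $\eta_{\el}^{t}=\zeta_{\el}$ in each variable. The singular locus of $F$ is the union of the lines coming from $x_{1}=0$, $x_{2}=0$, $x_{1}-x_{2}=0$, $x_{1}=\infty$, $x_{2}=\infty$; by the component-isolated hypothesis only the coordinate lines $\{\eta_{1}=0\}$, $\{\eta_{2}=0\}$ can meet a small polydisc around the origin, and at most one of these can come from a finite divisor, since the three finite lines all pass through the $z$-origin. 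Exactly as in Section \ref{sec:kz regularity}, this lets one check that after the branched cover every fractional power $x_{i}^{1/t}$ and every factor $(x_{i}-x_{j})^{-1}$ appearing in $R$ becomes a single-valued holomorphic function of $\eta=(\eta_{1},\eta_{2})$ near the origin.

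Next I would produce one operator for each direction. For a direction $\el$ with $\{\eta_{\el}=0\}$ coming from a finite divisor, $\zeta_{\el}$ is a nonzero scalar multiple of the linear form $c\in\{x_{1},x_{2},x_{1}-x_{2}\}$ cutting it out (so in particular $\zeta_\el$ has no constant term), and I would apply Theorem \ref{thm: DEs c} with this change of variables and this $c$; using $c^{\,j}(\partial/\partial\zeta_{\el})^{j}=(\text{nonzero scalar})\cdot\zeta_{\el}\tfrac{\partial}{\partial\zeta_{\el}}\bigl(\zeta_{\el}\tfrac{\partial}{\partial\zeta_{\el}}-1\bigr)\cdots\bigl(\zeta_{\el}\tfrac{\partial}{\partial\zeta_{\el}}-j+1\bigr)$ together with $\zeta_{\el}\tfrac{\partial}{\partial\zeta_{\el}}=\tfrac{1}{t}\eta_{\el}\tfrac{\partial}{\partial\eta_{\el}}$, the $\el$-th equation of Theorem \ref{thm: DEs c} becomes a nonzero scalar times a monic degree-$m_{\el}$ polynomial in $\theta_{\el}:=\eta_{\el}\tfrac{\partial}{\partial\eta_{\el}}$ with coefficients in $R^{(c=0)}_{(\deg 0)}$, hence holomorphic near $\eta=0$ by the first step; call it $D_{\el}\in\mathcal{D}^{*}$. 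For a direction $\el$ with $\{\eta_{\el}=0\}$ coming from $x_{i}=\infty$, I would instead invoke Lemma \ref{lem:L(0) property} with its matrix taken to be the linear part of the change of variables: since $L(0)$ acts semisimply we have $K=1$, so $F$ satisfies a first-order Euler-type relation, and after the inversion $\varphi_{\infty}$ in the $\el$-th variable and the branched cover this relation becomes an operator $D_{\el}\in\mathcal{D}^{*}$ that is monic of degree $1$ in $\theta_{\el}$ with holomorphic coefficients. (This is the only place the semisimplicity hypothesis is used.) If $\{\eta_{\el}=0\}$ is not part of the singular locus, then $F$ simply extends holomorphically in $\eta_{\el}$ and no equation is needed in that direction.

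When both directions are singular, the operators $D_{1},D_{2}\in\mathcal{D}^{*}$ are each monic in their own $\theta_{\el}$ with holomorphic coefficients, and a division-algorithm argument shows that $\mathcal{D}^{*}$ modulo the left ideal they generate is finitely generated over $\mathcal{H}_{\End U}$ by the monomials $\theta_{1}^{a}\theta_{2}^{b}$ with $a<m_{1}$ and $b<m_{2}$; thus the system has a simple singularity at $\eta=0$, and Theorem \ref{thm:simple sing PDE} shows that $F$ has a regular singularity at $\eta=0$. When only one direction is singular, one applies instead the ordinary-differential-equation theory of Theorem \ref{thm:reg sing ODE} in that variable, with holomorphic dependence on the other; when neither is, $F$ is holomorphic at the point and there is nothing to prove. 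In every case, undoing the branched cover merely divides the exponents and logarithms by $t$, and regrouping the resulting fractional power series by residues modulo $t$ keeps the local expansion in the form required of a regular singularity; undoing $\varphi_{\delta_{\el}}$ then yields a regular singularity at $(\zeta_{1},\zeta_{2})=(\delta_{1},\delta_{2})$. Since this singularity was arbitrary, the proposition follows. I expect the main obstacle to be precisely the case analysis of the middle step: verifying, for each configuration of singular divisors through the prescribed point, that the operators described really are available and have holomorphic coefficients after the branched cover — equivalently, that the component-isolated hypothesis genuinely excludes the degenerate configurations (such as three concurrent singular lines, or a would-be $c$ that is not a coordinate form) — which amounts to carrying over the holomorphy analysis of Section \ref{sec:kz regularity} with care.
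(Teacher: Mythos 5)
Your overall strategy (coordinates adapted to the point, the $t$-th root cover, operators built from Theorem~\ref{thm: DEs c} and the $L(0)$-identity of Lemma~\ref{lem:L(0) property}, then Theorem~\ref{thm:simple sing PDE}) is the same as the paper's, and your treatment of a direction cut out by a finite divisor ($c\propto\zeta_\el$, $\beta_\el=0$, coefficients in $R^{(c=0)}_{(\deg 0)}$) agrees with it. The genuine gap is in how you control the \emph{other} direction. In the paper the semisimplicity hypothesis is not used to supply a standalone first-order equation for the infinity direction; it is used to \emph{transport} the good equation: starting from $\sum_k a^{(2)}_k(\xi_i\partial_{\xi_i})^k\psi=0$ with $c=\xi_i$ and coefficients in $R^{(\xi_i=0)}_{(\deg 0)}$, one substitutes $\xi_i\partial_{\xi_i}=\Delta-\xi_j\partial_{\xi_j}$ (valid since $K=1$) to obtain a \emph{monic degree-$m$ equation purely in} $\xi_j\partial_{\xi_j}$ whose coefficients still lie in $R^{(\xi_i=0)}_{(\deg 0)}$ --- the one ring whose elements are checked (Cases 1--3) to be holomorphic at the prescribed point, including as $\zeta_j\to\infty$. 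Your plan omits this transfer, and it cannot be replaced by adjoining the Euler identity itself in all configurations: written as an element of $\mathcal{D}^*$ it reads $\tfrac{\zeta_j+\beta_j}{\zeta_j}\,\zeta_j\partial_{\zeta_j}+\tfrac{\zeta_i+\beta_i}{\zeta_i}\,\zeta_i\partial_{\zeta_i}-\Delta$, and when $\beta_i\neq 0$ (i.e.\ when the finite direction is \emph{not} cut out by a singular divisor, e.g.\ $\zeta_1=z_1-1$, $\zeta_2=z_2$, $(\delta_1,\delta_2)=(0,\infty)$, which is a legitimate component-isolated singularity) the coefficient of $\zeta_i\partial_{\zeta_i}$ has a pole at $\zeta_i=0$; clearing it destroys monicity in $\theta_j$. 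In exactly this configuration your fallback also fails: the Euler identity is not an ODE in $\eta_j$ alone, Theorem~\ref{thm: DEs c} with $c=\xi_j$ gives coefficients that need not be holomorphic at $\zeta_j=\infty$, and with $c=\xi_i$ the resulting equation in direction $j$ is not monic, so you have neither a usable single-variable equation for Theorem~\ref{thm:reg sing ODE} nor a system with finitely generated quotient for Theorem~\ref{thm:simple sing PDE}. This is precisely where the paper's substitution trick is needed.

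Two secondary points. First, your reduction ``no equation is needed in a non-singular direction / apply Theorem~\ref{thm:reg sing ODE} with holomorphic dependence on the other variable'' is not justified by the theorems you quote: Theorem~\ref{thm:simple sing PDE} requires operators controlling both directions for finite generation, and Theorem~\ref{thm:reg sing ODE} is a one-variable statement, so joint holomorphy of the expansion coefficients (and constancy of the exponents in the parameter) would need a separate argument; the paper avoids this entirely by always producing two equations, whether or not each coordinate hyperplane is genuinely singular, and by handling the affine shift via multiplication by $\bigl(\zeta_j/(\zeta_j+\beta_j)\bigr)^m$, which preserves monicity. Second, your step-one claim that ``every fractional power $x_i^{1/t}$ and every factor $(x_i-x_j)^{-1}$ appearing in $R$ becomes single-valued holomorphic'' is false as stated (negative powers of the degenerating form are not holomorphic, and at an infinity direction one needs the degree-zero property, not just single-valuedness); what is actually needed, and what the paper verifies case by case, is holomorphy of the degree-zero coefficients of the adapted ring $R^{(\xi_i=0)}_{(\deg 0)}$, using that any linear form in $z_1,z_2$ bounded at the point is necessarily proportional to $\xi_i$.
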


\begin{proof}
Let $ \delta_1, \delta_2 \in \{0, \infty \} $ and let $ (\zeta_1, \zeta_2) = (z_1, z_2) A - (\beta_1, \beta_2) $ be a change of variables such that $ (\zeta_1, \zeta_2) = (\delta_1, \delta_2) $ is a component-isolated singularity of a multivalued holomorphic function defined of $ M^2 $. \\

We will use two other change of variables. Define $ (\xi_1, \xi_2) := (\zeta_1 + \beta_1, \zeta_2 + \beta_2) =(z_1, z_2) A $ and define $ (\eta_1, \eta_2) $ such that $ \eta_j^{s(\delta_j)t} = \zeta_j $, with $ s(\delta) = \begin{cases}
1 &\text{if } \delta = 0, \\
-1 &\text{if } \delta = \infty. \\
\end{cases} $

Recall that we have shown the existence of the following differential equations
\begin{equation}\label{eq:DE n=2}
\sum_{k = 0}^{m} a_{k}(z_1,z_2) c^{k} \left( \frac{\partial}{\partial \xi_i} \right)^k \psi = 0, \quad i = 1,2, \quad \text{for all non-zero } c \in \Span_{\mathbb{C}}\{z_1, \dots,z_\N\},
\end{equation}
with $ a_{k} \in R^{(c = 0)}_{(\deg 0)} $ and $ a_{m} = 1 $, satisfied by $ F(\langle w_0, \mathcal{Y}_1(w_1,z_1) \mathcal{Y}_2(w_2,z_2) w_{3} \rangle) $.

We will use $ a_{k}^{(1)}, a_{k}^{(2)}, \dots $ and $ b_{k}^{(1)}, b_{k}^{(2)}, \dots $ for elements in $ R^{(c = 0)}_{(\deg 0)} $ with $ a_{m}^{(1)} = b_{m}^{(1)} = \dots = 1 $. 

For some fixed $ i = 1,2  $, choose $ c = \xi_i $ to obtain.
\begin{equation}
\sum_{k = 0}^{m} a_{k}^{(1)}(z_1,z_2) \xi_i^{k} \left( \frac{\partial}{\partial \xi_i} \right)^k \psi = 0.
\end{equation}

Then we change basis from $ \left\{ \xi_i^{k} \left( \frac{\partial}{\partial \xi_i} \right)^k : k \in  \mathbb{Z}_{\geq 0} \right\} $ to $ \left\{ \left( \xi_i \frac{\partial}{\partial \xi_i} \right)^k : k \in  \mathbb{Z}_{\geq 0} \right\} $, obtaining
\begin{equation} \label{eq:DE in xi_i}
\sum_{k = 0}^{m} a_{k}^{(2)}(z_1,z_2) \left( \xi_i \frac{\partial}{\partial \xi_i} \right)^k \psi = 0.
\end{equation}

We use Lemma \ref{lem:L(0) property} on \eqref{eq:DE in xi_i} to obtain, for $ j = 1,2 $ with $ i \neq j $,
\begin{equation}
\sum_{k = 0}^{m} b_{k}^{(1)}(z_1,z_2) \left( \xi_j \frac{\partial}{\partial \xi_j} \right)^k \psi = 0.
\end{equation}

Then we change basis from $ \left\{ \left( \xi_j \frac{\partial}{\partial \xi_j} \right)^k : k \in  \mathbb{Z}_{\geq 0} \right\} $ to $ \left\{ \xi_j^{k} \left( \frac{\partial}{\partial \xi_j} \right)^k : k \in  \mathbb{Z}_{\geq 0} \right\} $, obtaining
\begin{equation}
\sum_{k = 0}^{m} b_{k}^{(2)}(z_1,z_2) \xi_j^k \left( \frac{\partial}{\partial \xi_j} \right)^k \psi = 0.
\end{equation}

Then we change to $ (\zeta_1,\zeta_2) $ to obtain
\begin{equation}
\sum_{k = 0}^{m} b_{k}^{(2)}(z_1,z_2) (\zeta_j + \beta_j)^k \left( \frac{\partial}{\partial \zeta_j} \right)^k \psi = 0.
\end{equation}

Then we have
\begin{equation}
\sum_{k = 0}^{m} b_{k}^{(2)}(z_1,z_2) \left(\frac{\zeta_j}{\zeta_j + \beta_j} \right)^{m-k} \zeta_j^k \left( \frac{\partial}{\partial \zeta_j} \right)^k \psi = 0.
\end{equation}
Changing basis again gives
\begin{equation}
\sum_{k = 0}^{m} b_{k}^{(3)}(z_1,z_2) \left(\frac{\zeta_j}{\zeta_j + \beta_j} \right)^{m-k}  \left(\zeta_j \frac{\partial}{\partial \zeta_j} \right)^k \psi = 0.
\end{equation}
Then $ \zeta_j \frac{\partial}{\partial \zeta_j} = \frac{1}{s(\delta_j) t}\eta_j \frac{\partial}{\partial \eta_j} $ gives
\begin{equation}
\sum_{k = 0}^{m} b_{k}^{(4)}(z_1,z_2) \left(\frac{\zeta_j}{\zeta_j + \beta_j} \right)^{m-k}  \left(\eta_j \frac{\partial}{\partial \eta_j} \right)^k \psi = 0.
\end{equation}
We can do a similar process on \eqref{eq:DE in xi_i} to change to the variable $ \eta_i $. Hence we have the system of differential equations
\begin{align}
\sum_{k = 0}^{m} a_{k}^{(4)}(z_1,z_2) \left(\frac{\zeta_i}{\zeta_i + \beta_i} \right)^{m-k}  \left(\eta_i \frac{\partial}{\partial \eta_i} \right)^k \psi &= 0, \\
\sum_{k = 0}^{m} b_{k}^{(4)}(z_1,z_2) \left(\frac{\zeta_j}{\zeta_j + \beta_j} \right)^{m-k}  \left(\eta_j \frac{\partial}{\partial \eta_j} \right)^k \psi &= 0.
\end{align}
To show that $ (\eta_1, \eta_2) = (0,0) $ is a regular singularity using Theorem \ref{thm:simple sing PDE}, we are left to check case-by-case that the coefficients are holomorphic in $ (\eta_1, \eta_2) \in \D_{r_1}(0) \times \D_{r_2}(0) $ for some sufficiently small $ r_1, r_2 > 0 $. \\

Case 1. Assume $ \delta_1, \delta_2 = 0 $, $ z_1 \neq 0  $, $ z_2 \neq 0 $ and $ z_1 - z_2 \neq 0 $ when $(\eta_1,\eta_2) = (0,0) $. Then the coefficients are non-singular for all $ (\eta_1, \eta_2) \in \D_{r_1}(0) \times \D_{r_2}(0) $. \\

Case 2. Assume $ \delta_1, \delta_2 = 0 $, and $ z_1 = 0 $ or $ z_2 = 0 $ or $ z_1 - z_2 = 0 $ when $(\eta_1,\eta_2) = (0,0) $. This can happen for only one variable, and this variable must be proportional to $ \zeta_i = \xi_i $ for some $ i = 1,2 $. The functions $ a_{k}^{(4)} $ and $ b_{k}^{(4)} $ are in $ R^{(\xi_i = 0)}_{(\deg 0)} $, so have no singularities when $ \xi_i = 0 $, and the other remaining two variables $ z_1 $, $ z_2 $ or $ z_1 - z_2 $ are non-zero for all $ (\eta_1, \eta_2) \in \D_{r_1}(0) \times \D_{r_2}(0) $. \\

Case 3. Assume $ \delta_1 = \infty $ or $ \delta_2 = \infty $. Only one can have this value, say $ \delta_j = \infty $, and we must have $ \delta_i = 0 $ for $ i \neq j $. The only singularities can come from $ \zeta_i = 0 $ are $ \zeta_j = \infty $. The first kind is immediately handled since $ a_{k}^{(4)} $ and $ b_{k}^{(4)} $ are in $ R^{(\xi_i = 0)}_{(\deg 0)} $. The second kind is handled by the zero degrees. Since the functions $ a_{k}^{(4)} $ and $ b_{k}^{(4)} $ are at most polynomials in $ z_1 z_2^{-1}, z_2 z_1^{-1}, z_1 (z_1 - z_2)^{-1}, z_2 ( z_1 - z_2)^{-1} $, we only need to asses these four monomials. If the numerator and denominator both approach infinity as $ \eta_j \to 0 $, the limit is exists. If the denominator approaches infinity but the numerator does not, the limit is exists. If the numerator approaches infinity but the denominator does not, the denominator is proportional to $ \xi_i $, so this monomial does not occur in $ R^{(\xi_i = 0)}_{(\deg 0)} $. \\

Since we have shown that the coefficients are holomorphic in $ (\eta_1, \eta_2) \in \D_{r_1}(0) \times \D_{r_2}(0) $, we have shown that the singularity $ (\eta_1, \eta_2) = (0,0) $ is indeed regular, hence we know that solutions to this system of differential equations have an expansion of the form
\begin{equation}
\sum_{i = 1}^K \eta_1^{r_i} \eta_2^{s_i} (\log \eta_1)^{j_i} (\log \eta_1)^{k_i} f_i(\eta_1, \eta_2),
\end{equation}
for $ r_i, s_i \in \mathbb{C} $, $ j_i, k_i \in \mathbb{Z}_{\geq 0} $ and $ f_i $ holomorphic in some product of sufficiently small discs centered at 0. Thus, after changing back to $ (\zeta_1, \zeta_2) $, we see that there is regular singularity at $ (\zeta_1, \zeta_2) = (\delta_1, \delta_2) $.
\end{proof}

\begin{proposition}
Assume that $ g $ has finite order. Then every component-isolated singularity of $ F( \langle w_0, \mathcal{Y}(w_1, z_1) \mathcal{Y}(w_2, z_2) w_{3} \rangle) $ with change of coordinates of the form $ (\zeta_1, \zeta_2) = (z_1, z_2)A $ is a regular singularity. 
\end{proposition}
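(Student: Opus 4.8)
The plan is to produce, near a given component-isolated singularity, a system of two partial differential equations in the Euler operators whose associated module is finite, i.e.\ which has a simple singularity, and then invoke Theorem~\ref{thm:simple sing PDE}. The feature of the hypothesis that does the work is that, with no translation term, each new coordinate $\zeta_\el$ lies in $\Span_{\mathbb{C}}\{z_1,z_2\}$, so that Theorem~\ref{thm: DEs c} (and its extension to arbitrary non-zero elements of $\Span_{\mathbb{C}}\{z_1,z_2\}$) can be applied with $c=\zeta_\el$ and yields a differential equation directly in $\zeta_\el\frac{\partial}{\partial\zeta_\el}$, with no auxiliary factors $(\zeta_j/(\zeta_j+\beta_j))^{m-k}$ and --- crucially --- with no need for $L(0)$ to act semisimply, the $L(0)$-relation entering only through Lemma~\ref{lem:L(0) property} in a way that tolerates any finite exponent $K$.

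First I would reduce to the case that $w_0,\dots,w_3$ are weight-homogeneous, using multilinearity of $\langle w_0,\mathcal{Y}(w_1,z_1)\mathcal{Y}(w_2,z_2)w_3\rangle$ in the $w_i$ and the stability of the regular-singularity property under finite sums. Fix a component-isolated singularity at $(\zeta_1,\zeta_2)=(\delta_1,\delta_2)\in\{0,\infty\}^2$ of $\psi := F(\langle w_0,\mathcal{Y}(w_1,z_1)\mathcal{Y}(w_2,z_2)w_3\rangle)$, a function defined on $M^2$. Because there is no translation, a direct check (the $z_i\to 0$ and $z_i\to\infty$ analysis in the proof of the previous proposition) shows that for such a singularity exactly one index has $\delta_\el=\infty$ and the other, say $\delta_i=0$. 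Applying Theorem~\ref{thm: DEs c} with $c=\zeta_i$ (or, when $\zeta_i$ is proportional to one of $z_1,z_2,z_1-z_2$, with $c$ that element) and rewriting the resulting equation via $c\frac{\partial}{\partial c}=\zeta_i\frac{\partial}{\partial\zeta_i}$ yields
\begin{equation*}
\sum_{k=0}^{m} p_k(z_1,z_2)\left(\zeta_i\frac{\partial}{\partial\zeta_i}\right)^{k}\psi=0,\qquad p_m=1,\quad p_k\in R^{(c=0)}_{(\deg 0)}.
\end{equation*}
These $p_k$, having degree $0$ and no pole along $c=0$, are holomorphic on a sufficiently small polydisc $\D_{r_1}(0)\times\D_{r_2}(0)$ in the variables $\eta_1,\eta_2$ (where $\eta_\el^{s(\delta_\el)t}=\zeta_\el$), by the same three-case argument as in the previous proposition --- which is here slightly simpler, since $\beta=0$ removes the factors $(\zeta_j/(\zeta_j+\beta_j))^{m-k}$.

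For the second relation I would use Lemma~\ref{lem:L(0) property}: with no translation $\xi_\el=\zeta_\el$ and $z_1\frac{\partial}{\partial z_1}+z_2\frac{\partial}{\partial z_2}=\zeta_1\frac{\partial}{\partial\zeta_1}+\zeta_2\frac{\partial}{\partial\zeta_2}$, so $\left(\zeta_1\frac{\partial}{\partial\zeta_1}+\zeta_2\frac{\partial}{\partial\zeta_2}-\Delta\right)^{K}\psi=0$ for some $K\geq1$, where $\Delta=\wt w_0-\wt w_1-\wt w_2-\wt w_3$; this holds for any finite $K$, which is the whole reason no semisimplicity of $L(0)$ is needed. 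After substituting $\zeta_\el\frac{\partial}{\partial\zeta_\el}=(s(\delta_\el)t)^{-1}\eta_\el\frac{\partial}{\partial\eta_\el}$, the two relations become a monic degree-$m$ polynomial in $\eta_i\frac{\partial}{\partial\eta_i}$ with coefficients (scalar multiples of) the $p_k$, together with $\widetilde{G}^{K}\psi=0$ where $\widetilde{G}$ is a constant-coefficient linear combination of $1$, $\eta_1\frac{\partial}{\partial\eta_1}$, $\eta_2\frac{\partial}{\partial\eta_2}$ whose $\eta_j\frac{\partial}{\partial\eta_j}$-coefficient ($j\neq i$) is non-zero. Both operators lie in $\mathcal{D}^{*}$ over the polydisc, the coefficients of $\widetilde{G}^{K}$ being constants.

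Finally, for the left ideal $\mathcal{I}$ they generate, $\mathcal{D}^{*}/\mathcal{I}$ is finitely generated over $\mathcal{H}_{\End U}$ with $U=\mathbb{C}$: the monic relation reduces every power of $\eta_i\frac{\partial}{\partial\eta_i}$ to one strictly below $m$, and then $\widetilde{G}^{K}$, having a non-zero constant coefficient on $(\eta_j\frac{\partial}{\partial\eta_j})^{K}$, reduces every power of $\eta_j\frac{\partial}{\partial\eta_j}$ to one strictly below $K$; hence $\mathcal{D}^{*}/\mathcal{I}$ is spanned over $\mathcal{H}_{\End U}$ by the finitely many $\left(\eta_1\frac{\partial}{\partial\eta_1}\right)^{a}\left(\eta_2\frac{\partial}{\partial\eta_2}\right)^{b}$ with $a<m$ and $b<K$. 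So the system has a simple singularity, and Theorem~\ref{thm:simple sing PDE} gives that $\psi$ has a regular singularity at $(\eta_1,\eta_2)=(0,0)$, i.e.\ $F(\langle w_0,\mathcal{Y}(w_1,z_1)\mathcal{Y}(w_2,z_2)w_3\rangle)$ has a regular singularity at $(\zeta_1,\zeta_2)=(\delta_1,\delta_2)$. I expect the finite-generation step to be the main obstacle: since $L(0)$ need not be semisimple one may have $K>1$, and there is then no clean monic relation in $\eta_j\frac{\partial}{\partial\eta_j}$ to be extracted from $\widetilde{G}^{K}$ (as there would be when $K=1$); one must instead argue directly at the level of $\mathcal{I}$, using only the $\zeta_i$-equation to collapse the $\eta_i\frac{\partial}{\partial\eta_i}$-tower and exploiting that $\widetilde{G}^{K}$ has constant coefficients --- hence introduces no new poles --- to collapse the remaining one.
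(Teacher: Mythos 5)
Your proposal is correct and takes essentially the same route as the paper's proof: with no translation, Theorem~\ref{thm: DEs c} applied with $c=\zeta_i$ gives a monic equation in $\zeta_i\frac{\partial}{\partial\zeta_i}$ with coefficients in $R^{(c=0)}_{(\deg 0)}$ that are holomorphic after the substitution $\eta_\el^{s(\delta_\el)t}=\zeta_\el$, and Lemma~\ref{lem:L(0) property} with an arbitrary finite exponent $K$ (no semisimplicity of $L(0)$ needed) supplies the second relation, after which one checks a simple singularity and invokes Theorem~\ref{thm:simple sing PDE}. The only difference is bookkeeping: the paper equalizes $m$ and $K$ and substitutes the monic equation into the expansion of the $L(0)$-relation to display a system whose finite generation over $\mathcal{H}_{\End U}$ is immediate, whereas you keep $\widetilde{G}^{K}$ intact and perform the two-step reduction (collapse the $\eta_i$-powers with the monic equation, then the $\eta_j$-powers using the constant-coefficient $\widetilde{G}^{K}$, re-reducing in $\eta_i$ as needed), which, as your closing caveat correctly anticipates, does go through.
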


\begin{proof}
The proof follows similarly to the proof of the previous proposition. However, to apply Lemma \ref{lem:L(0) property} on \eqref{eq:DE in xi_i}, we choose $ m $ and $ K $ to be sufficiently large enough to be equal. Then we obtain the system
\begin{align}
\sum_{k = 0}^{m} a_{k}(z_1,z_2) \left(\eta_i \frac{\partial}{\partial \eta_i} \right)^k \psi &= 0, \\
\left(\eta_j \frac{\partial}{\partial \eta_j} \right)^m + \sum_{p,q = 0}^{m-1} b_{p,q}(z_1,z_2) \left(\eta_i \frac{\partial}{\partial \eta_i} \right)^p  \left(\eta_j \frac{\partial}{\partial \eta_j} \right)^q \psi &= 0,
\end{align}
for which we can apply Theorem \ref{thm:simple sing PDE}.
\end{proof}

\begin{remark}
Change of coordinates of the form $ (\zeta_1, \zeta_2) = (z_1, z_2)A $ are not as general as the affine change of coordinates. However, they include important component-isolated singularities such as $ (\zeta_1, \zeta_2) = (z_1 - z_2, z_2) = (0,\infty) $. This regular singularity is used to prove associativity.
\end{remark}

\begin{remark}
The previous results do not show that every component-isolated singularity of $ F(\langle w_0, \mathcal{Y}_1(w_1,z_1) \cdots \mathcal{Y}_\N(w_\N,z_\N) w_{\N+1} \rangle) $ is a regular singularity when $ \N > 2 $. However, the $ \N = 2 $ case above can be used to show associativity and commutativity of twisted intertwining operators (see \cite{Reptheoryandorbifoldcft}). Certain (and important) regular singularities in the $ \N > 2 $ case can be shown using the convergence property, associativity and commutativity, and the $ \N = 2 $ case.

For example, consider $ (\zeta_1, \zeta_2, \zeta_3) = (z_1 - z_3, z_2, z_3) $ and $ (\delta_1, \delta_2, \delta_3) = (0,0,\infty) $. Given intertwining operators $ \mathcal{Y}_1 ,\mathcal{Y}_2, \mathcal{Y}_3 $, there exists intertwining operators $ \mathcal{Y}_4, \mathcal{Y}_5 $ such that 
\begin{align*}
&F( \langle w_0, \mathcal{Y}_1(w_1, z_1) \mathcal{Y}_2(w_2, z_2) \mathcal{Y}_3(w_3, z_3) w_4  \rangle ) \\
&\qquad = \sum_{m \in \mathbb{C}} \sum_{k = 0}^{K} F(\langle w_0, (\mathcal{Y}_1)_{m,k}(w_1) \mathcal{Y}_2(w_2, z_2) \mathcal{Y}_3(w_3, z_3) w_4  \rangle )z_1^{-m-1} ( \log z_1)^k \\
&\qquad = \sum_{m \in \mathbb{C}} \sum_{k = 0}^{K} F(\langle w_0, (\mathcal{Y}_1)_{m,k}(w_1) \mathcal{Y}_4(w_3, z_3) \mathcal{Y}_5(w_2, z_2)  w_4  \rangle) z_1^{-m-1} ( \log z_1)^k \\
&\qquad = F( \langle w_0, \mathcal{Y}_1(w_1, z_1) \mathcal{Y}_4(w_3, z_3) \mathcal{Y}_5(w_2, z_2) w_4  \rangle ) \\
&\qquad = \sum_{m \in \mathbb{C}} \sum_{k = 0}^{K} F( \langle w_0, \mathcal{Y}_1(w_1, z_1) \mathcal{Y}_4(w_3, z_3) (\mathcal{Y}_5)_{m,k}(w_2) w_4  \rangle )z_2^{-m-1} ( \log z_2)^k  .
\end{align*}
The last expression gives the desired singularity.
\end{remark}

%\nocite{*}
%\setlength{\bibitemsep}{0.2em}
\printbibliography

\noindent {\small \sc Department of Mathematics, Rutgers University,
110 Frelinghuysen Rd., Piscataway, NJ 08854-8019}

\noindent {\em E-mail address}: dwt24@math.rutgers.edu

\end{document}